% FILE WFT.TEX: FRONT TRACKING NXN
%
\documentclass[a4paper,twoside,notitlepage]{article}
\usepackage{amsthm,amsmath,amscd}
\usepackage{amssymb}
\usepackage{graphicx}
\usepackage{psfrag}
\usepackage{latexsym, epsfig}
\usepackage[dvipsnames]{xcolor}
\usepackage{hyperref}
\usepackage[bbgreekl]{mathbbol}

\usepackage{tikz}
\usepackage{fp}

%\voffset -1truecm
%\setlength{\textwidth}{1.2\textwidth}
%\setlength{\textheight}{1.1\textheight}
%\setlength{\evensidemargin}{0.1\evensidemargin}
%\setlength{\oddsidemargin}{0.8\oddsidemargin}
%
\newfont{\smc}{cmcsc10 at 10pt}
\newfont{\ind}{cmcsc10}
\newfont{\testo}{cmr10 at 10pt}
\newfont{\mail}{cmtt10}
\newfont{\tit}{cmbx10 at 18pt}

\usepackage[margin=1.8cm]{geometry}

\newtheorem{lemma}{Lemma}
\newtheorem{theorem}[lemma]{Theorem}
 
\newtheorem{proposition}[lemma]{Proposition} 
\newtheorem{definition}[lemma]{Definition} 
\newtheorem{remark}[lemma]{Remark}

\newcommand{\rin}{\rm in}
\newcommand{\out}{\rm out}
\newcommand{\norm}[1]{\lVert #1 \rVert}

\newcommand{\dH}{ \delta_{H}}
\newcommand{\dzero}{ \delta_{0}}

\newcommand{\dss}{{\delta}}
\newcommand{\dssb}{\overline{\delta}}

\newcommand{\bel}[1]{\begin{equation}\label{#1}}
\def\bas#1\eas
{\begin{align*}#1\end{align*}}
\newcommand{\eal}{\end{align} }
\def\ba#1\ea
{\begin{align}#1\end{align}}
\newcounter{stepnb}
\newcommand{\firststep}{\setcounter{stepnb}{0}}
\newcommand{\step}[1]{{{\sc \addtocounter{stepnb}{1}\vskip.35\baselineskip\noindent $\circleddash$ Step \arabic{stepnb}:} #1.}} %\vskip.3\baselineskip
\newcommand{\vSC}[2]{{ P_{#1,#2}}}
\newcommand{\vSB}[2]{{ S_{#1,#2}}}
\newcommand{\ftS}[2]{{ S^{FT}_{#1,#2}}}
\newcommand{\ww}{{w}}
\newcommand{\uu}{{ u}}
\newcommand{\vv}{{ v}}

\newcommand{\be}{\begin{equation}}
\newcommand{\ee}{\end{equation}}

\newcommand{\conv}{\mathop {\rm conv}\nolimits}

\newcommand{\conc}{\mathop {\rm conc}\nolimits}
\newcommand{\rarbound}{2\eps}
\newcommand{\elleuno}{\mathbb{L}^{\strut 1}}

\newcommand{\real}{\mathbb{R}}
\newcommand{\R}{\mathbb{R}}
\newcommand{\nat}{\mathbb{N}}
\newcommand{\N}{\mathbb{N}}

\newcommand{\Q}{\mathcal{Q}}

\newcommand{\MC}{\mathcal{M}}
\newcommand{\U}{\mathcal{U}}
\newcommand{\Ci}{\mathcal{C}}

\newcommand{\Z}{\mathbb{Z}}
\newcommand{\OL}{\mathcal{O}}

\newcommand{\I}{\mathcal{I}}
\newcommand{\J}{\mathcal{J}}
\newcommand{\E}{\mathcal{E}}

\newcommand{\NP}{\mathcal{NP}}
\newcommand{\br}{\mathbf{r}}

\newcommand{\eps}{\varepsilon}
\newcommand{\dds}{\frac{\rm d}{{\rm d}s}}

\newcommand{\TV}{\hbox{\testo Tot.Var.}}
\newcommand{\oV}{\overline{V}}
\newcommand{\oT}{\overline{T}}

\newcommand{\SBV}{\mathrm{SBV}}
\newcommand{\BV}{\mathrm{BV}}

\newcommand{\dtau}{\displaystyle{\frac{d}{d\tau}}}
\newcommand{\second}{{\prime\prime}}

\newcommand{\scalar}[2]{{\big\langle #1, #2 \big\rangle}}

\DeclareMathOperator{\Graph}{Graph}

\pagenumbering{arabic}

\pagestyle{myheadings}
\markboth{F. Ancona, L. Caravenna, A. Marson}{On the structure of  solutions for
general hyperbolic systems of balance laws}

\begin{document}
%\setlength{\voffset}{-0.5in}

%%%%%%%%%%%%%%%%%
%
%	TITLE PAGE
%
\title{{\huge On the Structure of BV Entropy Solutions}\\
%\title{{\huge Existence Theory for General Hyperbolic Systems }\\
%\title{{\huge Existence of Front-Tracking Solutions}\\
\vspace{.1truecm}
{\huge  for  Hyperbolic Systems of Balance Laws}
\\
\vspace{.1truecm}
{\huge  with General Flux Function}
%{\huge via Front Tracking and Operator Splitting}
      \vspace{0.1in}
}

\author{
%  \vspace{1cm}
    {\scshape Fabio Ancona, Laura Caravenna, Andrea Marson}
    \thanks{
    Dipartimento di Matematica `Tullio Levi-Civita'
      Via Trieste, 63,
      35121 - Padova, Italy;                       
      \newline 
      e-mail: \texttt{ancona@math.unipd.it, laura.caravenna@unipd.it,
      marson@math.unipd.it}}     \\\today
      }

\date{}
\maketitle
\thispagestyle{empty}
\vspace{-0.10cm}
\begin{abstract}
\noindent
The paper describes the qualitative structure of BV entropy solutions of a general strictly hyperbolic system of balance laws 
with characteristic fields either piecewise genuinely nonlinear or linearly degenerate. In particular, we provide an accurate
description of the local and global wave-front structure of
a BV solution generated by a fractional step scheme combined with a wave-front tracking algorithm.
This extends the corresponding results
in~\cite{BYu} for  strictly hyperbolic systems of conservation laws.
\end{abstract}
\vspace{0.5cm}

2010\textit{\ Mathematical Subject Classification:} 35L45, 35L65

\textit{Key Words:} hyperbolic systems, vanishing viscosity solutions,
front tracking, balance laws.
%\vfill
%\pagebreak
%
%\thispagestyle{empty}
%\null
%\vfill
%\pagebreak
%
%	END TITLE PAGE
%
%\setlength{\voffset}{0in}

\thispagestyle{plain}
%\setcounter{page}{1}
%%%%%%%%%%%%%%%%%%%%%%%%%%%%%%%%%%%%%%
%
%
%	INTRODUCTION
%
%

\tableofcontents
%\vdpt
%\vdpt
\section[Introduction]{Introduction}
\label{sec:int}
\indent

Consider the Cauchy problem for a general hyperbolic system
of $N$ quasilinear first order PDEs in one space dimension
\begin{align}
\label{eq:sysnc}
& u_t+A(u) u_x =g(t,x,u)\,,\\
\noalign{\smallskip}
\label{eq:inda}
& u(0,x) =\overline u(x)\,.
\end{align}
Here the vector $u=u(t,x)=\big(u_1(t,x),\dots, u_N(t,x)\big)$, and
$A=A(u)$ is a smooth matrix-valued function defined on a domain
$\Omega\subseteq\real^N$. Solution to~\eqref{eq:sysnc}-\eqref{eq:inda}
are considered as limits in $L^1_{loc}$ of vanishing viscosity approximations
\be
\label{eq:vv}
u^\eps_t+A(u^\eps) u^\eps_x =g(t,x,u^\eps) + \eps u^\eps_{xx}
\ee
as $\eps\to 0+$. In case $A$ is the jacobian matrix
of a \emph{flux function} $F:\Omega\rightarrow \real^N$, then \eqref{eq:sysnc}
can be written as a system of balance laws, namely
\begin{equation}
\label{eq:sysc}
u_t+F(u)_x =g(t,x,u)\,.
\end{equation}
We assume that
system \eqref{eq:sysnc} is strictly hyperbolic, i.e. that the
matrix $A(u)$ has $N$ real distinct eigenvalues
\be
\lambda_1(u)<\dots<\lambda_N(u)\qquad\forall~u\,,
\label{eq:strhyp}
\ee
and we will denote by
\be
\label{eq:rle}
r_1(u),\ldots, r_N(u)\,, \qquad
l_1(u),\ldots,l_N(u)
\ee
corresponding bases of, respectively, right and left
eigenvectors, normalized so that
\be
\label{eq:norm}
\vert r_k(u)\vert\equiv 1\,,
\qquad
\scalar{r_k(u)}{l_h(u)} = \delta_{kh}\,,
\ee
where $\scalar{\cdot}{\cdot}$ stand for the usual scalar product in $\R^N$,
and $\delta_{hk}$ is the usual Kronecker symbol.
The limits of vanishing viscosity approximations to~\eqref{eq:sysc}-\eqref{eq:inda}
turns out to be distributional solutions which are entropy admissible (see \cite{Ch1}).
We consider the function $g:\real\times\Omega\rightarrow \real^N$
to be continuosly differentiable in the
$u$ variable and measurable in the $x$ one. Moreover, we assume
that
\begin{description}
\item[(G)]\label{Ass:G}
the function $g$ in~\eqref{eq:sysnc} is continuous in $t$ and Lipschitz
continuous w.r.t. $x$ and $u$, uniformly in $t$; moreover, there exists
function $\alpha\in L^1(\R)$ such that $\vert g_x(t,x,u)\vert \leq
\alpha(x)$\label{alpha} for any $t,u$.
\end{description}
Regarding the assumptions
on $g$, since the seminal papers~\cite{DafHsiao, TPLin} and
the first paper~\cite{CP} on the well-posedness
of the Cauchy problem,
many papers appeared in the past years dealing with several
existence results for first order hyperbolic
inhomogeneous systems, both local and global in time, provided the initial
datum $\overline u$ has suitably small total variation.
See Subsection~\ref{subsec:few} below. Moreover, as long as
one is interested in a local (both in time and space) existence result,
the assumption $\vert g_x(t,x,u)\vert \leq \alpha(x)$ with $\alpha\in L^1(\R)$
is not relevant.
\vspace{.1truecm}

Aim of this paper is to provide some preliminaries that will be used
to prove that, for a.e. time $t$ in the interval of existence of a solution $u=u(t,x)$
to~\eqref{eq:sysnc}-\eqref{eq:inda}, $u(t,\cdot)$ enjoys a SBV regularity, i.e.
it is BV and $\partial_x u(t,\cdot)$ does not have a Cantor part. In order
to pursue this result, in the present paper a couple of achievements are presented.
\begin{enumerate}
\item
We briefly introduce a method to construct a piecewise constant
approximate solution for a non conservative
system~\eqref{eq:sysnc}, under the only regularity and strict hyperbolicity
assumptions on the matrix $A$. The algorithm we introduce, that will lead
to an existence result for~\eqref{eq:sysnc}-\eqref{eq:inda} at least
locally in time, follows the
guidelines contained in~\cite{AGG} and~\cite{AG2} for inhomogeneous
systems, and in~\cite{AMfr} for systems without assumptions of genuine
nonlinearity or linear degeneracy on the characteristic fields.
\begin{theorem}
\label{Th:Exloc}
Consider a strictly hyperbolic system of balance laws
\eqref{eq:sysnc}-\eqref{eq:inda}-\eqref{eq:strhyp} and suppose
Assumption (G) at Page~\pageref{Ass:G} holds.
Then there exists $\dssb, T>0$ and a sub-domain $\mathfrak D$ of
$\{\overline v\in L ^{1}(\R;\R^{N})\cap \BV(\R;\R^{N})\ :\ \TV(\overline v)<\dssb\}$
of initial data such that the following hold.
Suppose $u$ is the vanishing viscosity solution~\cite{Ch1} of the
Cauchy problem~\eqref{eq:sysnc}-\eqref{eq:inda} with a given initial datum
$\overline{\uu}\in\mathfrak D $.
We construct a piecewise-constant, fractional step approximation $u_\nu$
of the vanishing viscosity solution $u$ such that $u_\nu(t,\cdot)$
converges to $u(t,\cdot)$ in $L^1(\real)$ for $0\leq t<T$.
\end{theorem}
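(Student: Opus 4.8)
The plan is to build the fractional step approximation $u_\nu$ by interleaving two ingredients: a wave-front tracking scheme for the homogeneous non-conservative system $u_t + A(u)u_x = 0$, and, at a discrete set of sampling times $t_k = k\,\Delta t_\nu$, an Euler-type update that incorporates the source term $g$. The first ingredient requires a Riemann solver for the non-conservative system under the bare assumptions of smoothness and strict hyperbolicity of $A$; following the construction in~\cite{AMfr} (and its inhomogeneous adaptation in~\cite{AGG, AG2}), one decomposes a small jump $(u^-,u^+)$ into $N$ elementary curves associated with the eigenvalues $\lambda_1<\dots<\lambda_N$, where each curve is obtained via the appropriate rarefaction/admissible-shock construction (here discretized into small-amplitude fronts of size $\le\eta_\nu$ travelling at speeds close to the characteristic speeds). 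Between sampling times $u_\nu$ is a genuine front-tracking solution of the homogeneous system; at each $t_k$ one replaces $u_\nu(t_k^-,\cdot)$ by a piecewise-constant function obtained by adding $\Delta t_\nu\, g(t_k, x, u_\nu(t_k^-,x))$ (suitably sampled so the result stays piecewise constant) and then re-solving the local Riemann problems created by this perturbation.

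The core of the argument is the uniform BV bound. First I would introduce a Glimm-type interaction functional $\Upsilon(t) = V(t) + C_0\, Q(t)$ adapted to the non-conservative setting — $V$ the total strength of fronts, $Q$ the Glimm quadratic potential — and show that across front-front interactions in the homogeneous steps $\Upsilon$ is non-increasing up to the usual cubic/quadratic error terms, exactly as in~\cite{AMfr}. The new point is to control the jumps of $\Upsilon$ across the source steps: adding $\Delta t_\nu\, g$ changes the left/right states of finitely many Riemann problems, so the increase of $V$ is bounded by $C\,\Delta t_\nu \sup|g_u|\cdot V + C\,\Delta t_\nu \|\alpha\|_{L^1}$ using Assumption (G) — the first term is handled by a Gronwall argument on $t\mapsto \Upsilon(t)e^{-Ct}$, the second is an $L^1$-in-time contribution that is summable because $\sum_k \Delta t_\nu = T$ is finite. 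Hence there is $\dssb,T>0$ such that for initial data in $\mathfrak D := \{\overline v : \TV(\overline v)<\dssb,\ \text{plus the technical smallness needed by the Riemann solver}\}$ one gets $\TV(u_\nu(t,\cdot))\le C\,\dssb$ uniformly for $0\le t<T$ and all $\nu$, together with an $L^1$ Lipschitz-in-time estimate $\|u_\nu(t,\cdot)-u_\nu(s,\cdot)\|_{L^1}\le L|t-s| + o_\nu(1)$.

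With these a priori bounds, Helly's theorem gives a subsequence $u_{\nu_j}(t,\cdot)$ converging in $L^1_{loc}$ for all $t\in[0,T)$ to some limit $u$; standard arguments (consistency of the front-tracking Riemann solver plus vanishing of the source discretization error as $\Delta t_\nu\to0$) show $u$ is a weak solution of~\eqref{eq:sysnc}–\eqref{eq:inda} satisfying the Lax/Liu admissibility conditions, hence — by uniqueness of the vanishing-viscosity solution established in~\cite{Ch1} for this class — the whole sequence $u_\nu(t,\cdot)$ converges to the vanishing viscosity solution $u$ in $L^1(\R)$ for $0\le t<T$, which is the assertion. The main obstacle, I expect, is the first ingredient rather than the source coupling: because $A$ need not be a Jacobian and the fields are only piecewise genuinely nonlinear or linearly degenerate, one must carefully set up the non-conservative Riemann solver and the measurement of wave strengths so that the interaction estimates close — this is where the delicate work of combining~\cite{AMfr} with~\cite{AGG, AG2} lies, and it forces the somewhat implicit characterization of the admissible domain $\mathfrak D$ and the smallness threshold $\dssb$.
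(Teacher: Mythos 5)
Your construction of $u_\nu$ (homogeneous front tracking on each strip, Euler source update at times $k\tau_\nu$) and your a priori bounds are essentially those of the paper: the control of the Glimm functional across a source step via Assumption (G) and a Gronwall-type iteration is exactly Lemma~\ref{L:timeEst} and the induction in the proof of Theorem~\ref{T:localConv}. The genuine gap is in your final identification step. You extract an $L^1_{loc}$ limit by Helly and then claim that ``standard arguments'' show the limit is a weak solution satisfying Lax/Liu admissibility, whence it coincides with the vanishing viscosity solution ``by uniqueness established in~\cite{Ch1}''. This chain does not close in the present setting: the system is in general non-conservative ($A$ need not be a Jacobian), so a distributional notion of weak solution is not even available, and even in the conservative case with only piecewise genuinely nonlinear fields, Liu admissibility alone is not known to single out a unique solution --- the uniqueness theory (which is in~\cite{Ch2}, not~\cite{Ch1}) characterizes solutions as trajectories of the Lipschitz viscous semigroup through local integral estimates and tame-oscillation--type conditions, which you would still have to verify for your limit. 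As stated, the step ``admissible weak solution $\Rightarrow$ vanishing viscosity solution'' is unsupported, and with it the convergence of the whole sequence.

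The paper avoids this issue by never passing through a weak-solution characterization: it compares the approximation directly with the viscous semigroup $\vSC{t}{h}$ of~\cite{Ch1}. The two ingredients are (i) the Lipschitz continuity~\eqref{E:lipG} of $\vSC{t}{h}$ and (ii) the one-step error estimate~\eqref{E:uniqEst}, which says that a single fractional step (homogeneous front tracking plus the Euler source update) differs from the viscous evolution over $[\,(n-1)\tau_\nu, n\tau_\nu]$ by $\OL(1)(o(\tau_\nu)+\eps_\nu\tau_\nu)$; this in turn rests on the Taylor-type expansion~\eqref{E:estVisc} of $\vSC{h+s}{h}$ in terms of the homogeneous semigroup $\vSB{h+s}{h}$ of~\cite{BB} plus $s\,g$, and on the front-tracking error bound~\eqref{E:ftest1} from~\cite{AMfr}. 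Summing over the $O(T/\tau_\nu)$ steps via the error formula in the spirit of~\cite[Theorem~2.9]{BressanBook} (Theorem~\ref{T:localEst}) gives convergence of the full sequence to $\vSC{t}{0}\overline\uu$ with no compactness or uniqueness-of-weak-solutions argument. To repair your proof you would either have to verify for your Helly limit the local characterization required by~\cite{Ch2}, or replace the compactness step by this semigroup comparison --- which is precisely what the paper does.
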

The precise statement is given in Theorems~\ref{T:localConv}-\ref{T:localEst} below.
We stress that
\begin{itemize}
\item
Our local in time result yields a piecewise constant approximation for two interesting physical models,
see \S~\ref{Ss:convergence}.
This presently was not available.
\item
It is interesting to combine our local-in-time Theorem~\ref{Th:Exloc}, or a vanishing-viscosity local-in-time version of the existence theorem in~\cite{Ch1}, with the works by Dafermos~\cite{Dafdiss1, Dafdiss2, Dafrelax1, Dafrelax2} which allow to pass from local-in-time to global-in-time existence.
\end{itemize}

\item
We define some measures related to the approximate solutions that
eventually will converge weakly
to $\partial_x u(t,\cdot)$. They will be fundamental
for describing the qualitative structure of BV entropy solutions of a general strictly hyperbolic system of balance laws 
with characteristic fields either piecewise genuinely nonlinear or linearly degenerate. 
Details are available in \S~\ref{Ss:structure}
and Theorem~\ref{Th:structure} below, extending the works~\cite{BreLF,BressanBook,BYu} relative to the homogeneous system.
\end{enumerate}

\subsection[Few results available on balance laws]{Few results available on balance laws}
\label{subsec:few}

In the following, we sumarize
a few results, and we refer to the original papers and the references therein
for a complete treatment.
\begin{itemize}
\item
In~\cite{AG2} the authors consider a system in conservation form~\eqref{eq:sysc}
with each chracteristic field $r_k$ genuinely nonlinear or linearly degenerate
in the sense of Lax~\cite{Lax}, and the source term $g$ is assumed
to depend only on $u$, and not on $(t,x)$. They prove a couple of results.
\begin{itemize}
\item
A local existence theorem with the only assumptions that $g\in\Ci^2$  and
$g(0)=0$.
\item
A global existence theorem assuming, besides the above assumptions on $g$,
that the system is \emph{diagonally
dominant}, i.e., denoting by $R(u)$ the matrix whose columns are the vectors
$r_k$, and letting
\be
\label{eq:matrixG}
G\doteq R^{-1}(0) \cdot D_u g(0) \cdot R(0) = (G_{ij})_{i,j=1,\ldots,N}\,,
\ee
the entries $G_{i,j}$ of $G$ satisfy
\be
\label{eq:dd1}
G_{ii}+\sum_{\substack{j=1\\ j\neq i}}^n \vert G_{ij}\vert \leq
-c\qquad \forall i=1,\ldots,n\,,
\ee
for some positive constant $c$.
Moreover, the authors provide a uniqueness result.
\end{itemize}
\item
\cite{AGG} deals again with a system in conservation form with genuinely nonlinear
and linearly degenerate characteristic fields. Moreover, the authors assume the system
to be \emph{non resonant}. i.e. that all the eigenvalues of $A(u)=DF(u)$ be
bounded away from zero, i.e.
\be
\label{eq:nrc}
\vert \lambda_i(u)\vert \geq c>0\qquad
\forall i=1,\ldots,N\,,
\ee
for some $c>0$. Regarding the source term, it does not depend on $t$, so
that $g=g(x,u)$, and, besides
the regularity assumptions ($\Ci^2$ in $u$ and measurable in $x$), the authors
require the existence of a bounded $L^1$ function $\omega=\omega(x)$
such that
\be
\label{eq:diss1}
\vert g(x,u) \vert + \vert D_ug(x,u) \vert \leq \omega(x)
\qquad \forall x\in \R\,, ~\forall u\in\R^N\,.
\ee
Under this assumptions, a global solution to a Cauchy problem for~\eqref{eq:sysc}
is constructed by means of suitable front tracking approximations. Such a solution
is proved to be unique.
\item
In~\cite{Ch1} a global solution to~\eqref{eq:sysnc}-\eqref{eq:inda} is constructed
by means of vanishing viscosity approximations~\eqref{eq:vv}, following the
approach contained in~\cite{BB} for homogeneous systems. No assumptions
are considered on the matrix $A=A(u)$, but the regularity and the strict hyperbolicity.
Instead, the source term $g$ is share the same assumptions in~\cite{AG2},
and hence it enjoys a $\Ci^2$ regularity and it is diagonally dominant in the sense
we explained above. In~\cite{Ch2}
the same author prove that the solution to a Cauchy problem as the one
obtained in~\cite{Ch1} is actually unique.
\item
In~\cite{Dafdiss1} and~\cite{Dafdiss2} two kinds of dissipative assumptions
are considered, to weaken the one contained in~\cite{AG2}.
\begin{itemize}
\item
In~\cite{Dafdiss1}, instead of condition~\eqref{eq:dd1}, it is simply required
that $G_{ii}>0$ for any $i=1,\ldots,N$. In order to obtain a weak entropy admissible
solution to \eqref{eq:sysc}-\eqref{eq:inda} globally defined in time, the authors
assumes that a local solution exists fulfilling the estimate
$$
\int_{\R} u(t,x)\, dx\leq b\int_{\R} \overline{u}(x)\, dx\,,
$$
for some $b>0$ and for any $t\in [0,T[$ where the solution is defined.
\item
In~\cite{Dafdiss2} system~\eqref{eq:sysc} is assumed to be endowed with
a convex antrpy-entropy flux pair $(\eta,q)$, and the matrix
$D^2\eta(0) D_ug(0)$ is required to be positive definite. Then a global solution
to \eqref{eq:sysc}-\eqref{eq:inda} does exists, provided that the intial datum
satisfies
\be
\label{eq:diss2}
\int_{\R} (1+\vert x\vert)^{2s} \vert \overline{u}(x) \vert^2 \,dx < \dss
\ee
for some $s>1$ and a sufficiently small $\dss>0$. Here, a local solution
does exist due to~\cite[Theorem~1]{DafHsiao}, where a random choice
method~\cite{Glimm} is used.
\end{itemize}
We stress that in both~\cite{Dafdiss1} and~\cite{Dafdiss2} no assumption
of genuine nonlinearity or linear degeneracy are made, while the source
term $g$ is assumed not to depend on $x$.
\item
In~\cite{Dafrelax1} and in the survey~\cite{Dafrelax2} the author deals with
BV solution of inhomegenous systems endowed with convex entropy-entropy
pair $(\eta,q)$ satisfying a dissipative condition, and satisfying the
Shizuta-Kawashima condition~\cite{ShiKaw}
\be
\label{eq:shiKaw}
D_u g(0) \, \br_i(0) \neq \mathbf{0}\quad
\forall i=1,\ldots,n\,,
\ee
where $g$ is independent on $t$ and $x$, and $g(0)=0$. Namely, the theorem
is the following
\begin{theorem}[\cite{Dafrelax1,Dafrelax2}]
\label{thm:Daf}
Consider system~\eqref{eq:sysc} with $g$ independent on $(t,x)$, and
assume it is in the form
$$
\begin{cases}
V_t+K(V,W)_x=0\\
\noalign{\smallskip}
W_t+H(V,W)_x=C(V,W)W\,,
\end{cases}
$$
where $u=(V,W)$. Assume that there exists $V_0\in\R$ such that
the $V$-component of the initial datum $\overline{u} =
(\overline{V},\overline{W})$ satisfies
\be
\label{eq:hypinda1}
\int_{\R} \big( \overline{V}(x)-V_0 \big)\, dx = 0\,.
\ee
Moreover, assume that there exist a convex entropy-entropy
pair $(\eta,q)$ and  positive constant $a$
such that the dissipative condition
\be
\label{eq:diss}
D\eta(u) \big[ g(u) - g(V_0,0) \big] \leq
-a \big\vert g(u) - g(V_0,0) \big\vert^2
\ee
holds. Then, there exist $\dzero,\sigma_0,\gamma,b,c_0,c_1>0$ such that, if
\be
\label{eq:hypinda2}
\TV \overline{u}=\dss<\dzero
\quad\text{and}\quad
\int_\R (1+x^2) \big( \big\vert \overline{V}(x)-V_0 \big\vert^2 +
\big\vert \overline{W}(x) \big\vert^2 \big) \, dx =\sigma^2<\sigma_0^2\,,
\ee
then the Cauchy problem \eqref{eq:sysc}-\eqref{eq:inda} has an
admissible $BV$ solution $u=u(t,x)$ on $[0,+\infty[\times\R$, and
\begin{gather}
\label{eq:tvest}
\int_\R \big( \big\vert \overline{V}(x)-V_0 \big\vert +
\big\vert \overline{W}(x) \big\vert \big)\, dx \leq b\sigma\,,
\quad
\TV u(t,\cdot)\leq c_0\sigma+c_1\dss e^{-\gamma t}
\qquad \forall t\geq 0\,,\\
\noalign{\smallskip}
\int_\R \big( \big\vert \overline{V}(x)-V_0 \big\vert +
\big\vert \overline{W}(x) \big\vert \big)\, dx\to 0\,,
\quad
\TV u(t,\cdot)\to 0\qquad \text{as}\quad t\to +\infty\,.
\end{gather}
\end{theorem}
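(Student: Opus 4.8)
The statement is quoted from~\cite{Dafrelax1,Dafrelax2}, so I only sketch here the strategy one would follow to establish it. The plan is to couple the classical $BV$ well-posedness theory for hyperbolic balance laws with the $L^2$ energy / Green's-function decay theory available under the Shizuta--Kawashima condition~\eqref{eq:shiKaw}. First I would invoke a local existence result: since $\TV\overline u=\dss<\dzero$, the Cauchy problem~\eqref{eq:sysc}-\eqref{eq:inda} admits on a short time interval $[0,T_0[$ an admissible $BV$ solution obtained as an $L^1_{loc}$ limit of operator-split Glimm approximations~\cite{Glimm} (a homogeneous Glimm step followed by an ODE step for the source), as in~\cite[Theorem~1]{DafHsiao}; along the way one records the usual Glimm interaction potential $Q$. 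The global existence assertion then reduces to showing that $\TV u(t,\cdot)$ stays uniformly small on the whole interval of existence, so that the local construction can be iterated indefinitely; uniqueness is obtained by the standard $L^1$-stability/Riemann-semigroup argument, or directly from the convex entropy via the relative entropy method.

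The heart of the matter is the a priori estimate~\eqref{eq:tvest}, which I would obtain by running two coupled estimates in parallel. On the one hand, a Glimm-type functional $t\mapsto \mathcal V(u(t,\cdot))+C_0\,Q(u(t,\cdot))$ --- total variation plus a large multiple of the interaction potential --- which is almost nonincreasing across wave interactions; the genuinely new term here, absent in the homogeneous case, is the production of waves by the source at each splitting step, whose cumulative strength is $\lesssim\int_0^t\!\!\int_\R|g(u)-g(V_0,0)|\,dx\,ds$ and must be shown to be small and integrable in time. On the other hand, the dissipative condition~\eqref{eq:diss} yields the entropy inequality $\frac{d}{dt}\int_\R \eta(u(t,x))\,dx \le -a\int_\R |g(u)-g(V_0,0)|^2\,dx$, which after linearization around the equilibrium $(V_0,0)$ controls a full dissipation of $W$, and, through the coupling $C(V,W)W$ together with~\eqref{eq:shiKaw}, of $V-V_0$ as well. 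This is exactly the structure that, combined with strict hyperbolicity~\eqref{eq:strhyp}, produces pointwise Green's-function bounds for the linearized balance law: the equilibrium component of $V$ behaves diffusively, while the remaining modes decay exponentially. Feeding the uniform $BV$ bound into these decay estimates and closing a bootstrap gives $\TV u(t,\cdot)\le c_0\sigma+c_1\dss e^{-\gamma t}$.

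The weighted second-moment smallness in~\eqref{eq:hypinda2} and the zero-mass condition~\eqref{eq:hypinda1} enter precisely to upgrade the energy decay to decay of the $L^1$ norm $\int_\R(|\overline V-V_0|+|\overline W|)\,dx$ and of $\TV u(t,\cdot)$. Condition~\eqref{eq:hypinda1} guarantees that $V-V_0$ has an antiderivative in a suitable weighted space, removing the slowly-decaying constant mode of the diffusive part, while the $(1+x^2)$-weight supplies the extra algebraic decay needed to make the Duhamel iteration converge; interpolating the resulting decay against the uniform $BV$ bound then yields $\TV u(t,\cdot)\to0$ and $\int_\R(|\overline V-V_0|+|\overline W|)\,dx\to0$ as $t\to+\infty$. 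The first bound in~\eqref{eq:tvest}, $\int_\R(|\overline V-V_0|+|\overline W|)\,dx\le b\sigma$, is just Cauchy--Schwarz: writing $|f|=(1+x^2)^{-1/2}\,(1+x^2)^{1/2}|f|$ and using $\int_\R(1+x^2)^{-1}dx=\pi$ gives $\|f\|_{L^1}\le\sqrt\pi\,\|(1+x^2)^{1/2}f\|_{L^2}\lesssim\sigma$.

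The main obstacle is closing the nonlinear a priori estimate: one must reconcile two rather different technologies --- the combinatorial $BV$/Glimm machinery, which is local in wave interactions and naturally controls only $\TV$, and the $L^2$-energy and pointwise Green's-function machinery, which is global, parabolic-flavored, and crucially exploits the Shizuta--Kawashima structure~\eqref{eq:shiKaw}. The delicate points are propagating the dissipation from the relaxed variable $W$ to the full state $u$ through the coupling, absorbing the wave production of the source into the Glimm functional so that it is dominated by the exponential decay rather than causing growth, and making the weighted functionals genuine Lyapunov functionals over the infinite time horizon, so that the loop ``$BV$ bound $\Rightarrow$ decay $\Rightarrow$ improved $BV$ bound'' closes uniformly in $t$. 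Everything else --- local existence, uniqueness, and the elementary Cauchy--Schwarz reductions --- is comparatively routine.
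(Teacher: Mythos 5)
This theorem is not proved in the paper at all: it is quoted verbatim from~\cite{Dafrelax1,Dafrelax2} as background (the paper only uses it as a black box to upgrade the local-in-time existence of Theorem~\ref{Th:Exloc} to global existence for the models of \S~\ref{Ss:convergence}), so there is no internal proof to measure your sketch against; the comparison has to be with Dafermos's own argument in the cited references. Your sketch gets several ingredients right and in the same spirit as those references: local existence by the operator-split random choice scheme of~\cite[Theorem~1]{DafHsiao}, a Glimm functional $\mathcal V+C_0 Q$ whose only new contribution is the wave production of the source, the entropy inequality coming from~\eqref{eq:diss} as the mechanism controlling that production, and the Cauchy--Schwarz reduction of the first bound in~\eqref{eq:tvest} to the weighted $L^2$ hypothesis~\eqref{eq:hypinda2}, which is indeed how the $L^1$ smallness enters.

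The genuine gap is in the step you yourself identify as the heart of the matter: you close the uniform TV bound by invoking Shizuta--Kawashima Green's-function bounds and a Duhamel iteration, ``feeding the uniform $BV$ bound into these decay estimates.'' That machinery is designed for small smooth (Sobolev) perturbations of equilibrium; pointwise Green's-function estimates for the linearization and the associated bootstrap are not available, and are not justified in your sketch, at the mere $BV$ regularity of the solutions in question, so the loop ``$BV$ bound $\Rightarrow$ decay $\Rightarrow$ improved $BV$ bound'' does not close as written. The proofs in~\cite{Dafrelax1,Dafrelax2} never leave the random-choice/$BV$ framework: the dissipative entropy condition~\eqref{eq:diss} integrated over space-time yields a global bound on the relaxation term (essentially a space-time $L^2$ bound of size $O(\sigma^2)$ on $g(u)-g(V_0,0)$), the zero-mass condition~\eqref{eq:hypinda1} and the weight $(1+x^2)$ are used at the level of weighted $L^1$/$L^2$ functionals of the approximate solutions, and the splitting $\TV u(t,\cdot)\leq c_0\sigma+c_1\dss e^{-\gamma t}$ in~\eqref{eq:tvest} comes from separating the exponentially damped waves generated by the initial datum from the waves produced by the source, the latter being controlled by $\sigma$ through the entropy estimate --- not from linearized diffusive/exponential mode decomposition. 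So either you must replace the Green's-function step by a $BV$-compatible damping argument of this type, or supply the (currently missing) justification that pointwise linearized decay can be combined with $BV$ data; as it stands, the central a priori estimate is asserted rather than proved.
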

Again, in~\cite{Dafrelax1,Dafrelax2} local in time solutions are provided
thanks to~\cite[Theorem~1]{DafHsiao}.
\end{itemize}

\subsection{Motivating models and global in time solutions}
\label{Ss:convergence}

Systems that do not satisfy the classical assumptions of genuine nonlinearity
or linear degeneracy in the sense of Lax~\cite{Lax} may arise
in several context. A first example is a
system of balance laws arising in modelling elasticity,
\begin{equation}
\label{eq:elas}
\begin{cases}
v_t-u_x=0\\
\noalign{\smallskip}
u_t-\sigma(v)_x = -\alpha u\,,
\end{cases}
\end{equation}
where the stress $\sigma=\sigma(v)$ satisfies $\sigma'(v)>0$.
Such a system has been diffusively studied (e.g., see~\cite{Daffricdam,DiPerna}). 
Its behavior resembles the $p$-system with dumping~\cite{Daffricdam,DafPan},
but its characteristic fields are not genuinely nonlinear, nor linearly degenerate.
Indeed, the derivatives
of the eigenvalues $\lambda_{1,2} (u,v) = \pm\sqrt{\sigma'(v)}$
of the jacobian matrix of the flux function $f(u,v) = (-u, -\sigma(v))$ along
the corresponding right eigenvectors vanish whenever $\sigma''(v)=0$.
It can be easily seen that the Shizuta-Kawashima condition~\eqref{eq:shiKaw} is fulfilled,
and that
$$
\eta(v,u) = \int \sigma(v)\, dv + \frac{1}{2}u^2
$$
is an entropy satisfying the dissipative condition~\eqref{eq:diss},
where $g(v,u)=(0,-\alpha u)$. Hence, once a local in time solution
to a Cauchy problem for~\eqref{eq:elas} has been provided, it can be
prolonged to a global in time one by
using Theorem~\ref{thm:Daf}.
\vspace{.1truecm}

Another $2\times 2$ system of balance laws not fulfilling the classical
Lax assumptions on characteristic fields is the generalized
Cattaneo's model of heat conduction in high purity
crystals~\cite{RMScimento,RMS,SRM},
\begin{equation}
\label{eq:cattaneo}
\begin{cases}
\rho e_ t +q_x=0\\
\noalign{\smallskip}
(\alpha q)_t + \nu_x  = -\dfrac{\nu'}{k} q\,.
\end{cases}
\end{equation}
In order to check that the assumptions of~Theorem~\ref{thm:Daf}
are fulfilled, we rewrite~\eqref{eq:cattaneo} in terms of the conserved
quantities $e$ and $Q=\alpha q$:
\begin{equation}
\label{eq:cattaneo2}
\begin{cases}
e_ t +\left( \dfrac{Q}{\rho\alpha} \right)_x=0\\
\noalign{\smallskip}
Q_t + \nu_x  = -\dfrac{\nu'}{\alpha k}Q\,.
\end{cases}
\end{equation}
Here,
letting $\vartheta$ be the absolute temperature, we denote with
$q=q(t,x)$ the heat flux, $e=e(\vartheta)$ the internal energy,
$\rho$ the density, which is assumed to be constant, $k=k(\vartheta)$
the heat conductivity,
while $\alpha=\alpha(\vartheta)$ and $\nu=\nu(\vartheta)$ are
constitutive functions. Regarding $e$, $\alpha$, $\nu$, up to a rescaling
of the absolute temperature $\vartheta$, we assume that~\cite{RMScimento}
\be
\label{eq:constrel1}
e(\vartheta) = \vartheta^4\,,\qquad
\alpha(\vartheta) = \dfrac{1}{\rho\vartheta U(\vartheta) \sqrt{e'(\vartheta)}}\,,\qquad
\nu'(\vartheta) = \dfrac{U(\vartheta)}{\vartheta}\sqrt{e'(\vartheta)}\,,
\ee
where $U(\vartheta)>0$ is the so called ``second sound velocity'', i.e.
the velocity of small perturbations propagating into an equilibrium state.
In the case studies contained in~\cite{RMScimento,SRM} $U$ takes the form
\be
\label{eq:constrel2}
U(\vartheta) = \dfrac{1}{\sqrt{A+B\vartheta^n}}\,,
\ee
where the positive constants $A$ and $B$ and the exponent $n$ depend on the
material under observation. Moreover, following~\cite[(21)]{Coleman},
for the heat conductivity $k=k(\vartheta)$ we can deduce the following expression
\be
\label{eq:constrel3}
k(\vartheta) = \sqrt{\dfrac{e'(\vartheta)\nu'(\vartheta)}{\alpha(\vartheta)}}
U(\vartheta)\,.
\ee
A direct computation shows that~\eqref{eq:cattaneo} is piecewise genuinely
nonlinear in the sense of Definition~\ref{D:pwGN} below (see~\cite{RMS,SRM}).
Unfortunately~\eqref{eq:cattaneo} is only weakly diagonally dominant around an
equilibrium state $(\bar{\vartheta},0)$, i.e. the entries of the corresponding matrix
$G$ at~\eqref{eq:matrixG} do not satisfy~\eqref{eq:dd1}, but
$$
G_{ii}<0\,,\qquad G_{ii} + \sum_{j\neq i} \vert G_{i,j}\vert = 0\,.
$$
Theorem~\ref{T:localConv} below states the convergence of the algorithm
described in \S\ref{sec:PCA} to a local in time solution
to a Cauchy problem~\eqref{eq:sysnc}-\eqref{eq:inda}, which a Cauchy
problem for~\eqref{eq:cattaneo} is a particular case of.
In order to extend a local solution to a global one, it can be used
the method described in \cite{Dafrelax1,Dafrelax2}.
Now we need to check that the Shizuta-Kawashima condition~\eqref{eq:shiKaw} and
the entropy dissipation condition~\eqref{eq:diss} are fulfilled.
\begin{itemize}
\item
\emph{Shizuta-Kawashima condition.}
A basis of right eigenvectors of the jacobian matrix of the flux
function
$$
F(e, Q) = \big( Q/\rho\alpha, \, 
\nu \big)
$$
is given by
$$
\begin{aligned}
\br_1(e,Q) &= \left(
-\dfrac{2\sqrt{\vartheta^3\alpha'^2 Q^2+
\rho\alpha^3\nu'}+\alpha' Q}{8\rho\vartheta^3\alpha^2\nu'},
\, 1
\right)\,,\\
\noalign{\medskip}
\br_2(e,Q) &=\left(
\dfrac{2\sqrt{\vartheta^3\alpha'^2 Q^2+
\rho\alpha^3\nu'}-\alpha' Q}{8\rho\vartheta^3\alpha^2\nu'},
\, 1
\right)\,.
\end{aligned}
$$
Let $(e,Q)=(\bar{e},0)$ be an equilibrium of`\eqref{eq:cattaneo2}, corresponding
to a constant temperature $\bar{\vartheta} = \sqrt[4]{\bar{e}}$.
The Shizuta-Kawashima condition~\eqref{eq:shiKaw}
$$
D_{e,Q} (0,-\nu'Q/\alpha k)\bigg\vert_{(e,Q)=(\bar{e},0)}
\br_i(\bar{e},0)\neq \mathbf{0}\quad i=1,2\,,
$$
reduces to
$$
\dfrac{\nu'(\bar{\vartheta})}{\alpha(\bar{\vartheta}) k(\bar{\vartheta})} \neq 0\,.
$$
\item
\emph{Entropy dissipation condition.} An entropy for system~\eqref{eq:cattaneo2}
is given by (see~\cite[(2.11)-(2.12)]{RMScimento})
$$
\eta(e,Q) = -\dfrac{4}{3}\rho e^{3/4} + \dfrac{1}{2\gamma}Q^2\,.
$$
Hence, the entropy dissipation condition~\cite[(2.7)]{Dafrelax2} is written
$$
D_{e,Q}\eta(e,Q) \cdot (0,-\nu'Q/\alpha k) \leq -a \dfrac{\nu'^2}{\alpha^2 k^2} Q^2
$$
for some $a>0$, and $(e,Q)$ in a neighbourhood of the equilibrium point
$(\bar{e},0)$, so that
\be
\label{eq:entrdiss}
\dfrac{1}{\gamma}\geq a \dfrac{\nu'}{\alpha k}
\ee
must holds in a neighbourhood of $(\bar{e},0)$. By using the constitutive
relationships \eqref{eq:constrel1}-\eqref{eq:constrel3}, we get
that~\eqref{eq:entrdiss} holds true once $a\leq 1/\gamma\sqrt{\rho}$.
\end{itemize}
It follows that we can apply~Theorem~\ref{thm:Daf}, and, if an intial datum
for~\eqref{eq:cattaneo2} fulfills assumptions~\eqref{eq:hypinda1}
and~\eqref{eq:hypinda2}, then a global in time solution to~\eqref{eq:cattaneo2}
exists, and it satisfies~\eqref{eq:tvest}. In particular, using the construction
of \S\S~2 and~3, we can provide a piecewise constant approximate solution
to~\eqref{eq:cattaneo2} globally defined in time.

\subsection{Statement on the structure of solutions to balance laws}
\label{Ss:structure}
We describe in this paper the global local structure of solutions of balance laws whose characteristic fields are either piecewise-genuinely nonlinear or linearly degenerate, for Lipschitz continuous sources $g=g(u)$. The theorem extends the works~\cite{BreLF,BressanBook,BYu} relative to the homogeneous system.
One of the expected application is to extend to this setting the $\SBV$ and $\SBV$-like regularity of solutions in a forthcoming paper.

\begin{definition}
\label{D:LD}
The $i$th-characteristic field is \emph{linearly degenerate} if\quad $\nabla\lambda_{i}(u)\cdot r_{i}(u)\equiv0$.
\end{definition}

\begin{definition}
\label{D:pwGN}
The $i$th-characteristic field is \emph{piecewise genuinely nonlinear} if the set 
\[
Z_{i}:=\{u\quad:\quad \nabla\lambda_{i}(u)\cdot r_{i}(u)=0\}
\]
is the union of $(N-1)$-dimensional disjoint manifolds $Z_{i}^{j}$, for $j=1,\dots, J_{i}$, which are transversal to the field $r_{i}(u)$ and such that each $i$-rarefaction curve $R_{i}[u_{0}]$ crosses all the $Z_{i}^{j}$.
\end{definition}

Let $S_{i}[u^{-}](s)$ denote the \textbf{$i$-th Hugoniot curve} issuing from $u^{-}$; we denote by $\sigma_{i}[u^{-}](s)$ the corresponding Rankine-Hugoniot \textbf{speed} of the $i$-th discontinuity $[u^{-},S_{i}[u^{-}](s)]$: $\sigma_{i}$ and $S_{i}$ are defined by by the Implicit Function Theorem by the relation
\[
\sigma_{i}[u^{-}](s)\, \left(S_{i}[u^{-}](s)-u^{-}\right) = f\left(S_{i}[u^{-}](s)\right)-f(u^{-})
\]
together with
\[
S_{i}[u^{-}](0)=u^{-}\ ,
\qquad
\sigma_{i}[u^{-}](0)=\lambda_{i}(u^{-})\ ,
\qquad
\dds S_{i}[u^{-}](0)=r_{i}(0)
\ .
\]
One can suppose that $ S_{i}[u^{-}]$ is parameterized by the $i$-th component relative to the basis $r_1(u),\ldots, r_N(u)$.
If $u^{+}=S_{i}[u^{-}](s)$, we denote also by $\sigma_{i}(u^{-},u^{+})=\sigma_{i}[u^{-}](s)$ the \textbf{speed} of the $i$-th discontinuity $[u^{-},u^{+}]$. This $i$-th discontinuity is \textbf{admissible} when~\cite{tplrpnpn}
\[
\forall 0\leq |\tau|\leq |s| 
\qquad
\sigma_{i}[u^{-}](\tau)\geq\sigma_{i}(u^{-},u^{+})\ .
\]
Finally, if the $i$-th field is piecewise genuinely nonlinear we call~\cite{TPLsimple} that an admissible $i$-jump $[u^{-},u^{+}]$ is called \textbf{simple} if
\[
\forall 0< |\tau|< |s| 
\qquad
\sigma_{i}[u^{-}](\tau)>\sigma_{i}(u^{-},u^{+})
\qquad
u^{+} = S_{i}[u^{-}](s)
\ .
\]
If the admissible jump $[u^{-},u^{+}]$ is not simple, we call it a \textbf{composition of the waves} $[u_{0},u_{1}]$, $[u_{1},u_{2}]$, \dots, $[u_{\ell},u_{\ell+1}]$ if
\[
u_{0}=u^{-}\ , 
\qquad
u_{\ell+1}=u^{+}\ ,
\qquad
u_{k}=S_{i}[u^{-}](s_{k})\ ,
\qquad
\sigma_{i}[u^{-}](s_{k})=\sigma_{i}(u^{-},u^{+})
\]
for all $k=0,\dots,\ell+1$ and for
\[
0=s_{0}<s_{1}<\cdots<s_{\ell}<s_{\ell+1}=s
\quad\text{or}\quad
s=s_{\ell+1}<s_{\ell}<\cdots<s_{1}<s_{0}=0\ .
\]
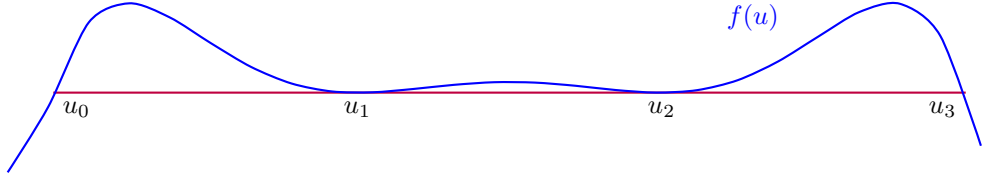
\begin{figure}\centering
\begin{tikzpicture}[xscale=4]
      \draw  (0,0) node[below right] {$u_{0}$};
      \draw  (1,0) node[below ] {$u_{1}$};
      \draw  (2,0) node[below ] {$u_{2}$};
      \draw  (3,0) node[below left] {$u_{3}$};
      \draw[blue]  (2.3,1) node[] {$f(u)$};
      \draw[purple,thick] (0,0) -- (3,0) node[below] {};
      \draw[domain=-0.15:3.05,smooth,variable=\x,thick,blue] plot ({\x},{-\x^6 +9*\x^5 - 31 *\x^4+51*\x^3-40*\x^2+12*\x});
\end{tikzpicture}
\caption{The wave $[u_{0},u_{3}]$ is a composition of the simple waves $[u_{0},u_{1}]$, $[u_{1},u_{2}]$, $[u_{2},u_{3}]$}
\end{figure}

\begin{theorem}[Global structure of solutions]
\label{Th:structure}
Let $u$ be the entropy solution of the Cauchy problem~\eqref{eq:sysnc}-\eqref{eq:inda} under the assumption that each characteristic field is either linearly degenerate or piecewise genuinely nonlinear, and assuming that $\TV(\overline\uu)$ is small enough.
Assume that the source term $g=g(u)$ is Lipschitz continuous.
%We are in the framework of the convergence Theorem~XXX.
Then there exists a countable set $\Theta=\{(t_{\ell},x_{\ell})\ :\ \ell\in\nat\}$ % of interaction points 
and a countable family of Lipschitz continuous curves 
\[
\J=\left\{y_{m} : (a_{m} ,b_{m } )\to\R\ ,\ m\in\nat\right\} \ ,
\]
whose graphs cover points of admissible shocks and points of contact discontinuities, such that $u$ is continuous at least outside $\Theta\cup\Graph(\J)$. Moreover, the following holds.
For each curve $\overline{y}=y_{m}\in\J$ and each fixed $t\in(a_{m } ,b_{m })$ with $(t,\overline{y}(t))\notin\Theta$ denote by 
\[
u^{L}:=u^{L}\left(t,\overline{y}(t)\right):=u\left(t,\overline{y}(t)-\right)\ ,
\qquad
u^{R}:=u^{R}\left(t,\overline{y}(t)\right):=u\left(t,\overline{y}(t)+\right)\ .
\]
Then there exist $i$ and $s$ such that $u^{R}=S_{i}[u^{L}](s)$ with $i\in\{1,\dots,N\}$. Moreover
\begin{itemize}
\item If the $i$-th family is linearly degenerate or if $[u^{L},u^{R}]$ is a simple jump of piecewise genuinely nonlinear family $i$ then entropy conditions hold and
\[
u^{L}=\lim_{\substack{(r,x)\to(t,\overline{y}(t))\\x<\overline{y}(s)}} u(r,x)\ ,
\qquad
u^{R}=\lim_{\substack{(r,x)\to(t,\overline{y}(t))\\x>\overline{y}(s)}} u(r,x) \ ,
\qquad
\dot {\overline{y}}(t)=\sigma_{i}(u^{L},u^{R})\ .
\]
In case the $i$-th family is linearly degenerate it is also possible that $u^{L}=u^{R}$ for $s>0$.

\item If $[u^{L},u^{R}]$ is a composition of the waves $[u_{0},u_{1}]$, $[u_{1},u_{2}]$, \dots, $[u_{\ell},u_{\ell+1}]$ then there exist
\[
\overline{y}_{1},\dots,\overline{y}_{p}\in\J, \qquad\text{where $p\leq\ell+1$,}
\]
depending on $(t,\overline{y}(t))$, such that there exists a neighborhood $\U(t)$ of $t$ for which
\[
\overline{y}_{1}(t)=\dots=\overline{y}_{p}(t)\ ,
\qquad
\dot{\overline{y}}_{1}(t)=\dots=\dot{\overline{y}}_{p}(t)=\sigma(u^{L},u^{R})\ ,
\qquad
\overline{y}_{1}(r)\leq \dots\leq \overline{y}_{p}(r)
\]
for $r\in\U(t)$ and
\[
u^{L}=\lim_{\substack{(r,x)\to(t,\overline{y}(t))\\x<\overline{y}_{1}(s)}} u(r,x)\ ,
\qquad
u^{R}=\lim_{\substack{(r,x)\to(t,\overline{y}(t))\\x>\overline{y}_{p}(s)}} u(r,x) \ .
\]
Finally, one can also require that if $\overline{y}_{j}$ and $\overline{y}_{j+1}$ do not coincide in $\U(t)$, then
\[
u_{j}=\lim_{\substack{(r,x)\to(t,\overline{y}(t))\\\overline{y}_{j}(s)<x<\overline{y}_{j+1}(s)}} u(r,x) \ .
\]
\end{itemize}
\end{theorem}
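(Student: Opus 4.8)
The plan is to read off the structure of $u$ from the piecewise-constant, fractional-step wave-front approximations $u_\nu$ built in \S\ref{sec:PCA} and the subsequent sections, using the uniform a priori estimates established there. Recall that $u_\nu\to u$ in $L^1_{loc}$ (Theorems~\ref{T:localConv}--\ref{T:localEst}), that $\TV(u_\nu(t,\cdot))$ is uniformly bounded, that the total interaction and cancellation of $u_\nu$ is uniformly bounded, and that for every $\rho>0$ the number of fronts of $u_\nu(t,\cdot)$ of strength $>\rho$ is bounded uniformly in $\nu$ and $t$; moreover each front is, up to controlled errors, a simple admissible jump of a single characteristic family, travelling at a speed bounded by some fixed $\Lambda<\infty$ (the characteristic speeds are bounded on the range of $u$, and since $g=g(u)$ with $u$ bounded the source contributes, over the finite existence time, only a bounded perturbation of speeds). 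Hence, for each fixed $\rho=1/k$ and each family $i$, the positions of the $i$-fronts of strength $>1/k$ are finitely many $\Lambda$-Lipschitz functions of $t$; by Ascoli--Arzel\`a, together with a Helly argument for their one-sided limits, they converge along a subsequence to finitely many $\Lambda$-Lipschitz curves. A diagonal procedure over $k\in\nat$ and $i\in\{1,\dots,N\}$ collects all these limit curves into the countable family $\J=\{y_m\}$.

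Next I would fix the exceptional set $\Theta$: the natural choice is the union of the (countably many) atoms of the limit interaction--cancellation measure $\mu$ on the $(t,x)$-plane --- a finite measure obtained as a weak$^*$ limit of the approximate interaction--cancellation measures --- together with the (countably many) points at which two distinct curves of $\J$ cross or a curve of $\J$ originates or terminates, and the (countably many) points where the shock speed along a curve of $\J$ jumps, noting that the left and right traces of $u$ along any Lipschitz curve are $\BV$ in time, hence so is the speed, so that each curve of $\J$ is of class $C^1$ away from $\Theta$. Countability of the crossing points follows because a genuine crossing of two $\Lambda$-Lipschitz curves carrying positive strength forces a definite amount of interaction there, which the finite measure $\mu$ can accommodate only countably often. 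I would then prove the continuity statement: for a.e.\ $t$ the section $u(t,\cdot)$ is $\BV$, so one-sided limits exist everywhere; if $(t_0,x_0)\notin\Graph(\J)$, then for every $k$ no limit front of strength $>1/k$ crosses a small neighbourhood of $(t_0,x_0)$, so the oscillation of $u$ on that neighbourhood is controlled by the limit mass of the \emph{small} fronts crossing it, which tends to $0$ as the neighbourhood shrinks unless that mass concentrates --- and concentration of small-front mass at a point is an atom of $\mu$, hence puts the point in $\Theta$. This gives continuity off $\Theta\cup\Graph(\J)$, and, contrapositively, every admissible-shock point and every contact-discontinuity point lies on $\Graph(\J)$.

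It remains to analyse the traces along a fixed curve $\overline y=y_m\in\J$ at a time $t$ with $(t,\overline y(t))\notin\Theta$. Put $u^L=u(t,\overline y(t)-)$ and $u^R=u(t,\overline y(t)+)$. By the definition of $\Theta$, in a neighbourhood of $t$ the only curves of $\J$ through the point are those coinciding with $\overline y$ at time $t$, so $u^L$ and $u^R$ are genuine planar one-sided limits and $u$ is continuous on each side of the local curve configuration. A hyperbolic (self-similar) rescaling $u^\lambda(\tau,\xi):=u\big(t+\lambda\tau,\ \overline y(t)+\lambda\xi\big)$ solves $u^\lambda_\tau+A(u^\lambda)u^\lambda_\xi=\lambda\,g(u^\lambda)$, and since $g=g(u)$ is Lipschitz with $u$ bounded, $\lambda\,g(u^\lambda)\to 0$; hence every $L^1_{loc}$-limit point of the $u^\lambda$ as $\lambda\to 0$ is a self-similar entropy (vanishing-viscosity) solution of the \emph{homogeneous} strictly hyperbolic system with data $u^L,u^R$ across the line $\xi=\dot{\overline y}(t)\tau$. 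By the Riemann-problem analysis for piecewise genuinely nonlinear and linearly degenerate fields (\cite{AMfr,TPLsimple,tplrpnpn}) and the homogeneous structure results \cite{BYu,BreLF,BressanBook}, such a solution is an elementary $i$-wave pattern for a single family $i$; in particular $u^R=S_i[u^L](s)$ for some $i,s$, satisfying the admissibility inequality of the statement, and the value of $i$ is forced, since a positive limit strength of two distinct families at $(t,\overline y(t))$ would place that point in $\Theta$.

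The dichotomy then follows from the definitions in the excerpt. If the jump $[u^L,u^R]$ is simple, or $i$ is linearly degenerate, the blow-up above is a single $i$-discontinuity; the distributional equation forces $\dot{\overline y}(t)=\sigma_i(u^L,u^R)$ --- the bounded source does not enter the Rankine--Hugoniot relation, which for the non-conservative system is precisely the relation characterising vanishing-viscosity shocks --- and, $\overline y$ being locally the only curve of $\J$, the one-sided traces are the displayed planar limits. If instead $[u^L,u^R]$ is a composition of the simple waves $[u_0,u_1],\dots,[u_\ell,u_{\ell+1}]$, then, up to vanishing errors, the approximate configuration of $u_\nu$ between $u^L$ and $u^R$ consists of $i$-fronts whose strengths converge to those of the pieces and which all travel at the common speed $\sigma_i(u^L,u^R)=\sigma(u^L,u^R)$; passing to the limit these become $p\le\ell+1$ curves $\overline y_1,\dots,\overline y_p\in\J$ through $(t,\overline y(t))$, ordered in a neighbourhood $\U(t)$, all with speed $\sigma(u^L,u^R)$ at $t$, and with $u\to u_j$ between $\overline y_j$ and $\overline y_{j+1}$ whenever these do not coincide in $\U(t)$ --- again because $u$ is continuous off $\Graph(\J)\cup\Theta$ and no other curve lies in between. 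I expect the main obstacle to be exactly this last analysis along the curves, and in particular the composition case: showing that the weak$^*$ limit of the approximate front configuration along $\overline y$ carries no hidden mass, and that the blow-up reproduces \emph{exactly} the elementary wave fan of the Riemann problem $(u^L,u^R)$ with the intermediate states $u_j$ in their prescribed order. This is where the interaction and cancellation bounds, and the reduction to the homogeneous structure theory of \cite{BYu}, do the essential work, the source $g$ being throughout only a harmless bounded perturbation under rescaling.
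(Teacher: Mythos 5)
There is a genuine gap, and it sits at the foundation of your construction: your family $\J$ is assembled only from fronts of $u_{\nu}$ whose strength exceeds a fixed threshold $1/k$, and your continuity argument rests on the claim that concentration of small-front mass at a point produces an atom of the limit interaction--cancellation measure. Both statements fail for linearly degenerate families: a contact discontinuity of the limit $u$ may be approximated by an ever larger number of vanishingly small $i$-fronts travelling together in a strip that shrinks to a single Lipschitz curve, generating no interaction and no cancellation whatsoever. Such a jump point of $u$ would lie neither on your $\J$ (no front of strength $>1/k$ is ever nearby) nor in your $\Theta$ (no atom of $\mu^{IC}$), so the covering statement and the continuity off $\Theta\cup\Graph(\J)$ both break down. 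This is exactly why in \S~\ref{Ss:constructionlimitcurves} the curves for a linearly degenerate field are \emph{not} limits of large fronts but $i$-characteristics issued from the atoms of the measure $\mu^{i*}$ (initial $i$-wave measure plus interaction--cancellation in the forward characteristic strip), with the exclusion of further linearly degenerate jumps off these curves resting on the estimate \cite[(10.76)]{BressanBook}; the threshold construction, via the $(\beta,i,j)$-sub-discontinuities of Definition~\ref{D:subdisc} and the counting Lemma~\ref{L:numFronts}, is reserved for the piecewise genuinely nonlinear fields. Even for those fields the implication ``many small $i$-waves concentrating $\Rightarrow$ atom of $\mu^{IC}$'' is not automatic, since $\nabla\lambda_{i}\cdot r_{i}$ vanishes on the manifolds $Z_{i}^{j}$; it is obtained only through the geometric Lemmas~\ref{L:acc2}--\ref{L:acc0} and the argument of Lemma~\ref{L:samef}.

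The second weak point is the trace analysis along a curve, where your blow-up argument presupposes much of what has to be proven. Declaring that every $L^{1}_{loc}$ limit of the rescalings $u^{\lambda}$ is a \emph{self-similar} solution of the homogeneous Riemann problem joining the planar traces $u^{L}$, $u^{R}$ assumes (i) that the planar one-sided limits exist at all --- pointwise continuity of $u$ off $\Theta\cup\Graph(\J)$ does not by itself give convergence of $u(r,x)$ as $(r,x)$ approaches the point from one side of the curve, and this convergence is precisely the core assertion of the theorem --- and (ii) a uniqueness/self-similarity property of the blow-up which, for general non-conservative systems with piecewise genuinely nonlinear fields, is not available without a substantial argument. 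The paper proves the one-sided limits directly and by contradiction on the approximations: if, say, the left limit failed, space-like segments $P_{\nu}Q_{\nu}$ to the left of the approximate discontinuity would be crossed by a definite amount of waves, and Lemmas~\ref{L:distinctf} and~\ref{L:samef} (the latter using the balance estimates of Lemma~\ref{L:rqgsbb}) convert this into a uniformly positive interaction--cancellation at the point, contradicting that the point is not an atom of $\mu^{IC}$; the speed and entropy conditions are then recovered as in \cite[Step~7, p.~227]{BressanBook}. Likewise, your treatment of the composite case (``up to vanishing errors the configuration consists of $i$-fronts whose strengths converge to those of the pieces'') is exactly what the sub-discontinuity bookkeeping of \S~\ref{S:discontinuities} is designed to justify; without it, fronts splitting at interactions and at the fractional time-steps are not controlled.
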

As in~\cite{BreLF,BressanBook,BYu}, the above theorem is proved by approximation by means of a fine convergence result that will be precisely stated later in~\S~\ref{S:qualitative}.
This is why we work under the hypothesis of the convergence
Theorem~\ref{Th:Exloc}.

%
%\begin{corollary}[Weak* convergence of measures]
%There exists a sequence $\nu_{k}$ such that [WEAK CONVERGENCE OF THE MEASURES WE CONSIDER, TO CHOOSE WHICH ONES]
%\[
%D_{x}^{j} u = \wlim{k}\dots 
%\]
%\end{corollary}

%Possibly, we can consider to insert here also estimates on the measures we introduce. This almost gives the $\SBV$ regularity. This last part however is not in our present planning.

\section{Piecewise constant approximations}
\label{sec:PCA}

In this section we describe the main ingredients in order to construct a piecewise
constant approximation of a solution $u$ to \eqref{eq:sysnc}-\eqref{eq:inda}.

\subsection{The nonconservative Riemann problem}
\label{subsec:RP}

Since we deal with a system that, in general, it is not in conservation form,
we briefly recall the construction of the solution to a Riemann problem in
the homogeneous case, i.e.
\begin{subequations}
\label{eq:RP}
\begin{align}
\label{eq:syshom}
&u_t+A(u)u_x=0&\\
\noalign{\smallskip}
\label{eq:indaRP}
&u(0,x) =\begin{cases}
u^L\quad &\text{if\quad $x<0$\,,}\\
u^R\quad &\text{if\quad $x>0$\,.}
\end{cases}&
\end{align}
\end{subequations}
We refer to~\cite{BB, srp} for the details.

As in the Introduction, we let $A$ be a smooth matrix-valued map,
with eigenvalues given by~\eqref{eq:strhyp}, and right and left
eigenvalues~\eqref{eq:rle}-\eqref{eq:norm}. Since we are interested in solutions
to \eqref{eq:sysnc} with small total variation, it is not
restrictive to assume that there exist
constants $\widehat\lambda_0<\cdots< \widehat\lambda_{N}$ such that
\begin{equation}
\label{eq:hyp}
\widehat\lambda_{k-1}<\lambda_k(u)<\widehat\lambda_k\,,\qquad
\forall~u\,,\quad k=1,\dots,N\,.
\end{equation}
Given any continuous function
$f:I\subset\real\to\real$, and any interval $[a,b]\subset
I$, we will denote the {\it lower convex envelope} and the {\it upper
concave envelope} of~$f$ on~$[a,b]$, respectively, as
\be
\label{eq:conv}
\conv_{[a,b]}f(x)\doteq
\inf\Big\{
\theta f(y)+(1-\theta) f(z): \theta\in[0,1]\,, \ y,z\in[a,b]\,, \
x=\theta y+(1-\theta)z
\Big\}\,,
\ee
and
\be
\label{eq:conc}
\conc_{[a,b]}f(x)\doteq
\sup\Big\{
\theta f(y)+(1-\theta) f(z): \theta\in[0,1]\,, \ y,z\in[a,b]\,, \
x=\theta y+(1-\theta)z
\Big\}\,.
\ee
We will simply write $\conv f,\, \conc f$, whenever
there is no ambiguity on the interval $[a,b]$ taken in consideration.
As usual, in order to contruct a solution to~\eqref{eq:RP}, the
basic step consists in constructing the {\it elementary curve of the
$k$-th family} $(k=1,\dots,N)$ for every given left state $u^L$, which
is a one parameter curve of right states $s\mapsto T_k[u^L](s)$
with the property that the Riemann problem having initial data $(u^L,
u^R)$, $u^R\doteq T_k[u^L](s)$, admits a vanishing viscosity solution
consisting only of waves of the $k$-th characteristic family.
In order to construct such a curve, we look for travelling waves
solutions to the parabolic system
\be
\label{eq:parab}
u_t+A(u)u_x = u_{xx}\,,
\ee
solutions to \eqref{eq:parab}
of the form $u(t,x)=\phi(x-\sigma\,t)$, for some constant $\sigma$.
The profile $\phi$ satisfies the second order ODE
\be
\big(A(\phi)-\sigma\big)\,\phi'=\phi''\,,
\nonumber
\ee
which can be written as a first order 
system of ODEs on the space $\real^N\times\real^N\times\real$:
\be
\label{eq:vtw}
\begin{cases}
\dot u=v\,,
\\
\dot v=\big(A(\phi)-\sigma\big)\,v\,,
\\
\dot\sigma=0\,.
\end{cases}
\ee
Applying the Center Manifold Theorem, we get that in a neighborhood
of a given equilibrium point $(u_0,\,0,\,\lambda_k(u_0))\in
\real^N\times\real\times\real$ for~\eqref{eq:vtw}
there exists an $N+2$-dimensional center manifold $\MC_k$
which is locally invariant under the flow
of~(\ref{eq:vtw}). Introducing the coordinates
$$
v_h\doteq\scalar{l_h(u_0)}{v}\,,\qquad\quad
h=1,\dots,N\,,
$$
of a vector $v\in\real^N$ relative to the basis $r_1(u_0),\dots$
$\dots,r_N(u_0)$, one can parameterize $\MC_k$ in terms of the variables
$u,\,v_k,\,\sigma$, namely
\be
\MC_k=\big\{
(u,v,\sigma)~;~~ v=v_k\, \widetilde r_k (u,v_k,\sigma)
\big\}
\ee
for suitable smooth vector functions $(u,v_k,\sigma) \mapsto\widetilde
r_k (u,v_k,\sigma)$ defined on a neighborhood of $(u_0,0,\lambda_k(u_0))$,
that satisfy
\be
\label{eq2:ri}
\widetilde r_k \big(u_0,0,\sigma\big) = r_k(u_0) \qquad
\forall~\sigma\,,
\ee
and are normalized so that
\be
\label{normrt}
\scalar{l_k(u_0)}{\widetilde r_k (u,v_k,\sigma)} = 1 \qquad
\forall~u\,,\,v_k\,,\,\sigma\,.
\ee
By construction, $\MC_k$ contains all bounded viscous traveling 
profiles with speed close to $\lambda_k(u_0)$.
Thus, we
can rewrite the linearized equations for (\ref{eq:vtw})  at $(u_0,0,\lambda_k(u_0))$
on the manifold $\MC_k$, and obtain
a system on the space $\real^N\times\real\times\real$:
\be
\label{rtp}
\begin{cases}
  u_x = v_k \widetilde r_k (u,v_k,\sigma)\,,\\
  v_{k,x} = v_k \big(\, \widetilde \lambda_k (u,v_k,\sigma) - \sigma\,
  \big)\,,\\
  \sigma_{x} = 0\,,
\end{cases}
\ee
where
\be
  \label{eq2:tlam}
  \widetilde \lambda_k (u,v_k,\sigma) \doteq \scalar{l_k(u)}{A(u)\,
    \widetilde r_k (u,v_k,\sigma)}\,.
\ee
Because of the normalization (\ref{normrt}), the smooth scalar
function $(u,v_k,\sigma) \mapsto\widetilde \lambda_k (u,v_k,\sigma)$
satisfies the identity
\be
\label{eq2:lk}
  \widetilde \lambda_k \big(u_0,v_k,\sigma \big) =
  \lambda_k(u_0)\qquad 
  \forall~v_k\,,\,\sigma\,.
  \ee
Next, given a left state $u^L$ in a neighborhood  of $u_0$ 
and $0<s<<1$, in connection with the
equations~(\ref{rtp}) describing the evolution
of traveling profiles on the manifold $\MC_k$
we associate the integral system 
\begin{equation}
  \label{eq2:Ti}
  \begin{cases}
    u(\tau) = u^L + \displaystyle{\int_0^\tau} \widetilde r_k
    \big( u(\xi), v_k(\xi), \sigma(\xi)
    \big)~d\xi\,,\\
    \noalign{\medskip}
    v_k(\tau) = \widetilde F_k\big(\tau;\,u,v_k,\sigma\big) -
    \conv_{[0,s]} \widetilde F_k\big(\tau;\,u,v_k,\sigma\big)\,,\\
    \noalign{\medskip}
    \sigma(\tau) = \dtau \conv_{[0,s]} \widetilde
    F_k\big(\tau;\,u,v_k,\sigma\big)\,,
  \end{cases}
  \qquad
  0\leq\tau \leq s\,,
\end{equation}
where $\tau\mapsto\widetilde F_k(\tau;\,u,v_k,\sigma)$ is the
{\it``reduced flux function''} associated to (1.13) defined, by
\begin{equation}
  \label{eq2:tFi}
  \widetilde F_k(\tau;\,u,v_k,\sigma) \doteq\int_0^\tau \widetilde \lambda_k
  \big( u(\xi), v_k(\xi), \sigma(\xi)
  \big)~d\xi\,.
\end{equation}
In~\cite{BB} it is shown that, for $s$ sufficiently small, the transformation
defined by the right-hand side of~(\ref{eq2:Ti}) maps a domain of
continuous curves $\tau\mapsto (u(\tau),v_k(\tau),\sigma(\tau))$ into
itself, and is a contraction w.r.t. a suitable weighted norm.  Hence,
for every $u^L$ in a neighborhood $\U_0$ of $u_0$, the transformation
defined by~(\ref{eq2:Ti}) admits a unique fixed point
\bel{E:Rfp}
\tau\mapsto
\big(\overline u(\tau;\,u^L,s),\ \overline v_k(\tau;\,u^L,s),\ \overline \sigma(\tau;\,u^L,s)\big)
\qquad\tau\in[0,s]\,,
\ee
which provides a Lipschitz continuous solution to the integral system (\ref{eq2:Ti}).
The elementary curve of right states of the $k$-th family 
issuing from $u^L$ is then defined as the terminal value at $\tau=s$
of the $u$-component of the solution to the integral system (\ref{eq2:Ti}),
i.e. by setting 
\be
\label{eq:Tdef}
T_k[u^L](s)\doteq u(s;\,u^L,s)\,.
\ee
For the sake of convenience, we denote
\be
\label{eq:sigmaFdef}
\begin{aligned}
\sigma_k[u^L](s,\tau) &\doteq \sigma(\tau;\,u^L,s)\,,\\
\noalign{\smallskip}
\widetilde F_k[u^L](s,\tau) &\doteq \widetilde F_k \big( \tau;
u(\cdot\ ;u^L,s), v_k(\cdot; u^L,s), \sigma(\cdot ; u^L,s) \big)
\end{aligned}
\ee
For negative values $s<0$, $|s|<<1,$ one replaces in (\ref{eq2:Ti})
the lower convex envelope of~$\widetilde F_k$ on the interval $[0,s]$
with its upper concave envelope on $[s,0]$, and then constructs the
curve $T_k[u^L]$ and the map $\sigma_k[u^L]$ exactly in the same way
as above looking at the solution of the integral system (\ref{eq2:Ti})
on the interval $[s,0]$.  In such a way, given any
pair of states~$u^L, u^R$ with $\vert u^L-u_0\vert, \vert u^R-u_0\vert
<<1$, if $u^R=T_k[u^L](s)$, for some wave size~$s$, then the
self-similar solution to the Riemann problem with initial
data~$(u^L,u^R)$, determined by the vanishing viscosity approximation
\eqref{eq:vv} as $\eps\to 0+$, is given by the piecewise continuous
function
\begin{equation}
  \label{eq:solRPi}
  u (t,x) =
  \begin{cases}
    u^L\quad &\text{if\quad $x/t <\sigma_k[u^L](s,\,0)\,,$}\\
    \noalign{\smallskip}
    T_k[u^L] (\tau)\quad &\text{if\quad $x/t = \sigma_k[u^L](s,\,\tau)$
    \quad for some $\tau\in\mathcal{I}$\,,}\\
    \noalign{\smallskip}
    u^R\quad &\text{if\quad $x/t >\sigma_k[u^L] (s,\,s)\,,$}
  \end{cases}
\end{equation}
\begin{remark}
\label{rem2:consform}
{\rm
If the system (\ref{eq:sysnc}) is in conservation form, i.e. in the case
where $A(u)=DF(u)$ for some smooth flux function $F$, the general
solution of the Riemann problem provided by (\ref{eq:solRPi}) is a
composed wave of the $k$-th family containing a countable number of
rarefaction waves and contact-discontinuities or compressive shocks
which satisfy the Liu admissibility condition~\cite{tplrp2p2, tplrpnpn}.
Namely, the regions where the
$v_k$-component of the solution to~(\ref{eq2:Ti}) vanishes correspond
to rarefaction waves if the $\sigma$-component is strictly increasing
and to contact discontinuities if the $\sigma$-component is constant,
while the regions where the $v_k$-component of the solution
to~(\ref{eq2:Ti}) is different from zero correspond to compressive
shocks.
}
\end{remark}
In view of the considerations of Remark~\ref{rem2:consform},
we will extend the standard terminology adopted for the
elementary waves that are present in the solution
of an hyperbolic system of conservation laws 
to  the general case of non conservative systems.
Thus, 
we will say that any (vanishing viscosity) solution of the Riemann
problem for (\ref{eq:sysnc}) of the form (\ref{eq:solRPi}) is a {\it
centered rarefaction wave} of the $k$-th family whenever $u^R\in
R_k[u^L](s)$ for some wave size~$s$ such that $\tau\mapsto
\sigma_k[u^L](s,\tau)$ be strictly increasing on $[0,s]$, $s>0$ (or
strictly decreasing on $[s,0]$ if $s<0$), while we will say that any
(vanishing viscosity) solution of a Riemann problem for (\ref{eq:sysnc}) of
the form
$$
u(t,x)=\begin{cases}
u^L\quad &\text{if\quad $x<\lambda t$\,,}\\
u^R\quad &\text{if\quad $x>\lambda t$\,,}
\end{cases}
$$
is an {\it admissible shock wave} of the $k$-th
family when $u^R=T_k[u^L](s)$ and
$\sigma_k[u^L](s,0)=\sigma_k[u^L](s,s)=\lambda$.
Once we have constructed the elementary curves $T_k$ for each $k$-th
characteristic fa\-mi\-ly, the {\it vanishing viscosity solution} of a
general Riemann problem for (\ref{eq:sysnc}) is then obtained by a standard
procedure observing that the composite mapping
\be
\Phi(s_1,\dots,s_N) [u^L]\doteq T_N\Big[ T_{N-1}\Big[\cdots
    \big[T_1[u^L](s_1)\big]\cdots\Big](s_{N-1})
\Big](s_N)\doteq
u^R\,,
\label{eq2:RP1}
\ee
is one-to-one from a neighborhood of the origin onto a neighborhood
of $u^L$. This is a consequence of the fact that the curves $T_k[u]$
are tangent to $r_k(u)$ at zero $s=0$~\cite{BB, srp}.
Therefore, 
we can uniquely determine intermediate states $u^L\doteq\omega_0,$
$\omega_1,$ $\dots,$ $\omega_N\doteq u^R$, and wave sizes $s_1, \dots,
s_N,$ such that there holds
\begin{equation}
\label{eq2:RP2}
\omega_k =T_k[\omega_{k-1}](s_k)\quad\qquad k=1,\dots,N\,,
\end{equation}
provided that the left and right states $u^L, u^R$ are sufficiently
close to each other. Each Riemann problem 
% $(\omega_{k-1},\,\omega_k)$ 
with initial data 
\be
\label{elemriem}
\overline u_k(x) = 
%u_k(0,\,x) = 
\begin{cases}
\omega_{k-1} &\text{if \ \ $x<0$,}\\
\omega_k &\text{if \ \ $x>0$,}
\end{cases}
%  %\qquad\quad i=1,\dots,N\,.
\ee
%  %
admits a vanishing viscosity solution of {\it total size} $s_k$,
containing a sequence of rarefactions and Liu admissible
discontinuities of the $k$-th family.  Then, because of the uniform
strict hyperbolicity assumption~(\ref{eq:hyp}), the general solution of
the Riemann Problem with initial data $\big(u^L,\,u^R\big)$ is
obtained by piecing together the vanishing viscosity solutions of the
elementary Riemann problems (\ref{eq:sysnc}) (\ref{elemriem}).  Throughout
the paper, with a slight abuse of notation, we shall often call $s$ a
wave of (total) size~$s$, and, if $u^R=T_k[u^L](s)$, we will say that
$(u^L,\,u^R)$ is a wave of size $s$ of the $k$-th characteristic
family.

\subsection{The algorithm}
\label{Ss:frontTr}

Now we briefly describe the algortihm we use in order to construct
a piecewise constant approximate solution to ~\eqref{eq:sysnc}-\eqref{eq:inda}.
First of all let us recall what a front tracking solution to an homogeneous
hyperbolic system is (see~\cite{AMfr} for details).
\begin{definition}
\label{def:ft}
Let $\eps>0$ and an interval $I \subset \R$ be fixed, and let
$A=A(u)$, $u\in\R^N$, be a smooth hyperbolic $N\times N$ matrix.
We say that a continuous map $u:
I\mapsto \elleuno_{loc}(\real;\,\real^N)$, is an
$\eps$-approximate front tracking solution to~\eqref{eq:syshom}
if the following conditions hold:
\begin{enumerate}
%\item
%  %
%  \begin{equation}
%    \label{ineps}
%    \big\Vert u(0,\cdot)-\overline u\,
%    \big\Vert_{\strut\mathbb{L}^1([-1/\eps,\,1/\eps])}
%    <\eps\,.
%  \end{equation}
%  %
\item
  As a function of two variables, $u=u(t,x)$ is piecewise constant
  with discontinuities occurring along finitely many straight lines in
  the $t$-$x$ plane. Jumps can be of two types: elementary wave-fronts
  and non-physical wave-fronts, denoted, respectively, as $\E$ and
  $\NP$.  Only finitely many wave-fronts interactions occur, each
  involving exactly two incoming fronts.
\item
  Along each elementary front $x=x_\alpha(t)$, $\alpha\in \E$, the
  values $u^L\doteq u(t,\,x_\alpha-)$ and $u^R\doteq u(t,\,x_\alpha+)$
  satisfy the following properties.  There exists some wave size
  $s_\alpha$ and some index $k_\alpha\in\{1,\dots,N\}$ such that
  \begin{equation}
    \label{eq3:appphy}
    u^R = T_{k_\alpha} [u^L](s_\alpha)\,.
  \end{equation}
  Moreover, the speed $\dot x_\alpha$ of the wave-front satisfies
  \begin{equation}
    \label{shockspeederr}
    \Big| \dot x_\alpha - \sigma_{k_\alpha}[u^L](s_\alpha,\tau)\Big|
    \leq \rarbound\,,
    \qquad\forall~\tau\in [0,\,s_\alpha]\,.
  \end{equation}
\item
  All non-physical fronts $x=x_\alpha(t)$, $\alpha\in {\mathcal N}$
  have the same speed
  \begin{equation}
    \label{npspeed1}
    \dot x_\alpha\equiv\widehat\lambda\,,
  \end{equation}
  where $\widehat\lambda$ is a fixed constant strictly greater than
  all characteristic speeds, i.e. 
  \begin{equation}
  \label{nps}
  \widehat\lambda>\lambda_k(u)\qquad
  \quad\forall~u\in\Omega,\quad k=1,\dots,N\,.
  \end{equation}
  Moreover, the total strength of all non-physical
  fronts in $u(t,\cdot)$ remains uniformly small, namely one has
  \begin{equation}
    \label{npbound}
    \sum_{\alpha\in\NP}
    \big|u(t,x_\alpha+)-u(t,x_\alpha-)\big|\leq 
    \eps
    \qquad\quad\forall~t\geq 0\,.
  \end{equation}
\end{enumerate}
\end{definition}
In order to construct piecewise constant approximations
to~\eqref{eq:sysnc}-\eqref{eq:inda}, we follow the approach of~\cite{CP}, and
construct a local solution to~\eqref{eq:sysnc}-\eqref{eq:inda} by means
of a fractional step algorithm combined with a front tracking method.
In order to do this we assume that assumption \textbf{(G)} at~\pageref{Ass:G}
holds.
Hence, once two sequences
\[
\{ \tau_{\nu} \}_{\nu\in\nat},\ 
\{ \eps_{\nu} \}_{\nu\in\nat}  \ ,
\qquad
0<\tau_{\nu}\leq \eps_\nu \downarrow 0 \ ,
\]
are given, we fix $\nu\in\nat$ and we proceed in this way
in order to construct and $\eps_\nu$-approximate fractional-step
approximation $u_\nu=u_{\eps_{\nu}}$ of the solution.
Fist of all, we approximate the initial datum $\overline{u}$ by means of a piecewise
constant function $\overline{u}_{\nu}$ such that
$$
\TV \overline{u}_{\nu} \leq \TV \overline{u}\,, \qquad
\Vert \overline{u}_{\nu}-\overline{u}\Vert_{L^1} \to 0
\quad\text{as $\nu\uparrow\infty$}\,.
$$
Then, we take a suitable approximation
$g_{\nu}$ of $g$ piecewise contant w.r.t. $x$, i.e., following~\cite[\S~3]{CP},
we let
\be
\label{eq:gnu1}
g_\nu(t,x,v)\doteq \sum_{j\in\Z} \chi_{[j\eps_\nu,(j+1)\eps_\nu[}(x)
g_j(t,v)\,,
\ee
where $\chi_I$ is the characteristic function of the set $I$, and
\be
\label{eq:gnu2}
g_j(t,v) = \frac{1}{\eps_\nu} \int_{j\eps_\nu}^{(j+1)\eps_\nu}
g(t,x,v)\, dx\,.
\ee
Then, the algorithm that
leads to the construction of the approximation $u_{\nu}=u_{\nu}(t,x)$
essentially consists of the following steps. 
\begin{enumerate}
\item
We apply a front tracking algorithm as described in~\cite{AMfr}, which we refer
to, to construct
an $\eps_\nu$-approximate front tracking solution in the sense of
Definition~\ref{def:ft} in the time interval $]0,\tau_{\nu}[$.
\item
At $t=\tau_{\nu}$ we correct the term $u_{\nu} (\tau_{\nu}-,\cdot)$
by setting
$$
u_{\nu} (\tau_{\nu}+,\cdot)= u_{\nu} (\tau_{\nu}-,\cdot)+
\tau_{\nu} g_{\nu} \big(  \tau_{\nu} , \cdot, u_{\nu} (\tau_{\nu}-,\cdot) \big))\,,
$$
which turns out to be piecewise constant by construction.
\item
In general, once $u_{\nu} (n\tau_{\nu}+,\cdot)$, $n\geq 1$, is given,
we again use the algorithm in~\cite{AMfr} to construct
an $\eps_\nu$-approximate front tracking solution in the time interval
$]n\tau_{\nu}, (n+1)\tau_{\nu}[$.
\item
Similarly to what done above at $t=\tau_{\nu}$,
at $t=(n+1)\tau_{\nu}$ we correct the term $u_{\nu} ((n+1)\tau_{\nu}-,\cdot)$
by setting
$$
u_{\nu} \big( (n+1)\tau_{\nu}+,\cdot \big)= u_{\nu} \big(
(n+1) \tau_{\nu}-,\cdot \big)+
\tau_{\nu} g_{\nu} \big(  (n+1)\tau_{\nu} , \cdot, u_{\nu}
\big( (n+1)\tau_{\nu}-,\cdot \big) \big))\,.
$$
\end{enumerate}
 We stress that, in the construction described above, nonphysical waves are implicitly restarted at each time step: the corresponding jumps are solved using physical waves. 
As it is usual with such algorithms, the main difficulties we have to face are to
\begin{itemize}
\item
bound uniformly the total variation of $u_{\nu} (t,\cdot)$ in order to get
compactness of the approximating sequence;
\item
let the number of the fronts to remain bounded in any time interval $[0,t]$.
\end{itemize}
We will briefly discuss how to overcome the first difficulty in
Subsection~\ref{Ss:PreviousEstimates}, taking advantage of the results contained
in~\cite{AMfr,sie,CP}. Regarding the second difficulty,
using the arguments contained in~\cite[Subsection~6.2]{AMfr},
it can be easily seen that
the number of wave fronts stays bounded in each time interval $[k\tau_\nu,
(k+1)\tau_\nu[$, and their number depends on the parameter $\eps_\nu$
and on the total variation of $u_\nu(t,\cdot)$ which remains uniformly bounded.

%:
%:
%:STIME
\subsection{Evolution / interaction estimates}
\label{Ss:PreviousEstimates}
\indent

In correspondence of a sequence $\{\eps_\nu\}_{\nu\geq 1}\subset\R^{>0}$,
$\eps_\nu\to 0$,
and following~\cite{Glimm}, in this subsection we will define the interaction potential
and give the interaction estimates that will allow us to perform uniform
bounds on the total variation of an $\eps_\nu$ frotn tracking approximate
solution. To this purpose,
following~\cite[Definition~3.5]{sie}, we first introduce a definition
of quantity of interaction between wave-fronts of an approximate
solution.
\begin{definition}
\label{def:qi}
{\rm
Consider 
two interacting wave-fronts of sizes $s', s''$ ($s'$ located on 
the left of $s''$), belonging to the $k', k''\in\{1,\dots, N+1\}$-th
characteristic family, respectively, and let 
 $u^L,\,u^M,\,u^R$,
 denote the left, middle and right
states before the interaction.
%so that $u^M=T_i[u^L](s'),\, u^R=T_j[u^M](s'')$
We say that the  {\it
amount of interaction} ${\I}(s^\prime,\,s^{\second})$
between $s^\prime$ and $s^{\second}$ is the quantity defined as
follows.
\begin{enumerate}
  \item
    If $s^\prime$ and $s^{\second}$ belong to different characteristic
    families, i.e. if $k''<k'\leq N+1$, then set
    \begin{equation}
      \label{eq2:intdf}
	{\I}(s^\prime,\,s^{\second}) \doteq \vert s^\prime
	s^{\second} \vert\,.
    \end{equation}
  \item
    If $s^\prime$ and $s^{\second}$ belong to the same $k\, (\leq\!\!
    N)$-th characteristic family $(k\doteq k'=k'')$, i.e. if
    $u^M=T_k[u^L](s'),\, u^R=T_k[u^M](s'')$, let $\widetilde
    F^{\prime,L} \doteq \widetilde F_k[u^L](s',\,\cdot\,)$ and
    $\widetilde F^{\second,M}\doteq \widetilde
    F_k[u^M](s'',\,\cdot\,)$ be the reduced flux with starting point
    $u^L$, $u^M$, evaluated along the solution of~(\ref{eq2:Ti}) on
    the interval $[0,s^\prime]$, and $[0,s^\second]$, respectively
    (cfr. def.~(\ref{eq:sigmaFdef})).  Then, assuming that~$s\geq 0$, we
    shall distinguish three cases.
    \begin{enumerate}
      \item if $s^\second\geq 0$\, set:
        \be
	\begin{aligned}
	  \label{eq2:qi1}
	\I(s^\prime,\,s^{\second}) &\doteq \int_0^{s^\prime}
	  \left\vert \conv_{[0,\,s^\prime]} \widetilde F^{\prime,L}(\xi) -
	  \conv_{[0,\,s^\prime+s^\second]}
	\widetilde F^{\prime,L}
	  \!\cup\! \widetilde F^{\second,M}
	(\xi) \right\vert d\xi\\
	  \noalign{\smallskip}
	  &\quad + \int_{s^\prime}^{s^\prime+s^\second} \left\vert
	  \widetilde F^{\prime,L} (s^\prime)+\conv_{[0,\,s^\second]} 
	  \widetilde F^{\second,M} (\xi-s^\prime) \right.\\
	  \noalign{\smallskip}
	  &\quad\quad \left. -
	  \conv_{[0,\,s^\prime+s^\second]} \widetilde F^{\prime,L}
	  \!\cup\! \widetilde F^{\second,M} (\xi) \right\vert d\xi\,,
	\end{aligned}
	\ee
	where $\widetilde F^{\prime,L} \!\cup\! \widetilde
        F^{\second,M}$ is the function defined on $[0,\,s'+s'']$ as
	\begin{equation}
	  \label{eq2:FpFs}
	  \widetilde F^{\prime,L} \!\cup\! \widetilde F^{\second,M} (s) \doteq
	  \begin{cases}
	    \widetilde F^{\prime,L}(s) \quad &\text{\testo{if}\quad $s\in
	    [0,s^\prime]$\,,}\\
	    \noalign{\smallskip}
	    \widetilde F^{\prime,L}(s^\prime) + \widetilde
	    F^{\second,M}(s-s^\prime) \quad &\text{\testo{if}\quad $s\in
	    [s^\prime, s^\prime+s^\second]$\,.}
	  \end{cases}
	\end{equation}
        \item \ if $-s^\prime\leq s^\second\!<0$\, set:
	\be
	\begin{aligned}
	  \label{eq2:qi2}
	  {\I}(s^\prime,\,s^{\second}) &\doteq
	  \int_0^{s^\prime+s^\second} \left\vert \conv_{[0,\,s^\prime]}
	  \widetilde F^{\prime,L}(\xi) - \conv_{[0,\,s^\prime+s^\second]}
	  \widetilde F^{\prime,L}(\xi) \right\vert d\xi\\
	  \noalign{\smallskip}
	  &+ \int_{s^\prime+s^\second}^{s^\prime} \left\vert
	  \conv_{[0,\,s^\prime]} \widetilde F^{\prime,L}(\xi) -
	  \conc_{[s^\prime+s^\second,\,s^\prime]} F^{\prime,L}(\xi)
	  \right\vert d\xi\,.
	\end{aligned}
        \ee	
      \item \ if $s^\second<-s^\prime$\, set: 
	\be
	\begin{aligned}
	  \label{eq2:qi3}
	  {\I}(s^\prime,\,s^{\second}) &\doteq
	  \int_{s^\prime+s^\second}^0 \left\vert \conc_{[s^\second,\,0]}
	  \widetilde F^{\second,M}(\xi-s') - \conc_{[s^\second,\,-s^\prime]}
	  \widetilde F^{\second,M}(\xi-s') \right\vert d\xi
	 \\
	  \noalign{\smallskip}
	  &+ \int_0^{s^\prime} \left\vert \conc_{[s^\second,\,0]}
	  \widetilde F^{\second,M}(\xi-s') -
	  \conv_{[-s^\prime,\,0]}\widetilde F^{\second,M}(\xi-s')
	  \right\vert d\xi\,.
	\end{aligned}
        \ee
       \end{enumerate}
	In the case where $s^\prime<0$, one replaces in
	(\ref{eq2:qi1})-(\ref{eq2:qi3}) the convex envelope with the
	concave one, and vice-versa.
\end{enumerate}
}
\end{definition}
\begin{remark}
\label{rem4:amshocks}
{\testo
By Remark~\ref{rem2:consform} one can easily verify that, in the
conservative case, if $s', s''$ are both shocks of the $k$-th family
that have the same sign, then the amount of interaction in
(\ref{eq2:qi1}) takes the form
$$
\I(s', s'')=\big|s' s''\big| \Big|\sigma_k[u^L, u^M]-\sigma_k[u^M, u^R]\Big|\,,
$$
i.e. it is precisely the product of the strength of the waves times
the difference of their Rankine Hugoniot speeds.
}
\end{remark}
Now, whenever a $\eps$-approximate front tracking solution
$u^\eps=u^\eps(t,x)$ to~\eqref{eq:syshom} is given,
we define the interaction potential (see~\cite[(4.2)]{sie})
\be
\label{eq:ip}
\Q (u^\eps(t,\cdot)) = \sum_{\substack{i<j\\ x'>x''}}
\big\vert s'_{x',i}s''_{x'',j} \big\vert +
\frac{1}{4} \sum_{x',x'',i} \int_0^{\vert s_{x',i}\vert} \int_0^{s_{x'',i}}
\big\vert \sigma_{x',i}(\tau')-\sigma_{x'',i}(\tau'') \big\vert\, d\tau'd\tau''\,,
\ee
where $s_{x,k}$ is the size of the wave of the $k$-th characteristic family
at $x$, and $\sigma_{x,k}(\tau)$ is its speed as it is defined at~\eqref{eq2:Ti}.
Moreover we let
\be
\label{eq:V}
\mathcal{V} (u^\eps(t,\cdot)) = \sum_{x,i} \vert s_{x,i}\vert
\ee
With these definitions, the following result holds (see~\cite[Proposition~4.1]{sie}):
\begin{proposition}
\label{pro:ie}
There exists $\dH>0$ such that, if
$u^\eps=u^\eps(t,x)$ is an $\eps$-approximate front tracking solution
to~\eqref{eq:syshom}-\eqref{eq:inda} with $\TV \overline{u}<\dH$,
the the following holds. There exists constants $c,C_1>0$ such that,
whenever two wave fronts $s',s''$ interact, then
$$
\Delta \mathcal{Q} \leq -c\I (s',s'')\,,
$$
and, moreover, the functional
\be
\label{eq:Ups}
t\mapsto \Upsilon (u^\eps(t,\cdot)) = \mathcal{V} (u^\eps(t,\cdot))
+ C_1\Q (u^\eps(t,\cdot))
\ee
is decreasing.
\end{proposition}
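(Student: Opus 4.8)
The statement concerns an $\eps$-approximate front tracking solution of the \emph{homogeneous} system \eqref{eq:syshom}, so it is precisely \cite[Proposition~4.1]{sie} (resting on the interaction analysis of \cite{AMfr}); I sketch the argument. Since between consecutive interaction times all fronts travel along straight lines with constant speeds and the states are constant, the functionals $\mathcal V$, $\Q$ and hence $\Upsilon$ are constant there; thus it suffices to analyse the effect of a \emph{single} interaction of two incoming fronts $s'$ (family $k'$, on the left) and $s''$ (family $k''$), with left/middle/right states $u^{L},u^{M},u^{R}$, and to let $s_{1}^{+},\dots,s_{N}^{+}$ (plus possibly a nonphysical front) be the sizes of the outgoing waves produced by solving the Riemann problem $(u^{L},u^{R})$ as in \S\ref{subsec:RP}. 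I split into the two cases of Definition~\ref{def:qi}.

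\textbf{Different families} ($k''<k'$). Here the classical estimate gives $|s_{k'}^{+}-s'|+|s_{k''}^{+}-s''|+\sum_{j\neq k',k''}|s_{j}^{+}|\le C_{0}\,|s's''|=C_{0}\,\I(s',s'')$, with $C_{0}$ depending only on $A$ and $\dH$, hence $\Delta\mathcal V=\sum_{j}|s_{j}^{+}|-|s'|-|s''|\le C_{0}\,\I(s',s'')$. For the potential, the product $|s's''|=\I(s',s'')$ present in $\Q$ before the interaction disappears, while the products newly created between the outgoing waves and the unchanged fronts of $u^{\eps}(t,\cdot)$, together with the variation of the quadratic same-family terms attached to the perturbed waves, are bounded by $C_{1}'(\mathcal V+\eps)\,\I(s',s'')$. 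Thus $\Delta\Q\le-\I(s',s'')+C_{1}'(\mathcal V+\eps)\,\I(s',s'')$.

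\textbf{Same family} ($k'=k''=:k\le N$). This is the core of the proof. Using \eqref{eq:solRPi} one shows that the $k$-family part of the solution of $(u^{L},u^{R})$ is governed by the lower convex envelope of the concatenation $\widetilde F^{\prime,L}\cup\widetilde F^{\second,M}$ of the two reduced flux functions (the upper concave envelope for negative sizes), and that the outgoing waves of the families $j\neq k$ have total strength $O(\I(s',s''))$, so again $\Delta\mathcal V\le C_{0}\,\I(s',s'')$. The key geometric fact is that passing from $\conv\widetilde F^{\prime,L}$ and $\conv\widetilde F^{\second,M}$ to $\conv(\widetilde F^{\prime,L}\cup\widetilde F^{\second,M})$ can only flatten the speed profile $\sigma_{k}=\tfrac{d}{d\tau}\conv\widetilde F_{k}$, so that the quadratic same-family term of $\Q$ attached to the two merging fronts drops by an amount comparable to $\I(s',s'')$; this is exactly why $\I$ is defined through these envelopes in Definition~\ref{def:qi} (and in the conservative case it reduces to $|s's''|\,|\sigma_{k}[u^{L},u^{M}]-\sigma_{k}[u^{M},u^{R}]|$, cf.\ Remark~\ref{rem4:amshocks}). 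Collecting terms, $\Delta\Q\le-c_{0}\,\I(s',s'')+C_{1}'(\mathcal V+\eps)\,\I(s',s'')$ for some $c_{0}>0$, the remaining change being bounded by $C_{1}'(\mathcal V+\eps)\,\I(s',s'')$ as in the previous case. Interactions in which one of the fronts is nonphysical (family $N+1$) are handled in the same way, the jump being re-solved by physical fronts plus a new nonphysical front whose strength is again $O(\I)$.

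\textbf{Assembly and bootstrap.} Set $c:=c_{0}/2$ and choose $C_{1}>C_{0}/c$. If $\mathcal V(u^{\eps}(t,\cdot))+\eps\le\delta^{\ast}:=c/(2C_{1}')$ just before the interaction, the two cases give $\Delta\Q\le-c\,\I(s',s'')$ and
$$\Delta\Upsilon=\Delta\mathcal V+C_{1}\,\Delta\Q\le\big(C_{0}-C_{1}c+C_{1}C_{1}'(\mathcal V+\eps)\big)\,\I(s',s'')\le\big(C_{0}-\tfrac12 C_{1}c\big)\,\I(s',s'')<0.$$
To keep $\mathcal V+\eps$ below $\delta^{\ast}$, argue by induction over the finitely many interaction times in any bounded interval: if $\mathcal V+\eps<\delta^{\ast}$ holds up to the $m$-th interaction then $\Upsilon$ has been non-increasing there, whence $\mathcal V(u^{\eps}(t,\cdot))\le\Upsilon(u^{\eps}(t,\cdot))\le\Upsilon(u^{\eps}(0,\cdot))\le\TV\overline u+C_{1}(\TV\overline u)^{2}$, which is again $<\delta^{\ast}-\eps$ provided $\dH$ (and the threshold on the $\eps_{\nu}$) satisfy $\dH+C_{1}\dH^{2}<\delta^{\ast}$; this closes the induction, so $\mathcal V+\eps<\delta^{\ast}$ for all $t$, giving $\Delta\Q\le-c\,\I(s',s'')$ at every interaction and $\Upsilon$ non-increasing (strictly decreasing whenever $\I(s',s'')>0$). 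The main obstacle is the same-family step: proving that the Riemann solver for $(u^{L},u^{R})$ is driven by $\conv(\widetilde F^{\prime,L}\cup\widetilde F^{\second,M})$ and that the associated quadratic potential decreases by $\gtrsim\I(s',s'')$ — a Bianchini–Bressan-type estimate on convex envelopes of reduced fluxes — while simultaneously bounding the spurious outgoing waves of the families $j\neq k$; the different-family case and the bootstrap are then routine.
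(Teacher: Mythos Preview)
Your identification is correct: the paper does not supply a proof of this proposition at all, it simply states it as a citation to \cite[Proposition~4.1]{sie} (with the front-tracking framework taken from~\cite{AMfr}). Your sketch of the interaction analysis---splitting into different-family and same-family cases, using the convex-envelope structure of the reduced fluxes to control the same-family quadratic term in $\Q$, and closing with the standard smallness bootstrap on $\mathcal V$---is a faithful outline of what is done in those references, so there is nothing to compare against in the present paper.
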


\section{Existence and convergence of approximations}
\label{S:convergenceAndExistence}

In this section we prove that the approximations constructed in \S~\ref{sec:PCA} converge to the entropy solution of the Cauchy problem~\eqref{eq:sysnc}-\eqref{eq:inda}.
We first prove rough estimates that ensure the local-in-time convergence, as stated in Theorems~\ref{T:localConv}-\ref{T:localEst} below, which yield Theorem~\ref{Th:Exloc}.
Uniqueness is proved roughly following the lines of~\cite{AG2}.

\subsection{Local in time existence of time-step approximations}

Let $\Upsilon$ be the functional introduced in~\eqref{eq:Ups}.

\begin{theorem}
\label{T:localConv}
There exist $\dssb, T>0$ such that for initial data $\overline{\uu}$ in the closed domain
\[
\mathfrak D_{p}(\dssb):=\left\{\uu\in L ^{1}(\R;\R^{N})\cap \BV(\R;\R^{N})\text{ piecewise constant s.t.~} \Upsilon(\uu)\leq\dssb\right\}
\]
the algorithm described in \S~\ref{sec:PCA} defines for $t\in[0,T]$ and for every $\nu$ an approximating function
\bel{E:domt}
\ww_{\nu}(t,\cdot)\in\mathfrak D_{p}\left( \dssb+Gt\right) \quad \text{where $C,G$ only depend on $A$ and $g$.}
\ee
This approximating function $u_{\nu}$ satisfies the following comparison estimate with the viscous semigroup~\cite{Ch1} $\vSC{t}{h}[\cdot]$ of the Cauchy problem~\eqref{eq:sysnc}-\eqref{eq:inda} starting at time $h$: there is a function $o(s)$ depending only on $A$, $g$, $\dssb$, $T$ such that $o(s)/s\to0$ if $s\to0$ and such that for $n\in\nat$
\bel{E:uniqEst}
\big\lVert \ww_{\nu}(n \tau_{\nu}+,\cdot) - \vSC{n \tau_{\nu}}{(n-1)\tau_{\nu}} \left[\ww_{\nu}((n-1)\tau_{\nu}+,\cdot)\right]\big\rVert_{L^{1}} \leq \OL(1)\left(o(\tau_{\nu}) +\eps_{\nu}\tau_{\nu}
%+\omega(\tau_{\nu}) \tau_{\nu}
\right)
\ .
\ee
\end{theorem}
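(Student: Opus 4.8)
The plan is to prove Theorem~\ref{T:localConv} in two stages: first a uniform-in-$\nu$ control of the relevant functionals along the fractional-step scheme on a time interval $[0,T]$ independent of $\nu$ (this yields membership in $\mathfrak D_p(\dssb+Gt)$), and then the comparison estimate \eqref{E:uniqEst} against the viscous semigroup, obtained by an inductive error-propagation argument.

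\medskip
\textbf{Step 1: Domain invariance and uniform BV bounds.}
Fix $\nu$ and recall that between consecutive sampling times $n\tau_\nu$ and $(n+1)\tau_\nu$ the approximate solution $\ww_\nu$ is produced by the homogeneous front-tracking algorithm of~\cite{AMfr}. On each such interval Proposition~\ref{pro:ie} applies: the functional $\Upsilon(\ww_\nu(t,\cdot))=\mathcal V+C_1\mathcal Q$ is nonincreasing, and at each interaction $\Delta\mathcal Q\leq-c\,\I(s',s'')$. The source correction step $\ww_\nu((n+1)\tau_\nu+,\cdot)=\ww_\nu((n+1)\tau_\nu-,\cdot)+\tau_\nu g_\nu(\cdots)$ is a perturbation that is piecewise constant by construction; using Assumption~(G) (Lipschitz in $u$, the $L^1$ bound $|g_x(t,x,u)|\leq\alpha(x)$ with $\alpha\in L^1$) one estimates the increment of total variation, and hence of $\mathcal V$ and $\mathcal Q$, by $\OL(1)\tau_\nu(\mathcal V+\lVert\alpha\rVert_{L^1})$ — this is the standard "operator-splitting produces at worst exponential growth" bound, following~\cite{CP,AGG,AG2,AMfr}. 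A discrete Gronwall argument then gives $\Upsilon(\ww_\nu(t,\cdot))\leq(\dssb+C\lVert\alpha\rVert_{L^1}t)e^{Ct}$, and choosing $T$ small enough keeps this below the threshold required for Proposition~\ref{pro:ie} to remain applicable and for the Riemann-solver estimates of \S~\ref{subsec:RP} to be valid; this gives \eqref{E:domt} with a suitable $G$. The same bound, together with~\cite[\S~6.2]{AMfr}, controls the number of fronts on $[0,T]$, so the scheme is well-defined.

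\medskip
\textbf{Step 2: The one-step comparison with the viscous semigroup.}
The heart of \eqref{E:uniqEst} is to compare, over a single time step $[(n-1)\tau_\nu,n\tau_\nu]$, the map $\ww_\nu((n-1)\tau_\nu+,\cdot)\mapsto\ww_\nu(n\tau_\nu+,\cdot)$ with the semigroup $\vSC{n\tau_\nu}{(n-1)\tau_\nu}$. Write the step as the composition of (a) homogeneous front tracking on the interval and (b) the Euler source correction $v\mapsto v+\tau_\nu g_\nu(n\tau_\nu,\cdot,v)$. For (a), the classical error estimate for $\eps_\nu$-front-tracking approximations of the homogeneous system against its (vanishing-viscosity) semigroup gives an $L^1$ discrepancy of order $\OL(1)(\eps_\nu+\,\text{error from non-physical fronts})\cdot\tau_\nu$, which I will package into the $\eps_\nu\tau_\nu$ term and part of $o(\tau_\nu)$. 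For (b), one compares the explicit Euler step with the exact time-$\tau_\nu$ flow of $\partial_t v=g_\nu(t,\cdot,v)$ (and then with $g$ in place of $g_\nu$, using \eqref{eq:gnu1}--\eqref{eq:gnu2} and $\alpha\in L^1$): the local truncation error is $o(\tau_\nu)$ by continuity of $g$ in $t$ and Lipschitz continuity in $u$, uniformly on the bounded set where $\ww_\nu$ lives by Step~1. Finally one invokes the semigroup property and the Lipschitz dependence of $\vSC{\cdot}{\cdot}$ on its initial datum~\cite{Ch1} to split $\vSC{n\tau_\nu}{(n-1)\tau_\nu}$ into its homogeneous part plus a Duhamel-type source contribution, and matches terms. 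The modulus $o(s)$ is obtained by taking, over the compact datum-set fixed in Step~1, the supremum of the (uniformly vanishing) local errors from (a) and (b); it depends only on $A,g,\dssb,T$ as claimed, and by construction $o(s)/s\to0$.

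\medskip
\textbf{Main obstacle.}
I expect the delicate point to be the single-step homogeneous comparison in part (a): controlling the $L^1$ distance between the $\eps_\nu$-front-tracking solution and the viscous semigroup over an interval of length $\tau_\nu\leq\eps_\nu$ in a way that is \emph{linear} in $\tau_\nu$ with the right constants, and in particular handling the non-physical fronts that are "restarted" at each sampling time (as noted after the algorithm description). One must ensure that the restarting does not spoil the per-step bound \eqref{npbound} nor inject an error worse than $\OL(1)\eps_\nu\tau_\nu$; this requires tracking how the non-physical strength created in $[(n-1)\tau_\nu,n\tau_\nu]$ is absorbed, using the interaction-potential decrease from Proposition~\ref{pro:ie} to pay for it. The source step (b) is comparatively routine ODE error analysis once the uniform bounds of Step~1 confine the solution to a fixed compact set.
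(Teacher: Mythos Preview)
Your two-stage strategy matches the paper's proof essentially step for step: Step~1 is an induction on time-steps combining the monotonicity of $\Upsilon$ under homogeneous front tracking (Proposition~\ref{pro:ie}) with a perturbation estimate for the source update (Lemma~\ref{L:timeEst}, which is the precise version of what you sketch), and Step~2 is a triangle-inequality decomposition of the one-step error into a front-tracking-versus-homogeneous-semigroup piece and a homogeneous-plus-source-versus-full-semigroup piece.

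Two remarks on emphasis. First, the obstacle you single out --- controlling non-physical fronts over a step of length $\tau_\nu\leq\eps_\nu$ --- is not where the work lies: the paper simply invokes the ready-made estimate $\lVert\ftS{t+s}{t}\overline\ww - \vSB{t+s}{t}\overline\ww\rVert_{L^1}\lesssim(1+\dssb)\eps_\nu\tau_\nu$ from \cite[(3.5)]{AMfr} (recorded here as~\eqref{E:ftest1}), which already absorbs the non-physical contribution and is linear in $\tau_\nu$. Second, what you call the ``Duhamel-type splitting'' of $\vSC{}{}$ is in the paper not derived but \emph{cited} as the tangency estimate $\lVert\vSC{h+s}{h}\overline\ww - \vSB{h+s}{h}\overline\ww - s\,g(h,\cdot,\overline\ww)\rVert_{L^1}\leq\OL(1)s^2$ from~\cite{Ch1,Ch2} (equation~\eqref{E:estVisc}); shifting the evaluation point of $g$ via Assumption~(G) then gives the $o(\tau_\nu)$ term. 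So your Step~2(b) comparing an Euler step to an ODE flow is unnecessary once this estimate is in hand --- the relevant comparison is directly between $\vSB{}{}+\tau_\nu g$ and $\vSC{}{}$, not via an intermediate ODE trajectory.
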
 

\paragraph{Introduction to the proof.}
Before the proof, we briefly remind our notation and previous results that we need. We denote by $\ftS{t}{h}$ the wave-front tracking approximation of the semigroup $\vSB{t}{h}[\cdot]$ relative to the homogeneous system, constructed by vanishing viscosity~\cite{BB}, where the `initial datum' is fixed at time $h\leq t$ rather than at $h=0$.

We exploit the definition in \S~\ref{sec:PCA} of the approximation
\bel{E:approx}
\begin{split}
\ww_{\nu}(t,\cdot) &=\ftS{ t}{(n-1)\tau_{\nu}}\left[\ww_{\nu}((n-1) \tau_{\nu}+,\cdot) \right]
\qquad\qquad\text{for $(n-1)\tau_{\nu}<t<n\tau_{\nu}$, $n\in\N$}
\\
\ww_{\nu}( n\tau_{\nu}+,\cdot) 
&\equiv 
 \ftS{ n\tau_{\nu}}{(n-1)\tau_{\nu}}\left[\ww_{\nu}((n-1) \tau_{\nu}+,\cdot) \right]+ \tau_{\nu}g\left(  n\tau_{\nu},\cdot, \ftS{ n\tau_{\nu}}{(n-1)\tau_{\nu}}\left[\ww_{\nu}( n-1) \tau_{\nu}+,\cdot) \right] \right)
\\
&\equiv \ww_{\nu}( n\tau_{\nu}-,\cdot) + \tau_{\nu}g\left( n \tau_{\nu},\cdot, \ww_{\nu}( n\tau_{\nu}-,\cdot) \right)
\end{split}
\ee
relative to the balance law with the initial condition $\ww_{\nu}(0+,\cdot)\equiv\bar u(\cdot)$.
We recall that 
\[
\text{if $\overline \ww\in \mathfrak D_{p}(\dssb)$, for $\dssb\leq \dH$ small enough as in~\cite{AMfr}, }
\]
then
\ba
\label{E:rgabab}
&\Upsilon\left(\ftS{t}{h}\overline\ww\right)\leq\Upsilon(\overline\ww)
\qquad \forall0\leq h\leq t\leq \tau_{\nu}<T&
&\text{see~\cite[(6.4)]{AMfr} or Proposition \ref{pro:ie} above}&
\\
\label{E:ftest1}
& \lVert \ftS{n \tau_{\nu}}{(n-1)
\tau_{\nu}} \overline{\ww} -\vSB{n \tau_{\nu}}{(n-1) \tau_{\nu}} \overline{\ww}
\rVert_{L^{1}} \lesssim (1+\dssb)\varepsilon_{\nu}\tau_{\nu}&
&\text{see \cite[(3.5)]{AMfr}}&
\\
\label{E:ftest2}
& \lVert \ftS{t+s}{t}\overline{\ww} -\overline{\ww}\lVert \leq L s &
&\text{see \cite[(1.23)]{AMfr}}&
\ea
We also borrow the following lemma from~\cite[Lemmas~2.1-2]{AG2}, given in a similar setting. Of course we could state it similarly also localizing in space the estimates.
We remind that $\Upsilon, \Q$ are the functionals introduced in~\eqref{eq:ip}-\eqref{eq:Ups} while $\ell_{g}$ and $\alpha$ are as in the assumption (G) on the source term at Page~\pageref{Ass:G}.

\begin{lemma}
\label{L:timeEst}
Let $t>0$.
If $0<\dssb<\dH$ and $\overline{\ww},\overline{\uu}$ are piecewise constant with $\Upsilon(\overline{\uu})+\Upsilon(\overline{\ww})\leq\dssb$ then \[\vv(x):=\overline{\uu}(x)+\tau g_{\nu}(t,x,\overline{\ww}(x))\] satisfies  for $G= \max\{  \ell_{g}\TV(\overline\ww)+\TV(\overline\uu)+\norm{\alpha}_{L^{1}}; 1\} $ the inequalities
\bel{E:rwrg}
\left| \TV(\vv)- \TV(\overline\uu)\right|\lesssim G \tau
\qquad
\left|\Q(\vv) - \Q(\overline\uu)\right|
\lesssim G^{2} \tau
\qquad
\left|\Upsilon(\vv) - \Upsilon(\overline\uu)\right|
\lesssim
 G^{2}\tau \ .
\ee
\end{lemma}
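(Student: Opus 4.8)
The plan is to establish the three estimates in \eqref{E:rwrg} by analysing separately the effect of adding the small term $\tau g_{\nu}(t,x,\overline{\ww}(x))$ to the piecewise constant function $\overline\uu$. First I would observe that $\vv$ is again piecewise constant, with jump points contained in the union of the jump points of $\overline\uu$, the jump points of $\overline\ww$ (which affect $g_{\nu}(t,\cdot,\overline\ww(\cdot))$ through its last argument), and the grid points $\{j\eps_{\nu}\}$ where $g_{\nu}$ is discontinuous by construction \eqref{eq:gnu1}. At each jump point $x_{\beta}$ the size of the new jump is
\[
\vv(x_{\beta}+)-\vv(x_{\beta}-) = \big(\overline\uu(x_{\beta}+)-\overline\uu(x_{\beta}-)\big) + \tau\big(g_{\nu}(t,x_{\beta}+,\overline\ww(x_{\beta}+)) - g_{\nu}(t,x_{\beta}-,\overline\ww(x_{\beta}-))\big),
\]
and the perturbation term is controlled, using assumption \textbf{(G)} (Lipschitz continuity of $g$ in $x,u$ with the $L^{1}$ bound $\alpha$ on $g_{x}$, which passes to the averaged $g_{\nu}$), by $\tau\big(\ell_{g}|\overline\ww(x_{\beta}+)-\overline\ww(x_{\beta}-)| + \text{(local variation of }\alpha)\big)$ plus, at the grid points, a term whose total over all $j$ is $\le \tau\|\alpha\|_{L^{1}}$. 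Summing over all jump points yields $|\TV(\vv)-\TV(\overline\uu)| \le \tau(\ell_{g}\TV(\overline\ww) + \|\alpha\|_{L^{1}} + \text{const}\cdot\TV(\overline\uu)) \lesssim G\tau$, which is the first inequality.

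For the second and third inequalities the key point is to expand the interaction potential $\Q$, as defined in \eqref{eq:ip}, around $\overline\uu$. One first decomposes each wave of $\vv$ according to its characteristic family by solving, at each jump point, the Riemann problem with states $\vv(x_{\beta}\pm)$ via the map $\Phi$ in \eqref{eq2:RP1}; since $\vv(x_{\beta}\pm)$ differs from $\overline\uu(x_{\beta}\pm)$ by $O(\tau G)$, and the wave-decomposition map is Lipschitz, the wave sizes $s^{\vv}_{x_{\beta},i}$ differ from the $s^{\overline\uu}_{x_{\beta},i}$ by $O(\tau G)$ for the ``old'' jump points, while the ``new'' jump points (at the grid points) carry total strength $O(\tau\|\alpha\|_{L^{1}})=O(\tau G)$. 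Then $\Q(\vv)-\Q(\overline\uu)$ is estimated by writing $\Q$ as a bilinear-type form in the wave sizes and speeds: each term is either a product $|s'_{i}s''_{j}|$ of sizes of different families, or a double integral of speed differences bounded by $|s'_{i}||s''_{i}|$ times a modulus of continuity. Replacing one factor at a time, the difference is bounded by $(\text{total perturbation of sizes})\times(\text{total size present}) \lesssim \tau G\cdot(\TV(\overline\uu)+\tau G) \lesssim \tau G^{2}$, since $\TV(\overline\uu)\le G$ and $\tau G \le G^{2}$ (using $G\ge 1$, $\tau\le T$ suitably small). The estimate on $\Upsilon = \mathcal V + C_{1}\Q$ is then immediate by combining the first inequality (which controls $\mathcal V$, as $\mathcal V(\vv)-\mathcal V(\overline\uu)$ is comparable to $\TV(\vv)-\TV(\overline\uu)$ up to the same $O(\tau G)$ error from re-decomposing waves) with the bound on $\Q$, giving $|\Upsilon(\vv)-\Upsilon(\overline\uu)| \lesssim \tau G + \tau G^{2} \lesssim \tau G^{2}$.

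The main obstacle I anticipate is the careful bookkeeping in the $\Q$ estimate: the interaction potential is quadratic in the wave strengths and involves the reduced-flux speeds $\sigma_{x,i}(\tau)$ defined through the integral system \eqref{eq2:Ti}, so one must verify that these speeds depend Lipschitz-continuously (or at least with a controlled modulus of continuity) on the left and right states of each jump, in order to conclude that perturbing the states by $O(\tau G)$ perturbs the speed profiles by $O(\tau G)$ in the relevant $L^{1}$-in-$\tau$ sense. This is where one leans on the regularity of the construction in \cite{BB} recalled in \S\ref{subsec:RP}, in particular the Lipschitz dependence of the fixed point \eqref{E:Rfp} on $u^{L}$; granting that, the summation over pairs of waves is routine and produces the quadratic factor $G^{2}$. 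A secondary technical point is handling the ``new'' jump points introduced by the piecewise-constant-in-$x$ structure of $g_{\nu}$: one must check that their contribution to $\Q$ — which is a priori a product of their strengths with \emph{all} other wave strengths — is still only $O(\tau G^{2})$ and not larger, which follows because their \emph{total} strength is $O(\tau G)$ and the total strength of all waves is $O(G)$. Since the referenced \cite[Lemmas~2.1--2]{AG2} prove essentially this statement in a closely parallel setting, I would organize the proof to mirror theirs, pointing out only the modifications needed to accommodate the nonconservative elementary curves $T_{k}$ and the $x$-dependence of $g$ through $\alpha$.
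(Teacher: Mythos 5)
Your first estimate is essentially the paper's (the paper gets it even more directly from $|\TV(\vv)-\TV(\overline\uu)|\leq\TV(\vv-\overline\uu)=\tau\,\TV\big(g_{\nu}(t,\cdot,\overline\ww(\cdot))\big)\leq(\ell_{g}\TV(\overline\ww)+\norm{\alpha}_{L^{1}})\tau$), but the argument you give for the $\Q$ and $\Upsilon$ bounds has a genuine gap. You pass from the per-jump statement ``$\vv(x_{\beta}\pm)$ differs from $\overline\uu(x_{\beta}\pm)$ by $O(\tau G)$, and the wave-decomposition map is Lipschitz, hence the wave sizes at $x_{\beta}$ change by $O(\tau G)$'' to the claim that the \emph{total} perturbation of wave sizes is $\lesssim\tau G$. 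That does not follow: a crude Lipschitz bound on the decomposition in terms of the two states gives, at \emph{every} jump of $\overline\uu$, $\overline\ww$ or $g_{\nu}$, a perturbation of order $\tau\sup|g_{\nu}|$ with no smallness tied to the strength of that particular jump, and the number of jump points of a piecewise constant function is finite but in no way controlled by $G$ (indeed it grows with $\nu$). Summing your per-jump bound therefore produces $O(K\tau G)$ with $K$ the number of jumps, not $O(\tau G)$, and the quadratic bookkeeping for $\Q$ collapses.

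What is needed---and what the paper's proof (following \cite[Lemma~2.1]{AG2} and the calculus argument of \cite[Lemma~2.5]{BressanBook}) supplies---is a refined per-jump estimate in which the perturbation is proportional to $\tau$ times \emph{locally summable} quantities. Setting, at each jump, $d$ equal to the jump of $g_{\nu}(t,\cdot,\overline\ww(\cdot))$ (so $|d|\leq\ell_{g}\,|\overline\ww(x+)-\overline\ww(x-)|+\int_{\mathrm{loc}}|\alpha|$, using the commutative diagram with $\widehat\sigma$ measuring the jump of $\overline\ww$), one observes that the strength change $\Psi(\sigma,\tau,d)=|\sigma'-\sigma|$ vanishes both when $\tau=0$ and when $\sigma=d=0$; a second-order Taylor/calculus argument on $\Psi$ then yields $|\sigma'-\sigma|\lesssim(|\sigma|+\ell_{g}|\widehat\sigma|+\int_{\mathrm{loc}}|\alpha|)\,\tau$, which is \eqref{E:singlejump}. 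Only after this cancellation is exploited can one sum over the (arbitrarily many) jumps, since $\sum|\sigma|$, $\ell_{g}\sum|\widehat\sigma|$ and $\sum\int_{\mathrm{loc}}|\alpha|$ are bounded by $\TV(\overline\uu)$, $\ell_{g}\TV(\overline\ww)$ and $\norm{\alpha}_{L^{1}}$ respectively, so that the total strength perturbation is $O(G\tau)$; your subsequent ``replace one factor at a time'' scheme for the bilinear structure of $\Q$ in \eqref{eq:ip} (including the new grid-point waves of total strength $\tau\norm{\alpha}_{L^{1}}$, and the speeds, whose Lipschitz dependence on the states you correctly flag) then does give $O(G^{2}\tau)$ and hence the bound on $\Upsilon$. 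Without the refined estimate \eqref{E:singlejump}, however, the central step of your proposal is unsupported.
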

\begin{proof}
We remind the idea of the proof from~\cite[Lemma~2.1]{AG2}  for completeness. 
Suppose either $\overline{\uu}(x)$ or $ \vv(x)$ has a jump at $x$.
Denoting by $\Phi(\cdot)[\cdot]$ the map defined at~\eqref{eq2:RP1} for the Riemann problem, set $\widehat \sigma$ by the relation
\[
\overline\ww(x+) = \Phi(\widehat\sigma)[ \overline\ww(x-)] \ .
\]
Define then $\sigma'$ so that the following diagram commutes:
\[\begin{CD}
\overline{\uu}(x-)   @>\text{source}>> \vv(x-):=\overline{\uu}(x-)+\tau g_{\nu}(t,x-,\overline{\ww}(x-))  \\
@VV\sigma V        @V\Rightarrow V\sigma' V\\
\overline{\uu}(x+)=\Phi(\sigma)[\overline{\uu}(x-)]   
@>\text{source}>>  
\begin{split} \vv(x+)&:=\overline{\uu}(x+)+\tau  g_{\nu}(t,x+,\overline{\ww}(x+))\\
&\equiv\Phi(\sigma')[ {\vv}(x-)]    \end{split}
\end{CD}
\]
\paragraph{Estimate on the total variation}
The first estimate immediately follows since
\bas
|\TV(\vv)-\TV(\overline\uu)|&\leq \TV(\vv-\overline\uu)=\tau\cdot \TV( g_{\nu}(t,x+,\overline{\ww}(x+)))
\\
&\leq (\ell_{g} \TV(\overline\ww) + \norm{\alpha}_{L^{1}}   )\tau \ .
\eas

\paragraph{Estimate on $|\sigma'-\sigma|$}
Set 
\ba\label{E:roijgr}
d=g_{\nu}(t,x+,\overline{\ww}(x+))-g_{\nu}(t,x-,\overline{\ww}(x-))
\quad\Rightarrow\quad
|d|\leq \int_{(j-1)\eps_\nu}^{(j+1)\eps_\nu}\!\!\!\!\!|\alpha|+\ell_{g}\,\,\mathrm{Lip}\Phi\,\, \widehat\sigma \ .
\ea
Notice that the difference $|\sigma'-\sigma|$ is a function of $\sigma,\tau,d$ which identically vanishes both when $\sigma=d=0$ and when $\tau=0$, as the two rows / columns of the commutative diagram above collapse.
One can thus estimate $\Psi(\sigma,\tau,d)=|\sigma'-\sigma|$ by calculus similarly to~\cite[Lemma~2.5]{BressanBook}, since $\Psi_{\sigma}(\sigma,0,d)=\Psi_{d}(\sigma,0,d)=0$ and  $\Psi_{\tau}(0,\tau,0)=0$:
\ba
\notag
|\sigma'-\sigma|&=\left|\int_{0}^{1}\left(\sigma\Psi_{\sigma}+d\Psi_{d}\right)(z\sigma,\tau,zd)\,dz\right|
=\left|\int_{0}^{1}\int_{0}^{\tau}\left(\sigma\Psi_{\sigma\tau}+d\Psi_{d\tau}\right)(z\sigma,z',zd)\,dz'dz\right|
\\
\notag
&\lesssim ( |\sigma|+ |d|)\tau
\\
& \stackrel{\eqref{E:roijgr}}{\lesssim} \left( |\sigma|+\ell_{g} |\widehat\sigma|+\int_{(j-1)\eps_\nu}^{(j+1)\eps_\nu}|\alpha|\right)\cdot \tau
\label{E:singlejump}
\ea
since the derivatives $\partial_{\sigma\tau}|\sigma'-\sigma|$ and $\partial_{d\tau}|\sigma'-\sigma|$ are easily well defined for $\sigma\neq 0$ and locally bounded: notice that we differentiate only once the elementary curve of right states of the $k$-th family in its parameter and more times the strengths of the Riemann problem in the left / right states, thanks to the smoothness of the matrix $A$ in \eqref{eq:sysnc}.

Since~\eqref{E:singlejump} holds at each jump either of $\overline{\uu}(x)$ or $ \vv(x)$, then by algebraic computations we get the thesis.
\end{proof}

\begin{remark}
\label{R:timeupdate}
When $\overline{\ww}=\overline{\uu}$, then the proof of
Lemma~\ref{L:timeEst} states that where $\overline{\uu}$ has a jump
of strength $\sigma$ then the strength $\sigma'$ of the corresponding jump in
$\vv$ by~\eqref{E:singlejump} satisfies
\[
0< (1-\OL(1)\tau)\,\sigma\leq \sigma' \leq (1+\OL(1)\tau)\,\sigma
 \qquad
 \text{or}
 \qquad
  (1+\OL(1)\tau)\,\sigma\leq \sigma' \leq (1-\OL(1)\tau)\,\sigma <0\ .
\]
Moreover, if $\overline{u}$ does not any jump at $x=j\eps_\nu$, 
then the new jump introduced because of the discontinuity of $g_\nu$
at $j\eps_\nu$ satifies
$$
\sum_{k=1}^N|\sigma''_{k}|\leq \tau \int_{(j-1)\eps_\nu}^{(j+1)\eps_\nu}|\alpha|\,,
$$
where $\sigma''$ is the strenght of the new front of the $k$-th family
emerging from $(\tau, j\eps_\nu)$.
\end{remark}

We are now able to present the proof of Theorem~\ref{T:localConv} above.

\begin{proof}
We first prove by induction that if  
\be 
\label{E:rrepibm} 
 \dssb+G^{2}T\leq \dH
\ee
then estimate~\eqref{E:domt} concerning $\Upsilon\left(\ww_{\nu}(t+,\cdot)\right)$ holds when $0<t<T$.
We then prove the comparison with the exact viscous semigroup~\eqref{E:uniqEst}.
For brevity, we denote $\Upsilon(t):=\Upsilon\left(\ww_{\nu}(t+,\cdot)\right)$ all along this proof.

\firststep
\step{Initial step of induction}
We show, assuming~\eqref{E:rrepibm}, that
\bel{E:dadecidere1}
\Upsilon\left(\tau_{\nu}+\right)
\,\leq \,  \Upsilon\left(0\right)+G^{2}\tau_{\nu}
\,\stackrel{\eqref{E:rgabab}}{\leq} \,   \Upsilon\left(\overline u(\cdot)\right)+G^{2}\tau_{\nu} \ .
\ee
In particular, this step shows that if $\Upsilon\left(\overline u(\cdot)\right)\leq\dssb$ then being $\tau_{\nu}<T$ one has
\[
 \Upsilon\left(\tau_{\nu}+\right)
 \leq
 \dssb+G^{2}\tau_{\nu}
 \leq 
  \dssb+G^{2}T\leq
\dH \ .
\]
In particular one can restart the iteration procedure for defining $u_{\nu}$ up to $2\tau_{\nu}+$.
Observe first of all that estimate~\eqref{E:rgabab} allows to construct~\cite{AMfr} the wave-front-tracking approximation $\ftS{ t}{0}\overline\uu$ for all $t>0$.
By definition and by estimates~\eqref{E:rwrg}-\eqref{E:rgabab} recalled above then
\bas
\Upsilon( \tau_{\nu}+)=\Upsilon\left(\ww_{\nu}( \tau_{\nu}+,\cdot)\right)
&\equiv 
\Upsilon\left( \ftS{ \tau_{\nu}}{0}\overline\uu_{\nu}+ \tau_{\nu}g\left( \tau_{\nu},\cdot, \ftS{ \tau_{\nu}}{0}\overline\uu_{\nu}\right) \right)
\\
&\stackrel{\eqref{E:rwrg}}{\leq} 
\Upsilon\left( \ftS{ \tau_{\nu}}{0}\overline\uu_{\nu}\right) +G^{2}\tau_{\nu}
\\
&\stackrel{\eqref{E:rgabab}}{\leq}
\Upsilon\left( \overline\uu_{\nu}\right)+G^{2}\tau_{\nu}
\\
&\stackrel{\phantom{\eqref{E:rgabab}}}{\leq}
 \dssb +G^{2}\tau_{\nu}\ .
\eas

\step{Induction step I}
Suppose that $\Upsilon((n-1)\tau_{\nu}+)\leq \dH$.
We show that
\bel{E:dadecidere2}
\Upsilon\left(n\tau_{\nu}+\right)
\leq 
\Upsilon((n-1)\tau_{\nu}+)+G^{2}\tau_{\nu} \ .
\ee 
In this step we adopt the notation $\ww_{\nu}^{n-1}(\cdot)=\ww_{\nu}( (n-1)\tau_{\nu}+,\cdot)$ for the approximation at time $(n-1)\tau_{\nu}+$.
By definition and by the estimates~\eqref{E:rwrg}-\eqref{E:rgabab} recalled above one has
\bas
\Upsilon(n\tau_{\nu}+)
&\equiv 
\Upsilon\left( \ftS{n \tau_{\nu}}{(n-1) \tau_{\nu}}\ww_{\nu}^{n-1}+ \tau_{\nu}g\left( n \tau_{\nu},\cdot, \ftS{n \tau_{\nu}}{(n-1) \tau_{\nu}}\ww_{\nu}^{n-1}\right) \right)
\\
&\stackrel{\eqref{E:rwrg}}{\leq} 
\Upsilon\left( \ftS{n \tau_{\nu}}{(n-1) \tau_{\nu}}\ww_{\nu}^{n-1}\right)
+G^{2}\tau_{\nu}
\\
&\stackrel{\eqref{E:rgabab}}{\leq}
\Upsilon\left( \ww_{\nu}^{n-1}\right)+ 
G^{2}\tau_{\nu}
=
  \Upsilon((n-1)\tau_{\nu}+) +G^{2}\tau_{\nu}\ .
\eas

\step{Conclusion of~\eqref{E:domt}}
We deduce that whenever~\eqref{E:rrepibm} holds then
\bel{E:dadecidere3}
\Upsilon\left(t+\right)
\leq 
 \dssb+G^{2}t \leq \dH
\qquad
\text{for $0<t<T$.}
\ee
In particular, we show that $\ww_{\nu}( k\tau_{\nu}+,\cdot)$ is well defined for all $0\leq k\tau_{\nu}<T$.

By~\eqref{E:rgabab} and the definition of the approximation, it suffices to prove~\eqref{E:dadecidere3} at time-steps.
Estimate~\eqref{E:dadecidere1} provides the thesis at the first time-step $t=\tau_{\nu}$. At later time-steps, the thesis follows by induction by~\eqref{E:dadecidere2}.

\step{Proof of~\eqref{E:uniqEst}}
We recall~{\cite{Ch1,Ch2}} that there exists a small enough $\bar s>0$ for which one has the estimate
\bel{E:estVisc}
\forall s\leq \bar s, \forall \overline{\ww}\in \mathfrak D, \forall 0\leq h\leq t-s
\qquad
\lVert \vSC{h+s}{h}\overline{\ww}-\vSB{h+s}{h}\overline{\ww}-sg(h,\cdot,\overline{\ww}) \rVert_{L^{1}} \leq \OL(1)s^{2} \ .
\ee
By the triangular inequality
\[
\begin{split}\lVert g(h,\cdot,\overline{\ww} )-g(h+s,\cdot,\vSC{h+s}{h}\overline{\ww})  \rVert_{L^{1}} 
\leq
&\lVert g(h,\cdot,\overline{\ww} )-g(h+s,\cdot, \overline{\ww} ) \rVert_{L^{1}} 
\\&\qquad+
\lVert g(h+s,\cdot, \overline{\ww} ) -g(h,\cdot,\vSC{h+s}{h}\overline{\ww} ) \rVert_{L^{1}} 
\end{split}
\]
Assumption (G) at Page~\pageref{Ass:G} thus yields that for $s\to0$, denoting by $o(s)$ a function such that $o(s)/s\to0$,
\bel{E:estVisc2}
\forall s\leq \bar s, \forall \overline{\ww}\in \mathfrak D, \forall 0\leq h\leq t-s
\qquad
\lVert \vSC{h+s}{h}\overline{\ww} -\vSB{h+s}{h}\overline{\ww} -sg(h+s,\cdot, \vSC{h+s}{h}\overline{\ww} ) \rVert_{L^{1}} \leq Co(s) \ .
\ee
Let's adopt the shortcut $\ww_{\nu}^{n}(\cdot)$ for $\ww_{\nu}(n\tau_{\nu}+,\cdot)$: then by~\eqref{E:approx} and the triangular inequality
\bas
\lVert &\ww_{\nu}^{n} - \vSC{n\tau_{\nu}}{(n-1)\tau_{\nu}} \ww_{\nu}^{n-1} \rVert_{L^{1}} 
\\
&\equiv
\lVert \ftS{n \tau_{\nu}}{(n-1) \tau_{\nu}} \ww_{\nu}^{n-1}  
+ \tau_{\nu} g\left( n \tau_{\nu},\cdot, \ftS{n \tau_{\nu}}{(n-1)\tau_{\nu}} \ww_{\nu}^{n-1} \right) 
- \vSC{n\tau_{\nu}}{(n-1)\tau_{\nu}} \ww_{\nu}^{n-1} \rVert_{L^{1}} 
\\
&\leq
\lVert \vSB{n \tau_{\nu}}{(n-1) \tau_{\nu}} \ww_{\nu}^{n-1}  + \tau_{\nu} g\left( n \tau_{\nu},\cdot, \ftS{n \tau_{\nu}}{(n-1)\tau_{\nu}} \ww_{\nu}^{n-1} \right) 
- \vSC{n\tau_{\nu}}{(n-1)\tau_{\nu}} \ww_{\nu}^{n-1} \rVert_{L^{1}} 
\\
&\qquad +\lVert \ftS{n \tau_{\nu}}{(n-1) \tau_{\nu}} \ww_{\nu}^{n-1} -\vSB{n \tau_{\nu}}{(n-1) \tau_{\nu}} \ww_{\nu}^{n-1}   \rVert_{L^{1}} 
%\\
%&\qquad+ \tau_{\nu} \lVert g\left(n \tau_{\nu},\cdot,  \ww_{\nu}^{n-1} \right)  - g\left((n-1) \tau_{\nu},\cdot,  \ww_{\nu}^{n-1} \right)  \rVert_{L^{1}} 
%\\
%&\qquad + \tau_{\nu} \lVert  g\left(n \tau_{\nu},\cdot, \ftS{n \tau_{\nu}}{(n-1) \tau_{\nu}} \ww_{\nu}^{n-1} \right)
%		- g\left(n \tau_{\nu},\cdot,  \ww_{\nu}^{n-1} \right)\rVert_{L^{1}} 
\eas
We directly estimate the first addend by~\eqref{E:estVisc2}, the second addend by~\eqref{E:ftest1}:
\bas
\lVert &\ww_{\nu}^{n} - \vSC{n\tau_{\nu}}{(n-1)\tau_{\nu}} \ww_{\nu}^{n-1} \rVert_{L^{1}} 
\leq \OL(1) \left(o(\tau_{\nu})+\varepsilon_{\nu} \tau_{\nu} \right)
\eas
The proof of~\eqref{E:uniqEst} is thus concluded.
\end{proof}

\subsection{Converge of time-step approximations to the viscous solution}

\begin{theorem}
\label{T:localEst}
Suppose there exists $0<\dss<\dH$, $ T>0$ and a closed domain
\[
\mathfrak D:=\mathfrak D(\dss):=\left\{\uu\in L ^{1}(\R;\R^{N})\cap \BV(\R;\R^{N})\ :\  \Upsilon(\uu)\leq \dss\right\}
\]
such that 
\begin{itemize}
\item for all $\nu$ and for every piecewise-constant initial data $\overline{\uu}\in\mathfrak D$ the approximations $\ww_{\nu}$ constructed in \S~\ref{sec:PCA} satisfy estimates~\eqref{E:domt}-\eqref{E:uniqEst} in $[0,T]$ and 
\item for all $\overline{\uu}\in\mathfrak D$ and for $0\leq t\leq T$~\cite{Ch1} provides a vanishing viscosity solution $\vSC{t}{0}\overline{\uu}$ of the Cauchy problem~\eqref{eq:sysnc}-\eqref{eq:inda}.
\end{itemize}
Then for every $\overline{\uu}\in\mathfrak D$ one can choose a suitable piecewise-constant approximation $\overline{\ww}_{\nu}\in \mathfrak D$ of $\overline{\uu}$ such that, denoting by $\ww_{\nu}$ the $\nu$-approximation as in \S~\ref{sec:PCA} with initial datum $\overline{\ww}_{\nu}$, for a.e.~$t\in[0,T]$ the sequence $\ww_{\nu}(t,\cdot)$ converges in $L^{1} (\R;\R^{N})$ to $\vSC{t}{0}\overline{\ww}$.
\end{theorem}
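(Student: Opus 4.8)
The statement of Theorem~\ref{T:localEst} is a standard ``approximate semigroup $\Rightarrow$ convergence'' argument, so I would run the classical Bressan-type telescoping estimate, using Theorem~\ref{T:localConv} as the single-step comparison and the $L^1$-Lipschitz property of the viscous semigroup $\vSC{t}{h}$ to propagate the error. First I would fix $\overline{\uu}\in\mathfrak D$ and choose piecewise-constant $\overline{\ww}_{\nu}\to\overline{\uu}$ in $L^1$ with $\Upsilon(\overline{\ww}_{\nu})\leq\dss$ (possible because $\Upsilon$ is lower semicontinuous and controls, up to constants, the total variation, so a mollification/sampling of $\overline{\uu}$ stays in $\mathfrak D$ for $\nu$ large); one should keep $\|\overline{\ww}_{\nu}-\overline{\uu}\|_{L^1}\to 0$. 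Then $\ww_{\nu}$ is well defined on $[0,T]$ by~\eqref{E:domt}, and $\vSC{t}{0}\overline{\ww}_{\nu}$ likewise by the second hypothesis.

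\emph{Step 1: recall the Lipschitz property of the viscous semigroup.} From~\cite{Ch1,Ch2} the map $(t,h,\overline{\ww})\mapsto\vSC{t}{h}\overline{\ww}$ is Lipschitz in the $L^1$ norm, uniformly on $\mathfrak D$: there is $L$ with $\|\vSC{t}{h}\ww - \vSC{t}{h}\ww'\|_{L^1}\leq L\|\ww-\ww'\|_{L^1}$ and also $\|\vSC{t}{0}\ww - \ww\|_{L^1}\leq L t$ for $\ww\in\mathfrak D$. (I would phrase the latter just so the final estimate has an explicit modulus.) Along the way one also needs the semigroup property $\vSC{t}{h}\circ\vSC{h}{0}=\vSC{t}{0}$.

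\emph{Step 2: telescoping on a grid of mesh $\tau_{\nu}$.} Write the $L^1$ distance between $\ww_{\nu}(n\tau_{\nu}+,\cdot)$ and $\vSC{n\tau_{\nu}}{0}\overline{\ww}_{\nu}$ as a telescoping sum over $j=1,\dots,n$ of the ``one-step defects''
\[
E_j:=\big\lVert \vSC{n\tau_{\nu}}{j\tau_{\nu}}\ww_{\nu}(j\tau_{\nu}+,\cdot)-\vSC{n\tau_{\nu}}{(j-1)\tau_{\nu}}\ww_{\nu}((j-1)\tau_{\nu}+,\cdot)\big\rVert_{L^1}.
\]
By the semigroup property $\vSC{n\tau_{\nu}}{(j-1)\tau_{\nu}}=\vSC{n\tau_{\nu}}{j\tau_{\nu}}\circ\vSC{j\tau_{\nu}}{(j-1)\tau_{\nu}}$ and the Lipschitz bound of Step~1,
\[
E_j\leq L\,\big\lVert \ww_{\nu}(j\tau_{\nu}+,\cdot)-\vSC{j\tau_{\nu}}{(j-1)\tau_{\nu}}\ww_{\nu}((j-1)\tau_{\nu}+,\cdot)\big\rVert_{L^1}\leq L\,\OL(1)\big(o(\tau_{\nu})+\eps_{\nu}\tau_{\nu}\big),
\]
where the last inequality is exactly~\eqref{E:uniqEst}. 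Summing over the $n\leq T/\tau_{\nu}$ steps gives
\[
\big\lVert \ww_{\nu}(n\tau_{\nu}+,\cdot)-\vSC{n\tau_{\nu}}{0}\overline{\ww}_{\nu}\big\rVert_{L^1}\leq L\,\OL(1)\,\frac{T}{\tau_{\nu}}\big(o(\tau_{\nu})+\eps_{\nu}\tau_{\nu}\big)=\OL(1)\Big(\frac{T\,o(\tau_{\nu})}{\tau_{\nu}}+T\eps_{\nu}\Big)\xrightarrow[\nu\to\infty]{}0,
\]
using $o(\tau_{\nu})/\tau_{\nu}\to 0$ and $\eps_{\nu}\to 0$. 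This gives convergence at the grid times.

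\emph{Step 3: interpolate to all times and pass to $\overline{\uu}$.} For $t\in((n-1)\tau_{\nu},n\tau_{\nu})$ write $\ww_{\nu}(t,\cdot)=\ftS{t}{(n-1)\tau_{\nu}}\ww_{\nu}((n-1)\tau_{\nu}+,\cdot)$, and compare it to $\vSC{t}{0}\overline{\ww}_{\nu}$ via the triangle inequality through $\vSC{t}{(n-1)\tau_{\nu}}\ww_{\nu}((n-1)\tau_{\nu}+,\cdot)$: the first piece is bounded by $\|\ftS{t}{(n-1)\tau_{\nu}}\ww - \vSB{t}{(n-1)\tau_{\nu}}\ww\|_{L^1}+\|\vSB{}{}\ww-\vSC{}{}\ww\|_{L^1}$, controlled by~\eqref{E:ftest1} and~\eqref{E:estVisc} together with $\|\ftS{}{}\ww-\ww\|\leq L\tau_{\nu}$ from~\eqref{E:ftest2}, all $O(\tau_{\nu})$; the second piece is $L$ times the Step~2 bound. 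Hence $\ww_{\nu}(t,\cdot)\to\vSC{t}{0}\overline{\ww}_{\nu}$ uniformly in $t\in[0,T]$, not merely a.e.\ — the a.e.\ qualifier in the statement is simply because the $\ftS{}{}$-interpolation is only defined away from the countably many interaction times, which is harmless. Finally $\|\vSC{t}{0}\overline{\ww}_{\nu}-\vSC{t}{0}\overline{\uu}\|_{L^1}\leq L\|\overline{\ww}_{\nu}-\overline{\uu}\|_{L^1}\to 0$, so $\ww_{\nu}(t,\cdot)\to\vSC{t}{0}\overline{\uu}$.

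\emph{Main obstacle.} The routine part is the telescoping in Step~2; the delicate point is not any single estimate but making sure the hypotheses of Theorem~\ref{T:localConv}, Theorem~\ref{T:localEst}, and the cited results in~\cite{Ch1,Ch2} are mutually compatible on one common domain $\mathfrak D$ — in particular that one can choose $\dss,T$ so that $\mathfrak D(\dss)$ is simultaneously invariant (up to the $+Gt$ enlargement) under the scheme, contained in the domain of the viscous semigroup, and small enough for~\eqref{E:uniqEst} and~\eqref{E:estVisc} to hold with a \emph{uniform} modulus $o(\cdot)$ and uniform Lipschitz constant $L$; and that the approximating sequence $\overline{\ww}_{\nu}$ can be taken inside $\mathfrak D$ with $\Upsilon(\overline{\ww}_{\nu})\leq\dss$ while still converging in $L^1$. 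Once these uniformities are in place the convergence is immediate from the grid-refinement argument above.
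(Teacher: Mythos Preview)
Your proposal is correct and follows essentially the same approach as the paper: both run the Bressan-type telescoping argument (the paper phrases it via the auxiliary function $\Psi_{\nu}(t,\cdot)=\vSC{\bar t}{t}[\ww_{\nu}(t+,\cdot)]-\vSC{\bar t}{0}\overline{\ww}_{\nu}$, you via the explicit one-step defects $E_j$, which is the same sum), use the semigroup property and the $L^1$-Lipschitz continuity of $\vSC{}{}$ from~\cite{Ch1,Ch2} to reduce each term to~\eqref{E:uniqEst}, and sum over $n\leq T/\tau_{\nu}$ steps. Your Step~3 and the discussion of choosing $\overline{\ww}_{\nu}\in\mathfrak D$ are a bit more detailed than the paper's version, but the substance is identical.
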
 
\begin{proof}
\firststep\step{Introduction}
Let $\vSB{t}{h} w$ denote the semigroup of the homogeneous system constructed by vanishing viscosity~\cite{BB} where the `initial datum' is fixed at time $h$ rather than at $h=0$.
We recall~{\color{red}\cite{Ch1,Ch2}} that there exists $L>0$ s.t.~for $\overline{\ww}_{1},\overline{\ww}_{2}\in\mathfrak D$, $h\in[0,T]$, $t_{1},t_{2}\in[h,T]$ then
\bel{E:lipG}
\lVert \vSC{t_{1}}{h}\overline{\ww}_{1}(\cdot)-\vSC{t_{2}}{h}\overline{\ww}_{2}(\cdot)\lVert_{L^{1}} 
\leq 
L \left( \lVert  \overline{\ww}_{1}-  \overline{\ww}_{2}\lVert_{L^{1}} + |t_{2}-t_{1}| \right)
\ee
and for $\overline w\in\mathfrak D$, $0\leq t_{0}\leq  t_{1}\leq  t_{2}\leq T$, one has the semigroup property
\bel{E:sem}
\vSC{t_{2}}{t_{0}}\overline{\ww}\equiv \vSC{t_{2}}{t_{1}}\left[\vSC{t_{1}}{t_{0}}\overline{\ww}\right]
\ee

\step{Strategy} In the spirit of~{\cite[Theorem 2.9]{BressanBook}}, fix any $0\leq \bar t \leq T$ and define the auxiliary function
\[
\Psi_{\nu}(t,\cdot)= \vSC{\bar t}{t}\left[\ww_{\nu}(t+,\cdot)\right]-\vSC{\bar t}{0}\overline{\ww}_{\nu}(\cdot)
\qquad t\in [0,\bar t]
\]
We now prove that $\ww_{\nu}(\bar t, \cdot)$ converges in $L^{1}$ to $\vSC{\overline t}{0}\overline{\ww}$ by showing that the following limit vanishes:
\bas
\lim_{\nu}\lVert \ww_{\nu}(\bar t,\cdot)- \vSC{\bar t}{0}\overline{\ww}(\cdot)\rVert_{L^{1}}
&\leq 
\lim_{\nu}\lVert \ww_{\nu}(\bar t,\cdot)- \vSC{\bar t}{0}\overline{\ww}_{\nu}(\cdot)\rVert_{L^{1}} +\lim_{\nu}\lVert  \vSC{\bar t}{0}\overline{\ww}(\cdot)- \vSC{\bar t}{0}\overline{\ww}_{\nu}(\cdot)\rVert_{L^{1}}
\\
&\equiv \lim_{\nu}\lVert \Psi_{\nu}(\bar t,\cdot)\rVert_{L^{1}} +0 \ .
\eas
The second addend indeed is trivially converging to $0$ by~\eqref{E:lipG} as $\overline{\uu}_{\nu}$ converges to $\overline{\ww}$ in $L^{1}$.

\step{Estimates}
Let $\bar t=n\tau_{\nu}+\hat t$ with $n\in\nat\cup\{0\}$ and $\hat t\in[0,\tau_{\nu})$.
By the triangular inequality
\ba
\lVert \Psi_{\nu}(\bar t,\cdot) \rVert_{L^{1}}
&\equiv\lVert \Psi_{\nu}(\bar t,\cdot) - \Psi_{\nu}(0,\cdot) \rVert_{L^{1}}
\notag
\\
&\leq 
\sum_{k=0}^{n-1} \lVert \Psi_{\nu}((k+1)\tau_{\nu},\cdot) - \Psi_{\nu}(k\tau_{\nu},\cdot) \rVert_{L^{1}}
+\lVert \Psi_{\nu}( \bar t,\cdot) - \Psi_{\nu}(n\tau_{\nu},\cdot) \rVert_{L^{1}}
\notag
\\
&\leq
\sum_{k=0}^{n-1} \lVert \Psi_{\nu}((k+1)\tau_{\nu},\cdot) - \Psi_{\nu}(k\tau_{\nu},\cdot) \rVert_{L^{1}}
+ \OL(1)\tau_{\nu} \ .
\label{E:gwagb}
\ea
In the last step we estimated the norm of
$\Psi_{\nu}( \bar t,\cdot) - \Psi_{\nu}(n\tau_{\nu} ,\cdot) \equiv  \ww_{\nu}(\bar t+,\cdot) - \vSC{\bar t }{n\tau_{\nu}}\left[\ww_{\nu}(n\tau_{\nu} +,\cdot)\right]$ by the Lipschitz continuity~\eqref{E:estVisc}-\eqref{E:ftest1} exploiting the fact that $n\tau_{\nu}\leq\bar t<(n+1)\tau_{\nu}$ and thus by definition $\ww_{\nu}(\bar t,\cdot) \equiv \ftS{\bar t}{n\tau_{\nu}}\ww_{\nu}(n\tau_{\nu} +,\cdot)$.
Moreover, by definition and the semigroup property~\eqref{E:sem}
\bas 
 \lVert& \Psi_{\nu}((k+1)\tau_{\nu},\cdot) - \Psi_{\nu}(k\tau_{\nu},\cdot) \rVert_{L^{1}}
\equiv
 \lVert \vSC{\bar t}{ (k+1)\tau_{\nu}}\left[\ww_{\nu}((k+1)\tau_{\nu}+,\cdot)\right]-\vSC{\bar t}{ k\tau_{\nu}}\left[\ww_{\nu}( k\tau_{\nu}+,\cdot)\right] \rVert_{L^{1}}
 \\
&\equiv
\lVert \vSC{\bar t}{ (k+1)\tau_{\nu}}\left[\ww_{\nu}((k+1)\tau_{\nu}+,\cdot)\right]
-\vSC{\bar t}{(k+1)\tau_{\nu}}\left[\vSC{(k+1)\tau_{\nu}}{ k\tau_{\nu}}\left[\ww_{\nu}( k\tau_{\nu}+,\cdot)\right]\right] \rVert_{L^{1}}
\\
&\stackrel{\eqref{E:lipG}}{\leq}
L\lVert \ww_{\nu}((k+1)\tau_{\nu}+,\cdot)- \vSC{(k+1)\tau_{\nu}}{ k\tau_{\nu}}\left[\ww_{\nu}( k\tau_{\nu}+,\cdot)\right]  \rVert_{L^{1}} \ .
\eas
Estimating this term by~\eqref{E:uniqEst} and plugging this into~\eqref{E:gwagb} we finally deduce, being $n\tau_\nu=T$, that
\bas 
\lVert \Psi_{\nu}(\bar t) \rVert_{L^{1}}
\leq
\OL(1)\tau_{\nu}+\sum_{k=0}^{n-1}\OL(1) \left(o(\tau_{\nu})+\eps_{\nu}\tau_{\nu} \right)
\leq
\OL(1)\tau_{\nu} + T \left(\frac{o(\tau_{\nu})}{\tau_{\nu}}  +\eps_{\nu}   \right) \xrightarrow{\nu\uparrow\infty}0 \ .
\eas
This concludes the proof of the $L^{1}$-convergence.
\end{proof}

\section{Qualitative properties of the entropy solution}
\label{S:qualitative}

\newcommand{\oc}{\overline{c}}
\newcommand{\op}{\overline{p}}
\newcommand{\oq}{\overline{q}}
\newcommand{\og}{\overline{\gamma}}
\newcommand{\ox}{\overline{x}}
\newcommand{\ot}{\overline{t}}
\newcommand{\oP}{\overline{P}}

This section is devoted to the proof of Theorem~\ref{Th:structure} concerning the structure of solutions to balance laws when the characteristic fields are 
\begin{itemize}
\item either linearly degenerate in the sense of Definition~\ref{D:LD},
\item or piecewise-genuinely nonlinear in the sense of Definition~\ref{D:pwGN}.
\end{itemize}
We work under the standard Lipschitz regularity assumption \textbf{(G)} at Page~\pageref{Ass:G} on the source term, and we assume furthermore in this section that $g$ only depends on the state variable:
\[
g=g(u) \ .
\]

The proof is by approximation, following ideas already in~\cite{BreLF,BressanBook,BYu}. 
We construct suitable objects, estimates and arguments on the approximate solutions defined in \S~\ref{sec:PCA}.
Owing to the convergence result proved in \S~\ref{S:convergenceAndExistence}, we are then able to obtain our thesis in the limit.
The section is organized as follows:
\begin{itemize}
\item[\S~\ref{S:balances}] Establishes balances for the positive/negative amount of $i$-waves in a space-time region.
%: {\color{red}{It is needed to say what we mean by positive/negative waves}}
\item[\S~\ref{S:discontinuities}] Defines sub-discontinuities of shocks for each piecewise genuinely nonlinear families.
\item[\S~\ref{Ss:constructionlimitcurves}] Defines the fractional-step approximations of $i$-shocks and $i$-contact discontinuities.
\item[\S~\ref{Ss:pwgnflimit}] Proves the limits in Theorem~\ref{Th:structure} at shocks of piecewise genuinely nonlinear families.
\item[\S~\ref{Ss:rfroionrgnjg}] Proves the limits in Theorem~\ref{Th:structure} at contact discontinuities of linearly degenerate families.
\item[\S~\ref{Ss:continuityPoints}] proves the limits in Theorem~\ref{Th:structure} at continuity points.
\item[\S~\ref{Ss:geomLemmas}] Contains elementary geometric lemmas on piecewise genuinely nonlinear families.
\item[\S~\ref{Ss:auxiliary}] Contains the proof of intuitive auxiliary lemmas.
\end{itemize}

\subsection{Preliminary estimates: balances on characteristic regions}
\label{S:balances}

In this section we generalize balances for the flux of positive and negative waves of a fixed approximation $u_{\nu}$ which was constructed in \S\S~\ref{sec:PCA}-\ref{S:convergenceAndExistence}. These balances reduce to well known ones for the homogeneous system which are for example in~\cite[\S~7.6]{BressanBook}.
%Since our generalization naturally follows by the homogeneous case, we refer to~\cite[\S~7.6]{BressanBook} for more details on the notation we adopt.

We remind~\cite[\S~7.6]{BressanBook} in particular the definition of the interaction measure $\mu_{\nu}^{I}$ and interaction-cancellation measure $\mu_{\nu}^{IC}$: they are purely atomic measures which are concentrated at interaction points of physical fronts belonging to two characteristic families $i$, $j$. If  $\sigma'$ and $\sigma''$ are the incoming strengths of the fronts interacting at a point $P$ then, using the Definition~\ref{def:qi} of amount of interaction, one has
\begin{subequations}
\label{E:muIC}
\ba
&\mu_{\nu}^{I}(\{P\}):={\I}(\sigma^\prime,\,\sigma^{\second}) 
\\
&\mu_{\nu}^{IC}(\{P\}):={\I}(\sigma^\prime,\,\sigma^{\second}) +\begin{cases}  |\sigma'|+|\sigma''|-|\sigma'+\sigma''|  &\text{if $i=j$,}\\ 0 & \text{if $i\neq j$.} \end{cases}
\ea
\end{subequations}
Of course $0\leq \mu_{\nu}^{I}\leq \mu_{\nu}^{IC}$.
We state that the interaction-cancellation measure can be controlled by
$\Q$ even when a source term is present.
\begin{lemma} 
The interaction-cancellation measure satisfies the estimates:
\[
 \mu_{\nu}^{IC}((t_{1},t_{2}]\times\R)
\lesssim
 \TV^{-}\left( \Q(u_{\nu});(t_{1},t_{2}]\right)
 \qquad
\forall \ 0\leq t_{1}< t_{2}\ ,
\]
where $ \TV^{-}$ is the negative total variation, and $\Q$ is the interaction
potential defined at~\eqref{eq:ip}.
In particular, $\mu_{\nu}^{IC}$ is a locally bounded Radon measure.
\end{lemma}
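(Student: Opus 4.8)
The plan is to track the change of the interaction potential $\Q(u_\nu)$ across the two types of events that occur in the fractional–step scheme: the free front–tracking evolution on each interval $((n-1)\tau_\nu,n\tau_\nu)$, and the source step at each time $t=n\tau_\nu$. On the free intervals the behaviour is the classical one: by Proposition~\ref{pro:ie}, whenever two fronts interact at a point $P$ the potential drops by at least $c\,\I(\sigma',\sigma'')$, and an elementary bookkeeping of the strengths (exactly as in~\cite[\S~7.6]{BressanBook}) shows that the extra cancellation term $|\sigma'|+|\sigma''|-|\sigma'+\sigma''|$ appearing in $\mu_\nu^{IC}(\{P\})$ when $i=j$ is also absorbed, up to a constant, in $-\Delta\Q$ at that interaction. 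Hence on each free interval one gets $\mu_\nu^{IC}(\cdot)\lesssim \TV^{-}(\Q(u_\nu);\cdot)$, i.e. the negative variation of $\Q$ controls the interaction–cancellation measure there.

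The point that is genuinely new is the source step. At $t=n\tau_\nu$ no two physical fronts interact, so $\mu_\nu^{IC}$ puts no mass on the time $t=n\tau_\nu$ (the interaction–cancellation measure is, by definition, concentrated at interaction points of physical fronts); the source step only modifies the strengths of the already present fronts and creates the new small fronts at the grid points $j\eps_\nu$. Consequently the source step does not contribute to the left–hand side, while it can only make $\Q$ jump by a controlled amount. Concretely, I would invoke Lemma~\ref{L:timeEst} (and Remark~\ref{R:timeupdate}) to say that $|\Q(u_\nu(n\tau_\nu+,\cdot))-\Q(u_\nu(n\tau_\nu-,\cdot))|\lesssim G^2\tau_\nu$, so that the positive jumps of $\Q$ created by all source steps in $(t_1,t_2]$ sum to $\OL(1)(t_2-t_1)$, which is finite; these jumps therefore do not affect the inequality beyond an additive locally finite term, and in fact can be folded into the statement because $\mu_\nu^{IC}$ ignores those times.

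Putting these together: write $\Q(u_\nu(t,\cdot))$ on $(t_1,t_2]$ as a function of bounded variation whose decreasing part is due to interactions and whose increasing part is due to interactions (none), to the source steps (bounded), and to nothing else. Then
\[
\mu_\nu^{IC}((t_1,t_2]\times\R)\;=\;\sum_{P\in(t_1,t_2]\times\R}\mu_\nu^{IC}(\{P\})\;\lesssim\;\sum_{P}\bigl(-\Delta\Q(P)\bigr)\;\le\;\TV^{-}\bigl(\Q(u_\nu);(t_1,t_2]\bigr),
\]
the sum running over interaction points only. Local boundedness of $\mu_\nu^{IC}$ as a Radon measure follows since $\Q\ge 0$ is bounded on bounded time intervals (by Theorem~\ref{T:localConv}, $\Upsilon(u_\nu(t,\cdot))$, hence $\Q$, stays $\le \dssb+Gt$), so its negative variation on $(t_1,t_2]$ is finite.

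\textbf{Main obstacle.}
The delicate step is verifying that, at an interaction of two fronts of the \emph{same} piecewise–genuinely–nonlinear family, the full quantity $\I(\sigma',\sigma'')+|\sigma'|+|\sigma''|-|\sigma'+\sigma''|$ — not merely $c\,\I(\sigma',\sigma'')$ — is dominated by the drop $-\Delta\Q$ of the potential~\eqref{eq:ip}. This requires re-examining the envelope definitions~\eqref{eq2:qi1}–\eqref{eq2:qi3} of $\I$ and the $\tfrac14\int\!\!\int|\sigma_{x',i}(\tau')-\sigma_{x'',i}(\tau'')|$ term in $\Q$: the cancellation term measures the amount of wave that annihilates, and one must check it is reflected in the loss of the linear part $\sum|s_{x,i}|$ of $\Upsilon$ together with the potential, so that the combined functional still decreases by at least the required amount. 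This is exactly the homogeneous computation of~\cite[\S~7.6]{BressanBook} transplanted to the non-conservative setting via the reduced-flux formalism of \S~\ref{subsec:RP}, and that transplant is where all the care is needed; the source term, by contrast, only enters through the already-proven Lemma~\ref{L:timeEst}.
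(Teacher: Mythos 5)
Your route is the same as the paper's: the paper's proof consists exactly of (i) bounding $\mu_{\nu}^{IC}(\{P\})$ at each interaction point $P$ by the drop of the functional there, quoting Proposition~\ref{pro:ie}, (ii) observing that at the update times $k\tau_{\nu}$ the measure $\mu_{\nu}^{IC}$ carries no mass by construction, and (iii) summing over interaction points in the slab, the local boundedness coming from the uniform smallness of $\Upsilon$ (your appeal to Theorem~\ref{T:localConv}, and your remark via Lemma~\ref{L:timeEst} that the source steps only produce controlled increases, are the same bookkeeping).

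The one step you flag as the ``main obstacle'' is indeed the only real content, and there your plan overstates what Proposition~\ref{pro:ie} gives: it yields $\Delta\Q\leq -c\,\I(\sigma',\sigma'')$, i.e.\ the drop of $\Q$ controls the \emph{interaction} amount, but it does not control the cancellation term $|\sigma'|+|\sigma''|-|\sigma'+\sigma''|$, and the latter cannot be ``absorbed, up to a constant, in $-\Delta\Q$'' in general. For instance, if a rarefaction front and a shock of the same family, with sizes $\approx\pm\eps$ and speeds differing by $O(\eps)$, collide and annihilate while no other fronts are present, the cancellation is of order $\eps$ whereas both the transversal and the same-family terms of $\Q$ in~\eqref{eq:ip} change only by $O(\eps^{3})$. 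The cancellation is in fact paid by the linear part $\mathcal{V}$: since the outgoing $i$-strength equals $\sigma'+\sigma''$ up to $O(\I)$, one has $|\sigma'|+|\sigma''|-|\sigma'+\sigma''|\leq -\Delta\mathcal{V}+O(\I)$, so the quantity dominating $\mu_{\nu}^{IC}(\{P\})$ is the drop of the Glimm functional $\Upsilon=\mathcal{V}+C_{1}\Q$ of Proposition~\ref{pro:ie} --- which is precisely what you half-identify in your obstacle paragraph when you say the cancellation must be read in the loss of $\sum_{x,i}|s_{x,i}|$ together with the potential. The paper's own proof is just as terse on this point (it bounds $\mu_{\nu}^{IC}(\{P\})$ directly by $|\Q(u_{\nu}(t-))-\Q(u_{\nu}(t+))|$), so read with $\Upsilon$ (whose increases at source steps are again $O(\tau_{\nu})$ by Lemma~\ref{L:timeEst}, leaving its negative variation locally finite) in place of $\Q$, your argument and the paper's coincide; as literally written, with $\TV^{-}$ of $\Q$ alone, the per-interaction estimate is justified in neither.
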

\begin{proof}
At an interaction point $P=(t,x)$ by classical interaction estimates, as recalled in Proposition~\ref{pro:ie}
\[
0<\mu_{\nu}^{IC}(P) \lesssim
 \left| \Q(u_{\nu} (t -))-\Q(u_{\nu}(t +))\right| \ .
\] 
At time updates $k\tau_{\nu}$ the interaction cancellation measure is null by construction, thus the thesis holds trivially.
We also recall that we stay in a domain with small total variation.
\end{proof}

%We introduce here a new purely atomic measure $\gamma_{\nu}$ which takes
%into account the presence of the source term: this new measure is concentrated
%at points $P$ where a front crosses an interaction time: if $\sigma$ is the strength
%of the front and $\tau_{\nu}$ is the time-step, then
%\[
%\gamma_{\nu}(\{P\}):=\tau_{\nu} \norm{\nabla g}_{\infty}|\sigma|\ .
%\]
%
Consider a polygonal region $\Gamma$ with edges transversal to the
waves it encounters.
Consider the total amount $W^{\nu i\pm}_{\rin}$, $W^{\nu i\pm}_{\out}$ of
positive and negative $i$-waves entering the region:
\[
W^{\nu i\pm}_{\rin}(\Gamma):=\sum_{\text{entering $\Gamma$} }s_{i}^{\pm} \ ,
\quad
W^{\nu i\pm}_{\out}(\Gamma):=\sum_{\text{exiting $\Gamma$} }s_{i}^{\pm}\ ,
\quad
s_{i}^{\pm}=\max\{\pm s_{i},0\} \ .
\] 
Define the incoming and outgoing flux of the $i$-th wave through the
boundary of the region as
\[
W^{\nu i}_{\rin}=W^{\nu i+}_{\rin}-W^{\nu i-}_{\rin},
\qquad
W^{\nu i}_{\out}=W^{\nu i+}_{\out}-W^{ \nu i-}_{\out}
\qquad i=1,\dots,N.
\]
\begin{lemma}
\label{L:rqgsbb}
There exists a positive constant {$C$ depending only on $A$, $g$, $\dssb$, $T$} such that the following holds.
If $\Gamma\subset[h\tau_{\nu},k\tau_{\nu}]\times\R$, for some $h,k\in\{0,1,\dots,\mathrm{floor}( {T}/{\tau_\nu})\}$, then for $i=1,\dots,N$ one has the estimate
\begin{subequations}
\label{E:bal}
\ba
e^{-(k-h)\tau_{\nu}C}
\left[
W^{\nu i+}_{\rin}-C\mu_{\nu}^{IC}\left(\overline\Gamma\right) 
\right]
\leq & W^{\nu i+}_{\out}
\leq  
e^{(k-h)\tau_{\nu}C}
\left[
W^{\nu i+}_{\rin}+C\mu_{\nu}^{IC}\left(\overline\Gamma\right) 
\right] \ ,
\\
e^{-(k-h)\tau_{\nu}C}
\left[
W^{\nu i-}_{\rin}-C\mu_{\nu}^{IC}\left(\overline\Gamma\right) 
\right]
\leq & W^{\nu i-}_{\out}
\leq  
e^{(k-h)\tau_{\nu}C}
\left[
W^{\nu i-}_{\rin}+C\mu_{\nu}^{IC}\left(\overline\Gamma\right) 
\right]
\ .
\ea
\end{subequations}
\end{lemma}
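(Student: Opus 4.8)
The plan is to reduce the estimate on the polygonal region $\Gamma$ to a Gronwall-type iteration over the time steps $k\tau_\nu$, combining the known homogeneous balance estimates for wave-front tracking (as in~\cite[\S~7.6]{BressanBook}) with the control of the perturbation introduced at each time update by the source term. First I would treat a single \emph{open} time-slab $(n\tau_\nu,(n+1)\tau_\nu)\times\R$ where $u_\nu$ evolves by pure wave-front tracking $\ftS{t}{n\tau_\nu}$. Inside such a slab, the only events are physical and non-physical interactions; the classical balance argument for the homogeneous system shows that the outgoing positive (resp.\ negative) $i$-waves across the portion of $\partial\Gamma$ in this slab differ from the incoming ones by at most $C$ times the interaction--cancellation measure of the sub-region, since creation of new $i$-waves and sign changes are controlled by $\mu_\nu^{IC}$, and non-physical fronts contribute a negligible amount bounded by $\eps_\nu$ (already absorbed in the constant because $\TV$ is small and $\mu_\nu^{IC}$ dominates the cancellation). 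This gives
\[
\Big| W^{\nu i\pm}_{\out} - W^{\nu i\pm}_{\rin}\Big| \le C\,\mu_\nu^{IC}\big(\overline{\Gamma}\cap \text{slab}\big)
\]
for a region contained in one open slab.

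The second step is to handle the time updates at $t=n\tau_\nu$. Here $u_\nu(n\tau_\nu+,\cdot)=u_\nu(n\tau_\nu-,\cdot)+\tau_\nu g_\nu(n\tau_\nu,\cdot,u_\nu(n\tau_\nu-,\cdot))$, and by Remark~\ref{R:timeupdate} each existing jump of strength $\sigma$ is mapped to one of strength $\sigma'$ with $(1-\OL(1)\tau_\nu)|\sigma|\le|\sigma'|\le(1+\OL(1)\tau_\nu)|\sigma|$, with the same sign, while any \emph{new} front created at a discontinuity $j\eps_\nu$ of $g_\nu$ has total strength at most $\tau_\nu\int_{(j-1)\eps_\nu}^{(j+1)\eps_\nu}|\alpha|$. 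Summing the latter over the (finitely many) grid points crossed by $\Gamma$ and using $\alpha\in L^1(\R)$, the new waves contribute $\OL(1)\tau_\nu\|\alpha\|_{L^1}$, which I would absorb into $C\mu_\nu^{IC}(\overline\Gamma)$ only if it is dominated — but it need not be, so in fact this term must be carried along and, since it appears with a factor $\tau_\nu$ at each of the $\le k-h$ steps, it is what eventually builds the exponential prefactor together with the multiplicative distortion $(1\pm\OL(1)\tau_\nu)$ of the surviving strengths. Concretely, passing from $W^{\nu i\pm}$ just before $n\tau_\nu$ to just after multiplies by at most $(1+C\tau_\nu)$ and adds at most $C\tau_\nu\cdot(\text{something bounded})$; I would fold the additive term into the multiplicative one by enlarging the region's $\mu_\nu^{IC}$ budget, or more cleanly by noting $W^{\nu i\pm}_{\rin}\le \mathcal V(u_\nu)\le \dssb+Gt$ is uniformly bounded, so $C\tau_\nu$ times a bounded quantity is again $\le C'\tau_\nu(\dssb+\mu_\nu^{IC})$-type.

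The third step is the iteration: writing $\Gamma$ as the union of its intersections with the slabs $[h\tau_\nu,(h+1)\tau_\nu],\dots,[(k-1)\tau_\nu,k\tau_\nu]$ and tracking the positive $i$-wave flux through the successive horizontal cross-sections, one gets a recursion of the form $W_{m+1}\le (1+C\tau_\nu)\big(W_m + C\,\mu_\nu^{IC}(\overline\Gamma\cap\text{slab}_m)\big)$, and symmetrically from below $W_{m+1}\ge (1-C\tau_\nu)\big(W_m - C\,\mu_\nu^{IC}(\cdots)\big)$. Unrolling over $k-h$ steps, using $(1\pm C\tau_\nu)^{k-h}\le e^{\pm(k-h)\tau_\nu C}$ and the superadditivity of $\mu_\nu^{IC}$ over the disjoint slabs, yields exactly~\eqref{E:bal}; the negative-wave estimate is identical with $s_i^+$ replaced by $s_i^-$. \textbf{The main obstacle} I expect is the bookkeeping at the time updates: one must verify that the source step neither creates nor destroys net positive/negative $i$-wave content beyond the $O(\tau_\nu)$ multiplicative distortion of Remark~\ref{R:timeupdate} plus the $\tau_\nu\|\alpha\|_{L^1}$ creation term — in particular that a jump cannot change sign or change characteristic family under the perturbation $v=u+\tau_\nu g_\nu$, which is where the smallness of $\tau_\nu$ (relative to the lower bound on $|\sigma|$ via strict hyperbolicity and the uniform separation~\eqref{eq:hyp}) and the estimate~\eqref{E:singlejump} are essential. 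Once that is pinned down, the Gronwall unrolling is routine.
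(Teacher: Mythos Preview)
Your approach is essentially the same as the paper's: track $W^{\nu i\pm}$ across interaction times (controlled by $\mu_\nu^{IC}$) and across update times (multiplicative distortion from Remark~\ref{R:timeupdate}), then iterate. The structure and the final Gronwall unrolling are identical.

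However, the ``main obstacle'' you identify --- the new fronts created at grid points $j\eps_\nu$ contributing $\tau_\nu\int|\alpha|$ --- is absent in the setting where this lemma is stated. The preamble of \S~\ref{S:qualitative} assumes $g=g(u)$, so $g_x\equiv 0$, $\alpha\equiv 0$, and the discretization $g_\nu$ in~\eqref{eq:gnu1}--\eqref{eq:gnu2} has no $x$-discontinuities. Consequently, at an update time no new fronts are created; the only effect is that each existing jump of strength $\sigma$ becomes one of strength $\sigma'$ with $|\sigma'-\sigma|\lesssim\tau_\nu|\sigma|$ and the same sign (Remark~\ref{R:timeupdate} with the $\alpha$-term dropped). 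Summing over the jumps inside $\Gamma$ at time $t=n\tau_\nu$ gives directly
\[
\left|W^{\nu i\pm}(t+)-W^{\nu i\pm}(t-)\right|\lesssim \tau_\nu\, W^{\nu i\pm}(t-)\,,
\]
which is a purely multiplicative distortion. This is exactly what the paper records (citing~\cite[(2.8)]{AG2}), and it removes all the bookkeeping you were worried about: there is no additive $\tau_\nu\|\alpha\|_{L^1}$ term to absorb, no question of a jump changing family or sign under the source step (the perturbation is $O(\tau_\nu)$-small relative to the jump itself), and the recursion is simply $W_{m+1}\le(1+C\tau_\nu)W_m + C\mu_\nu^{IC}(\text{slab}_m)$, which unrolls to~\eqref{E:bal} immediately.
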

\begin{proof}
Waves might change only at interaction times and at update times. Denote by
\[
 W^{\nu i+}(t),\quad W^{\nu i-}(t )
 \]
respectively the positive and negative $i$-waves of $u_{\nu}$ present in
$\Gamma$ at time $t$.
\firststep
\step{Interaction times $t$} Denote by $\sigma'$ and $\sigma''$ the incoming
strengths of the fronts interacting at a point $P\in\Gamma$. By interaction
estimates~\cite[Lemma~1]{AMfr}, as in~\cite[(7.98)]{BressanBook}, one has
\[
|W^{\nu i+}(t+)-W^{\nu i+}(t-)|+|W^{\nu i-}(t+)-W^{\nu i-}(t-)|
\lesssim \mu_{\nu}^{IC}(\{P\}) \ .
\]
\step{Update time $t$}
We can assume that no front enters / exits $\Gamma$ at the update time.
Since we are considering $g=g(u)$, then denoting by $\tau_{\nu}$ the time-step,
by Remark~\ref{R:timeupdate} in the construction of the approximation one has
precisely as in~\cite[(2.8)]{AG2} that
\[
\left| W^{\nu i\pm}(t+) -W^{\nu i\pm}(t-)  \right| \lesssim \tau_{\nu}W^{\nu i\pm}(t-)  \ .
\]
In particular, if in the interval $[t_{1},t_{2})$ there is no interaction and no wave enters / exits $\Gamma$, then
\[
e^{-C(k-h)\tau_{\nu}}  W^{\nu i\pm}(t_{1}-) \leq W^{\nu i\pm}(t_{2}-)  \leq  e^{C(k-h)\tau_{\nu}}  W^{\nu i\pm}(t_{1}-)  
\] 
where $h\tau_{\nu} \leq t_{1}<t_{2}\leq k\tau_{\nu}$ for some $h,k\in\nat$.

\step{Conclusion}
Combining in a rough way the estimates in the previous steps, and since
taking into account that waves might enter later than $h\tau_{\nu}$ or might
exit before $k\tau_{\nu}$ the estimate would just be finer, we get the thesis by
standard calculus.
\end{proof}

\subsection{Definition of approximate sub-discontinuity curves}
\label{S:discontinuities}

Assume that the $i$th-characteristic field is piecewise genuinely nonlinear
as in Definition~\ref{D:pwGN}.
Let us directly assume that $\omega_{i}^{1}[u^{-}],\dots,
\omega_{i}^{J_{i}}[u^{-}]$, defined as follows, are monotone increasing:
\[
\omega_{i}^{j}[u^{-}]\ :\ R_{i}[u^{-}](\omega_{i}^{j}[u^{-}]) \in Z_{i}^{j},
\quad  j=1,\dots,J_{i},\qquad
\omega_{i}^{0}[u^{-}]:=-\infty,\ \omega_{i}^{J_{i}+1}[u^{-}]:=+\infty
\]
where we remind that the hyper-surfaces $Z_{i}^{j}$ are the connected components of
\[
Z_{i}=\{u : \nabla\lambda_{i}(u)\cdot r_{i}(u)=0\}
=\bigcup_{j=1}^{J_{i} }Z_{i}^{j}\ .
\]
We directly assume, since the analysis of the other case is perfectly analogous, that
\[
\begin{cases}
\nabla\lambda_{i}(u)\cdot r_{i}(u)<0&\text{if $j$ is even and
$\omega_{i}^{j}[u]<0<\omega_{i}^{j+1}[u]$,} 
\\
\nabla\lambda_{i}(u)\cdot r_{i}(u)>0&\text{if $j$ is odd and
$\omega_{i}^{j}[u]<0<\omega_{i}^{j+1}[u]$}. 
\end{cases}
\] 
\begin{figure}\centering
\begin{tikzpicture}[xscale=2]
      \FPeval{\result}{round(2^0.5,9)}%
      \draw[-stealth,thick] (-3.2,0) -- (3.2,0) node[below] {$u$};
      \draw[thick,dashed] (-\result,-1.7) -- (-\result,1.7) ;%node[below right] {$Z_1$};
      \draw (-\result,-0.4) node[right] {$R[u^{L}](w^{1})$};
      \draw[OliveGreen] (-\result,1.7) node[left] {$Z^{1}$};
    \draw[purple](-\result, -1.7)  node[below] { $u^{1}$};
      \draw[ thick,dashed] (0,-1.7) -- (0,1.7) ;%node[below right] {$Z_2$};
       \draw (0,-0.4) node[right] {$R[u^{L}](w^{2})$};
       \draw[OliveGreen] (0,1.7) node[left] {$Z^{2}$};
    \draw[purple](0, -1.7)  node[below] { $u^{2}$};
     \draw[ thick,dashed] (\result,-1.7) -- (\result,1.7) ;%node[below right] {$Z_3$};
       \draw (\result,-0.3) node[right] {$R[u^{L}](w^{3})$};
       \draw[OliveGreen] (\result,1.7) node[left] {$Z^{3}$};
   \draw[purple](\result, -1.7)  node[below] { $u^{3}$};
      \draw[domain=-2.8:2.8,smooth,variable=\x,thick,blue] plot ({\x},{\x^5/20-\x^3/3});
     \draw[purple,thick] (-2, -32/20+8/3) -- (2.582,0) node[below right] {$ $};
     \draw[dashed,purple](-2, -32/20+8/3)--(-2, -1.7)  node[below] {$u^{R}=u^{0}$};
     \draw[dashed,purple](2.582, 0)--(2.582, -1.7)  node[below] {$u^{4}=u^{L}$};
    \draw[thick,purple](-\result, -1.7)--(0, -1.7)  ;
    \draw[thick,purple](\result, -1.7)--(2.582, -1.7)  ;
   \end{tikzpicture}
   \caption{We consider a jump $[u^{L},u^{R}]$ where $u^{R}=T_{i}[u^{L}](s)$ with $s<0$. We highlight different points belonging to the hyper-surfaces $Z_{i}^{1}$, $Z_{i}^{2}$ and $Z_{i}^{3}$: 
$\bullet$ the points $R_{i}[u^{L}](w^{1})$, $R_{i}[u^{L}](w^{2})$, $R_{i}[u^{L}](w^{3})$ of intersection with the rarefaction curve through $u^{L}$ and
$\bullet$ the points $u^{1} $,  $u^{2} $, $u^{3} $ in~\eqref{E:tauZ} which identify the $(i,3)$ and $(i,1)$ sub-discontinuity fronts $[u^{4},u^{3}]$, $[u^{2},u^{1}]$.
   }%
\end{figure}
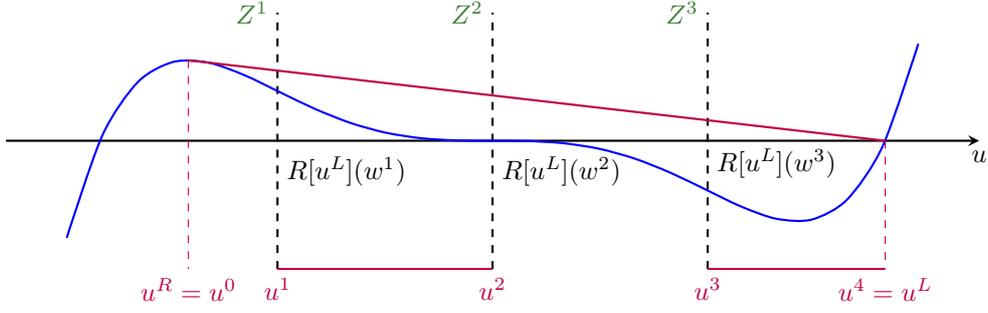
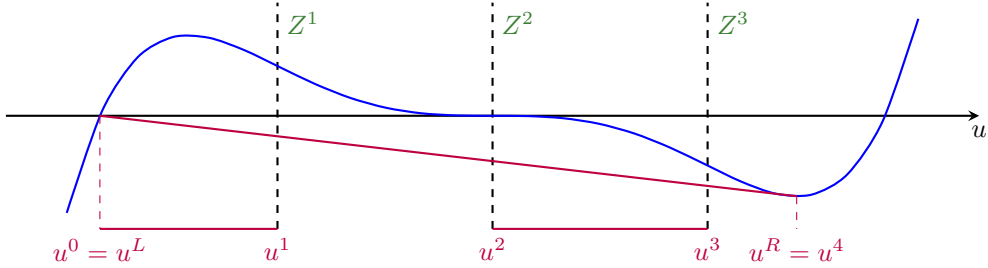
\begin{figure}\centering
\begin{tikzpicture}[xscale=2]
      \FPeval{\result}{round(2^0.5,9)}%
      \draw[-stealth,thick] (-3.2,0) -- (3.2,0) node[below] {$u$};
      \draw[thick,dashed] (-\result,-1.5) -- (-\result,1.5) ;%node[below right] {$Z_1$};
%      \draw (-\result,0) node[above left] {$R[u^{L}](w^{1})$};
      \draw[OliveGreen] (-\result,1.5) node[below right] {$Z^{1}$};
    \draw[purple](-\result, -1.5)  node[below] { $u^{1}$};
      \draw[ thick,dashed] (0,-1.5) -- (0,1.5) ;%node[below right] {$Z_2$};
%       \draw (0,0) node[above right] {$R[u^{L}](w^{2})$};
       \draw[OliveGreen] (0,1.5) node[below right] {$Z^{2}$};
    \draw[purple](0, -1.5)  node[below] { $u^{2}$};
     \draw[ thick,dashed] (\result,-1.5) -- (\result,1.5) ;%node[below right] {$Z_3$};
%       \draw (\result,0) node[above right] {$R[u^{L}](w^{3})$};
       \draw[OliveGreen] (\result,1.5) node[below right] {$Z^{3}$};
   \draw[purple](\result, -1.5)  node[below] { $u^{3}$};
      \draw[domain=-2.8:2.8,smooth,variable=\x,thick,blue] plot ({\x},{\x^5/20-\x^3/3});
     \draw[purple,thick] (2, 32/20-8/3) -- (-2.582,0) node[below right] {$ $};
     \draw[dashed,purple](2, 32/20-8/3)--(2, -1.5)  node[below] {$u^{R}=u^{4}$};
     \draw[dashed,purple](-2.582, 0)--(-2.582, -1.5)  node[below] {$u^{0}=u^{L}$};
    \draw[thick,purple](\result, -1.5)--(0, -1.5)  ;
    \draw[thick,purple](-\result, -1.5)--(-2.582, -1.5)  ;
%       \draw[color=blue] (\result+1,-1.5) node[left] {$\l$};
   \end{tikzpicture}
   \caption{We consider a jump $[u^{L},u^{R}]$ where $u^{R}=T_{i}[u^{L}](s)$ with $s>0$. The $(i,0)$ and $(i,2)$ sub-discontinuities in this case are $[u^{0},u^{1}]$ and $[u^{2},u^{3}]$, where $u^{1} $,  $u^{2} $, $u^{3} $ are defined in~\eqref{E:tauZ}.
   }%
\end{figure}
For these piecewise genuinely nonlinear fields, following~\cite{BreLF, BressanBook, tplrpnpn,BYu} we now define approximate sub-discontinuities of a fixed approximation $u_{\nu}$ which was constructed in \S\S~\ref{sec:PCA}-\ref{S:convergenceAndExistence}.
In particular, we extend~\cite{BYu} in the presence of a Lipschitz continuous source term $g=g(u)$.

Let $[u^{L},u^{R}]$ be a wavefront of $u_{\nu}$ belonging to the $i$-th family, where $u^{R}=T_{i}[u^{L}](s)$.
Suppose for instance that $s>0$, which means $u_{i}^{R}>u_{i}^{L}$.
When the $i$-th field is piecewise genuinely nonlinear one can split $[u^{L},u^{R}]$ into sub-discontinuities: if $\bar u$ is the function defined in~\eqref{E:Rfp} for the construction of the Rieman solver, then since $\tau\mapsto\overline u\left( \tau ;u^{L},s\right)$ is transversal to $Z^{j}_{i}$ there are
\bel{E:tauZ}
0\leq \tau^{j_{1}}< \dots < \tau^{j_{2}}\leq s\quad:\quad u^{j_{1}+k}:= \overline u\left( \tau^{j_{1}+k};u^{L},s\right) \in Z^{j_{1}+k}_{i}\qquad k=0,\dots, j_{2}-j_{1}.
\ee
When $u^{L}$, $u^{R}$ do not belong to any $Z_{i}^{j}$ we still need to include the extremal points: set
\begin{itemize}
\item $\tau^{j_{1}-1}=0$ and $u^{j_{1}-1}=u^{L}$ in case $\tau^{j_{1}}>0$; 
\item $\tau^{j_{2}+1}=s$ and $u^{j_{2}+1}=u^{R}$ in case $\tau^{j_{2}}<s$.
\end{itemize}
If instead $s<0$ the definition is analogous with $0\geq \tau^{j_{2}}> \dots > \tau^{j_{1}}\geq s$, where $j_{2}>\dots>j_{1}$.

\begin{definition}
\label{D:subdisc1}
Suppose that $s^{j}_{i}=\tau^{j+1}-\tau^{j}\neq0$: then we call an $(i,j)$-sub-discontinuity of strength $s^{j}_{i}$ of the $i$-th wavefront $[u^{L},u^{R}]$ of $u_{\nu}$
\begin{itemize}
\item $[u^{j},u^{j+1}]$, if $s>0$ and $j$ is even, or
\item  $[u^{j+1},u^{j}]$, if $s<0$ and $j$ is odd.
\end{itemize}
\end{definition}

Notice that, by definition, the state vector of $(i,j)$-sub-discontinuities belongs to the part of the wavefront where the $i$-th eigenvalue is decreasing.
Rarefaction fronts are instead contained in regions where the $i$-th eigenvalue is increasing across the discontinuities.

One of the reasons to introduce sub-discontinuities when the flux is not genuinely nonlinear, but only piecewise genuinely nonlinear, is that discontinuities might split either at interaction times or at update times. Since the approximate solution of a Riemann problem contains at most one sub discontinuity $s_{i}^{j}$ for $j\in\{0,1,\dots, J_{i}\}$~\cite[Lemma~4.3]{BYu}, sub-discontinuities do not.

The next step is to identify which sub-discontinuities in the approximation $u_{\nu}$ are in the limit converging to a sub-discontinuity of the entropy solution $u$: we call these `surviving' discontinuities ``approximate discontinuities''. We fix for this purpose thresholds $\beta$ and $\beta/2$.

\begin{definition}
\label{D:subdisc}
Let $\beta>0$.
A maximal, leftmost $(\beta,i,j)$-approximate sub-discontinuity curve is any maximal (concerning set inclusion) closed polygonal line---parametrized with time in the $(t,x)$-plane---with nodes $(t_{0},x_{0})$, $(t_{1},x_{1})$, $\dots$, $(t_{n},x_{n})$, where $t_{0}\leq\dots\leq t_{n}$, such that
\begin{enumerate}
\item each node $(t_{k},x_{k})$, $k=1,\dots,n$ is an interaction point or an update time;
\item
\label{definitionSF2}
the segment $[(t_{k-1},x_{k-1}),(t_{k},x_{k})]$ is the support of an $(i,j)$-sub-discontinuity front with strength $|s_{i}^{j}|\geq\beta/4$ and there is at least one time $t\in[t_{0},t_{n}]$ such that $|s_{i}^{j}|\geq\beta$; the index ${j}$ must be either odd if the strength of the $i$-th jump $s_{i}>0$ or ${j}$ must be even if $s_{i}<0$;
\item 
\label{definitionSF3}
it stays on the left of any other polygonal line it intersects and having the above properties.
\end{enumerate}
\end{definition}

We write an interaction estimate for sub-discontinuities in order to familiarize with them.

\begin{lemma}
\label{E:knglnj}
For any compact $K\subset\Omega\subseteq\real^N$ there exist constants $C_{1}, C_{2}$ and $\chi_{1}$ so that:
Consider an interaction between a $i$-front strength $|s_{i}|$ and a $j$-front of strength $|s_{j}|$ for $i\neq j$ and $i,j\in\{1,\dots,N\}$.
Let $u^{L}$ / $u^{R}$ denote the left / right states of $u_\nu$ at that interaction, which belong to $K$, and let
$s_{1}^{+},\dots, s_{N}^{+}$ be the outgoing strengths, so that
\[
u^{R}=T_{N}[T_{N-1}[\dots T_{2}[T_{1}[u^{L}](s_{1}^{+})]
(s_{2}^{+})\dots](s_{N-1}^{+})](s_{N}^{+})\,.
\]
Then, calling $\oP$ the point of interaction, there holds
\[
|s_{i}^{+}-s_{i}|+|s_{j}^{+}-s_{j}|+\sum_{\ell\neq i,j}|s_{\ell}^{+}| \leq C_{1}\I(s_{i},s_{j})
\leq C_{1}|s_{i}s_{j}|= C_{1}\mu_{\nu}^{I}(\oP) \ .
\]
Moreover, consider at each node $\oP$ which is a point of interaction the strengths $s_{i}^{k-}$ / $s_{i}^{k+}$ of each incoming / outgoing
$(i,k)$-sub-discontinuity, possibly except for the first one $(t_{0},x_{0})$: they satisfy
\be
\label{E:grergrr}
|s_{i}^{k+}-s_{i}^{k-}|\leq \frac{C_{2}}{\beta}\mu^{I}_{\nu}(\oP) \ .
\ee
\end{lemma}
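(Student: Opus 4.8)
The plan is to obtain both estimates from the classical two‑wave interaction estimate of~\cite[Lemma~1]{AMfr} (the non‑conservative counterpart of~\cite[\S~7.7]{BressanBook}, \cite{BB,srp}), combined with the definitions~\eqref{eq2:intdf}, \eqref{E:muIC}, and, for the sub‑discontinuities, with the transversality of the manifolds $Z_{i}^{k}$ built into Definition~\ref{D:pwGN}. For the estimate on the full wave sizes: since $i\neq j$, in the Riemann problem resolved at $\oP$ only the $i$‑th and $j$‑th incoming sizes are nonzero (the incoming states being $u^{M}=T_{i}[u^{L}](s_{i})$, $u^{R}=T_{j}[u^{M}](s_{j})$, or the arrangement with $i$ and $j$ interchanged according to which of the two interacting fronts is the left one), and~\cite[Lemma~1]{AMfr} gives at once
\[
|s_{i}^{+}-s_{i}|+|s_{j}^{+}-s_{j}|+\sum_{\ell\neq i,j}|s_{\ell}^{+}|\;\leq\; C_{1}\,\I(s_{i},s_{j})\,,
\]
with $C_{1}$ depending only on bounds for $A$ and its derivatives on the compact $K$. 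As $i\neq j$, \eqref{eq2:intdf} of Definition~\ref{def:qi} yields $\I(s_{i},s_{j})=|s_{i}s_{j}|$, and by~\eqref{E:muIC} this equals $\mu_{\nu}^{I}(\oP)$; this is the first chain of inequalities.

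For~\eqref{E:grergrr} I would isolate the $i$‑wave and feed into it two consequences of the previous step: $|s_{i}^{+}-s_{i}|\leq C_{1}|s_{i}s_{j}|$ and $|u^{L}_{\out}-u^{L}_{\rin}|\leq C|s_{j}|$, where $u^{L}_{\rin}$ and $u^{L}_{\out}$ are the left states of the incoming and of the outgoing $i$‑wave. The second inequality follows by inspecting the map $\Phi$ of~\eqref{eq2:RP1}: the only incoming or outgoing wave separating $u^{L}_{\rin}$ from $u^{L}_{\out}$ whose size is not already $\OL(1)|s_{i}s_{j}|$ is the $j$‑wave, of size $|s_{j}|+\OL(1)|s_{i}s_{j}|$ (one checks this separately in the two collision geometries $i>j$ and $i<j$). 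By~\cite[Lemma~4.3]{BYu} each Riemann fan carries at most one $(i,k)$‑sub‑discontinuity for every $k$, so $s_{i}^{k-}$ and $s_{i}^{k+}$ are well defined, and each is produced, along the respective $i$‑wave $[v,T_{i}[v](s)]$, as $s_{i}^{k}=\tau^{k+1}-\tau^{k}$, the gap between the parameters at which the fixed‑point curve $\tau\mapsto\overline{u}(\tau;v,s)$ of~\eqref{E:Rfp} meets $Z_{i}^{k}$ and $Z_{i}^{k+1}$. Since $\overline{u}(\,\cdot\,;v,s)$ is Lipschitz jointly in its arguments with $\tau$‑derivative close to $r_{i}$, while the manifolds $Z_{i}^{k}$, being disjoint and transversal to $r_{i}$, are uniformly separated and uniformly transversal on $K$, the implicit function theorem shows that, on the region where the $(i,k)$‑sub‑discontinuity is present with strength bounded away from $0$, the map $(v,s)\mapsto s_{i}^{k}=:\Sigma_{i}^{k}(v,s)$ is Lipschitz with a constant depending only on $K$.

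Putting these together, at any node $\oP$ of a $(\beta,i,k)$‑approximate sub‑discontinuity curve other than its first one the $(i,k)$‑sub‑discontinuity is present on the incoming side with $|s_{i}^{k-}|\geq\beta/4$, hence
\[
|s_{i}^{k+}-s_{i}^{k-}|=\big|\Sigma_{i}^{k}(u^{L}_{\out},s_{i}^{+})-\Sigma_{i}^{k}(u^{L}_{\rin},s_{i})\big|\leq L\big(|u^{L}_{\out}-u^{L}_{\rin}|+|s_{i}^{+}-s_{i}|\big)\leq C_{2}'\,|s_{j}|\,.
\]
The factor $1/\beta$ then appears because the $i$‑wave contains a sub‑discontinuity of strength $\geq\beta/4$, so $|s_{i}|\geq\beta/4$ and
\[
|s_{j}|=\frac{|s_{i}s_{j}|}{|s_{i}|}\leq\frac{4}{\beta}\,|s_{i}s_{j}|=\frac{4}{\beta}\,\mu_{\nu}^{I}(\oP)\,,
\]
whence $|s_{i}^{k+}-s_{i}^{k-}|\leq\frac{4C_{2}'}{\beta}\,\mu_{\nu}^{I}(\oP)$, i.e.~\eqref{E:grergrr} with $C_{2}=4C_{2}'$.

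I expect the only genuine difficulty to be the uniform Lipschitz regularity of $\Sigma_{i}^{k}$: one must exclude that, as $(v,s)$ varies, the $(i,k)$‑sub‑discontinuity merges with an adjacent rarefaction or with a neighbouring sub‑discontinuity, or is created or annihilated — events that would destroy the continuity of $\Sigma_{i}^{k}$ and account for the exclusion of the first node. The hypothesis $|s_{i}^{k}|\geq\beta/4$ confines us to the region where $\overline{u}(\,\cdot\,;v,s)$ crosses both $Z_{i}^{k}$ and $Z_{i}^{k+1}$ transversally with a gap bounded below, so the relevant crossing times stay away from the endpoints $0,s$ of the parameter interval and from one another; combined with the fixed and, on $K$, uniformly separated and uniformly transversal manifolds $Z_{i}^{\cdot}$, this yields the stated uniform bound. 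Everything else — the reduction of the left‑state displacement to $\OL(1)|s_{j}|$ from the explicit form of $\Phi$, and the $i>j$ versus $i<j$ bookkeeping — is routine and already present, in the homogeneous case, in~\cite{BYu,BressanBook}.
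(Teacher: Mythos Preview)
Your proposal is correct and follows essentially the same approach as the paper: the first chain of inequalities comes directly from the classical interaction estimate~\cite[Lemma~1]{AMfr} together with $\I(s_i,s_j)=|s_is_j|$ for $i\neq j$, and~\eqref{E:grergrr} from the Lipschitz dependence of the $(i,k)$-sub-discontinuity strength on the data of the $i$-front, the bound $\OL(1)|s_j|$ on the variation of those data across the interaction, and the inequality $|s_j|\leq 4|s_is_j|/\beta$ coming from $|s_i|\geq|s_i^{k-}|\geq\beta/4$. Your discussion of the Lipschitz regularity of $\Sigma_i^k$ via transversality and the implicit function theorem, and of why the lower bound $\beta/4$ keeps you in the good region, spells out in detail what the paper simply asserts.
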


\begin{proof}
By classical interaction estimates, and the definition of the interaction measure, we just need to prove the last inequality concerning sub-discontinuities: the first part of the statement indeed is just by construction.
The proof of~\eqref{E:grergrr} is a consequence of the fact that the strength $s_{i}^{k}$ of any $(i,k)$ sub-discontinuity is Lipschitz continuous in the left and right states of the $i$-front, together with the estimates below.
If ${u'}^{L}$ and ${u'}^{R}$ are the left / right states of the $i$-front after the interaction then
\[
|{u'}^{L}-{u}^{L}|+|{u'}^{R}-{u'}^{R}|
\lesssim 
|s_{j}| \ .
\]
Since $|s_{i}|\geq|s_{i}^{k}|\geq\beta/4$ by Definition \ref{D:subdisc} of $(i,k)$-sub-discontinuity, and since $ \mu^{I}_{\nu}(\oP)= |s_{i}s_{j}|$ due to the fact that $i\neq j$, then $|s_{j}|\leq  \frac{|s_{i}|}{ \beta/4}|s_{j}|\leq \frac{4\mu^{I}_{\nu}(\oP)}{ \beta}$, from which we get the thesis.
\end{proof}

We are now able to determine a countable family $\J^{j}_{\beta,i}(\nu)$ of maximal, leftmost $(\beta,i,j)$-approximate sub-discontinuity curves of $u_{\nu}$ which will in the limit define the family of curves $\J$ in the statement of Theorem~\ref{Th:structure}.
This is due to the fact that when $\beta$ is fixed then the cardinality of maximal, leftmost $(\beta,i,j)$-approximate sub-discontinuity curves of $u_{\nu}$ is, definitively as $\nu\uparrow\infty$, bounded by a constant $M_{\beta,i,j}$ independent of $\nu$ thanks again to the bounds on the total variation---and of course up to a fixed finite time.
Notice that the set of curves $\J^{j}_{\beta,i}(\nu)$ enriches as $\beta\downarrow0$.
\begin{lemma}
\label{L:numFronts}
When the threshold $\beta$ is fixed, then the cardinality
$\sharp \J^{j}_{\beta,i}(\nu)=:M_{\beta,i,j}(\nu)$ of maximal, leftmost
$(\beta,i,j)$-approximate sub-discontinuity curves---up to any fixed positive
time---is uniformly bounded in $\nu$, and thus also in $i=1,\dots,N$ and $j$:
it is of order $M_{\beta,i,j}(\nu)\lesssim \beta^{-2}$.
\end{lemma}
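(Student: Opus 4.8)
The goal is to bound the number $M_{\beta,i,j}(\nu)$ of maximal, leftmost $(\beta,i,j)$-approximate sub-discontinuity curves of $u_\nu$, uniformly in $\nu$, by $O(\beta^{-2})$. The strategy is the classical one (as in~\cite[Chapter~10]{BressanBook} and~\cite{BYu}): each such curve must, at least once in its lifetime, carry a sub-discontinuity of strength $\geq\beta$, while to be interrupted (i.e.\ to have its strength drop below $\beta/4$, or to be forced to terminate by the leftmost rule or by a merging) it must lose at least a fixed fraction of its strength, and every such loss is charged against the interaction-cancellation measure $\mu_\nu^{IC}$, which is uniformly bounded by Lemma above (controlled by $\TV^-(\Q(u_\nu))$, hence by $\OL(1)$ since we stay in the small-total-variation domain). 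First I would fix the compact set $K\supseteq\Graph(u_\nu)$ (legitimate by the uniform $\BV$ bounds) and invoke Lemma~\ref{E:knglnj}: at any interaction point $\oP$ crossed by an $(i,j)$-sub-discontinuity of strength $\geq\beta/4$, the jump in its strength satisfies $|s_i^{j+}-s_i^{j-}|\leq\frac{C_2}{\beta}\mu_\nu^I(\oP)$; at an update time, by Remark~\ref{R:timeupdate} and the fact that $g=g(u)$, the relative change is only $\OL(1)\tau_\nu$, so the strength is multiplied by a factor in $[1-\OL(1)\tau_\nu,1+\OL(1)\tau_\nu]$ and over a fixed time horizon $[0,T]$ this contributes only a bounded multiplicative distortion, no net loss that needs charging.

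Next I would set up the bookkeeping. A curve in $\J^j_{\beta,i}(\nu)$ exists because somewhere it has strength $\geq\beta$; by Definition~\ref{D:subdisc} it survives only while strength stays $\geq\beta/4$, so between the time it has strength $\geq\beta$ and the time it dies it must lose strength at least $\beta-\beta/4=3\beta/4$ (accounting for the update-time distortion, at least $\tfrac12\beta$ once $\tau_\nu$ is small), unless it dies by merging into another surviving curve on its left. Each unit of strength loss at an interaction node $\oP$ costs $\geq\frac{\beta}{C_2}\mu_\nu^I(\oP)$ of interaction measure; summing, the total interaction measure spent on non-merging terminations of $(\beta,i,j)$-curves is at least $(\text{number of such curves})\cdot\frac{\beta}{C_2}\cdot\frac12\beta$, i.e.\ $\gtrsim \beta^2 \cdot (\#\text{curves})$. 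Since $\mu_\nu^I\leq\mu_\nu^{IC}((0,T]\times\R)\leq\OL(1)$ uniformly, this already gives $\#\{\text{curves terminating by strength loss}\}\lesssim\beta^{-2}$. For the curves that terminate by merging with another surviving leftmost curve, I would argue that at a merging the two sub-discontinuities of the same $(i,j)$-type add up, so the merged strength is the sum; one counts mergings through a standard argument — a surviving curve can absorb others only finitely often before itself being charged, or one simply notes each merging reduces the number of distinct surviving curves by one and hence the total number of mergings is bounded by the number of curves that are ever created, which is in turn bounded by (initial curves) $+$ (curves born at interactions), both controlled by $\mu_\nu^{IC}/\beta^2$ plus the bounded number of initial fronts. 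Finally, new $(\beta,i,j)$-curves can only be \emph{born} at an interaction where an $(i,j)$-sub-discontinuity of strength $\geq\beta$ is created; by Lemma~\ref{E:knglnj} the created strength of spurious sub-discontinuities at $\oP$ is $\lesssim\I(s_i,s_j)\cdot\frac1\beta$ (again the $\beta^{-1}$ from the Lipschitz-in-states estimate when normalizing by the surviving part), so birth of a size-$\beta$ curve again costs $\gtrsim\beta^2$ of $\mu_\nu^{IC}$, bounding the number of births by $\OL(1)\beta^{-2}$.

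Assembling: $M_{\beta,i,j}(\nu)\leq (\#\text{initial }(i,j)\text{-fronts of size}\geq\beta) + (\#\text{births}) \lesssim \OL(1) + \OL(1)\beta^{-2} \lesssim \beta^{-2}$, with the implicit constant depending only on $A,g,\dssb,T$ and not on $\nu$ (nor, as the statement notes, on $i,j$, since there are only $N$ families and finitely many $j\leq J_i$). I would remark that the bound holds ``definitively as $\nu\uparrow\infty$'' because the above requires $\tau_\nu$ small enough that the update-time multiplicative distortion over $[0,T]$ does not erode more than half of the threshold margin.

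\textbf{Main obstacle.} The delicate point is the interplay between the leftmost/maximality rule of Definition~\ref{D:subdisc} and the charging argument: when a leftmost curve is forced to stop because another qualifying polygonal crosses it from the left, no interaction measure is necessarily spent at that instant, so this termination must be charged not to $\mu^{IC}$ but combinatorially — by showing that each such crossing strictly decreases a counter (e.g.\ the number of currently-alive $(\beta,i,j)$-curves, or their left-to-right ordering type) whose total decrease is controlled by the number of births. Making this combinatorial accounting airtight — ensuring no curve is double-counted and that mergings/crossings genuinely reduce the relevant counter — is where one must be careful; everything else reduces to the interaction estimates of Lemma~\ref{E:knglnj} and the boundedness of $\mu_\nu^{IC}$ already established, exactly parallel to~\cite[Lemma~4.3 and its corollaries]{BYu} with the only new ingredient being the harmless update-time distortion from the source term.
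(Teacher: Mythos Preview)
Your birth--death accounting would work for the homogeneous system, but it breaks at the step you flag as harmless: the update-time distortion from the source. You invoke Remark~\ref{R:timeupdate} to say that the \emph{sub-discontinuity} strength $|s_i^j|$ is multiplied at each update by a factor in $[1-\OL(1)\tau_\nu,1+\OL(1)\tau_\nu]$, hence over $[0,T]$ by a bounded constant. But Remark~\ref{R:timeupdate} gives that multiplicative estimate only for the \emph{full} jump strength $|s_i|$; the component $s_i^j=\tau^{j+1}-\tau^j$ is determined by where the elementary curve through $u^L$ meets the fixed hypersurfaces $Z_i^j,Z_i^{j+1}$, and an update moves the left/right states by $O(\tau_\nu)$ each, so by the implicit function theorem $\tau^j$ and $\tau^{j+1}$ each shift by $O(\tau_\nu)$ with no reason to cancel (the normals to $Z_i^j$ and $Z_i^{j+1}$ differ). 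The correct estimate is therefore the \emph{absolute} one the paper records, $|s_i^j(t_k+)-s_i^j(t_k-)|\leq C\tau_\nu|s_i|\leq C\tau_\nu\oV$, and summing over the $T/\tau_\nu$ updates yields a total drift $C\oV T$ independent of $\tau_\nu$. As soon as $\beta$ is below this fixed number, a curve can decay from strength $\beta$ to below $\beta/4$ through source updates alone, with zero $\mu_\nu^{IC}$ charged---so your charging argument gives no bound whatsoever.

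The paper repairs this with a time-slicing that your scheme lacks: partition $[0,\oT]$ into $K=\lceil 8\oT C\oV/\beta\rceil$ intervals of length $\ot$, so that the source drift within one slice is at most $C\oV\ot\leq\beta/8$. Any curve living strictly inside a slice must then lose at least $3\beta/4-\beta/8$ to genuine interactions, which via the $C_2/\beta$ estimate costs $\gtrsim\beta^2$ of $\mu_\nu^{IC}$; since the curves are pairwise \emph{disjoint} (this is exactly Point~\ref{definitionSF3} of Definition~\ref{D:subdisc}, so the merging/crossing combinatorics you identify as the main obstacle simply do not arise), there are at most $O(\mu^{IC}/\beta^2)$ of these. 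Curves that cross some slice time are counted directly: at each of the $K=O(\beta^{-1})$ slice times there are at most $O(\oV/\beta)$ disjoint curves of strength $\geq\beta/4$, giving the other $O(\beta^{-2})$ contribution. Two further gaps in your sketch: Lemma~\ref{E:knglnj} and its $C_2/\beta$ bound are stated only for interactions between \emph{different} families; same-family interactions need the separate case analysis (whether the endpoints of the $i$-front lie on $Z_i^j$, $Z_i^{j+1}$) that the paper carries out to reach~\eqref{E:iicount}, and you do not address this.
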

\begin{proof}
Suppose that---up to the fixed time $\oT$ we are considering---the total variation
of every $u_{\nu}$ is less than $\oV$, which is possible by
Theorem~\ref{T:localConv}.

We begin fixing notations. Fix any admissible triple of indices $\beta$, $i$, $j$.
Consider a maximal, leftmost $(\beta,i,j)$-approximate sub-discontinuity
curve $\og$ of $u_{\nu}$ for $\nu$ large enough: denote 
\begin{itemize}
\item by $|s_{i}^{j}(t)|$ the
strength of the $(\beta,i,j)$-approximate sub-discontinuity $\og$ of $u_{\nu}$, 
\item by $\oP_{k}=(\ot_{k},\ox_{k})$ the nodes of the $(\beta,i,j)$-approximate sub-discontinuity $\og$ of $u_{\nu}$, and
\item by $|s_{i}(t)|$ the strength of the whole discontinuity $\og$ of $u_{\nu}$, for $\ot_{0}<t<\ot_{n}$.
\end{itemize}
Suppose $\beta<\frac{1}{1000}\mathrm{dist}( Z_{i}^{j}, Z_{i}^{j+1})$ and
$ \varepsilon_{\nu}\ll10\beta\ll1$: this is allowed since $ \varepsilon_{\nu}\downarrow0$ and since decreasing $\beta$ increases the number
of maximal, leftmost $(\beta,i,j)$-approximate sub-discontinuity curves.
Fix for example $s_{i}^{j}>0$ for notational convenience, the other case being similar.

Before proving Lemma \ref{L:numFronts}, we remind relevant estimates at nodes:
\begin{itemize}
\item
By Lemma~\ref{E:knglnj} if $\og$ interacts with an $i$-front of a different
characteristic family
\ba
\label{E:grgrwgewgregr}
|s_{i}^{j}(\ot_{k}+)-s_{i}^{j}(\ot_{k}-)|\leq \frac{C_{2}}{\beta}\mu^{IC}_{\nu}(\oP_{k}) \ .
\ea
\item
At any update time $\ot_{k}$, denoting by $C$ the constant given by Remark~\ref{R:timeupdate}, one has
\ba
\label{E:egqrgregergregre}
|s_{i}^{j}(\ot_{k}+)-s_{i}^{j}(\ot_{k}-)| %\leq|s_{i}^{}(\ot_{k}+)-s_{i}^{}(\ot_{k}-)| 
< 
C \tau_{\nu }|s_{i}(\ot_{k}-)| < C \tau_{\nu }\oV \ . 
\ea
\item
Let $u^{L}$ / $u^{R}$ denote the left and right value of the maximal, leftmost $(\beta,i,j)$-approximate sub-discontinuity
curve $\og$ that we are considering.
We make preliminary observations concerning interactions among $i$-waves, since we are interested only in the strength of the $j$-th component, before providing complete estimates:
\begin{enumerate}
\item Suppose that $u^{R}\in Z_{i}^{j+1}$ and $u^{L}\in Z_{i}^{j} $ between $\ot_{k-1}$ and $\ot_{k}$. Then: $\bullet$ In case $\og$ interacts at $\oP_{k}$ with another $i$-shock then we have that $u^{R}\in Z_{i}^{j+1}$ and $u^{L}\in Z_{i}^{j} $ also for $\ot_{k}< t< \ot_{k+1}$. If the fixed $\beta$ is smaller than $\frac{1}{1000}\mathrm{dist}( Z_{i}^{j}, Z_{i}^{j+1})$, this proves that the strength of the $(\beta,i,j)$-approximate sub-discontinuity is more than $\beta$ at both times $\ot_{k}^{-}$ and $\ot_{k}^{+}$.
 $\bullet$ In case $\og$ interacts at $\oP_{k}$ with an $i$-rarefaction, since the strength of rarefactions is vanishingly small again the strength of the $(\beta,i,j)$-approximate sub-discontinuity is more than $\beta$ at both times $\ot_{k}^{-}$ and $\ot_{k}^{+}$ owing to the condition $\beta<\frac{1}{1000}\mathrm{dist}( Z_{i}^{j}, Z_{i}^{j+1})$.
\item If $u^{R}\notin Z_{i}^{j+1} $ for $\ot_{k-1}<t<\ot_{k}$, then $u^{R}=u_{\nu}(t,\og(t)+)$ is the terminal value of the $i$-jump in $\og(\ot_{k}-)$.
In particular, if $\og$ interacts at $\oP_{k}$ with another $i$-front, by the classical analysis of interactions roughly
\begin{subequations}
\bas
&s_{i}^{j}(\ot_{k}+)\geq s_{i}^{j}(\ot_{k}-)
&&\text{(interactions with shocks)}
\\
s_{i}^{j}(\ot_{k}-) -\mu^{IC}(\oP_{k})\leq&s_{i}^{j}(\ot_{k}+)\leq s_{i}^{j}(\ot_{k}-) 
&&\text{(interactions with rarefactions)}
\eas
\end{subequations}
 \end{enumerate}
 which more precisely becomes
 \ba
 \label{E:iicount}
 s_{i}^{j}(\ot_{k}-)-s_{i}^{j}(\ot_{k}+)
 \lesssim \mu^{IC}(\oP_{k}) \ .
 \ea
\end{itemize}

We can start now with the principal argument.
Collecting estimates~\eqref{E:grgrwgewgregr},~\eqref{E:egqrgregergregre},~\eqref{E:iicount} at nodes,
if $u_{\nu}^{\pm}(\og)$ between times $q$ and $t>q$ is valued strictly between $Z_{i}^{j}$ and $Z_{i}^{j+1}$ we obtain
 \begin{equation}
\label{E:rag}
 s_{i}^{j}(q)-s_{i}^{j}(t+)
 \leq  \frac{C_{2}\mu^{IC}_{\nu}\left(\left\{(r,z)\ :\ r\in(q,t]\ ,\ z=\og (r) \right\}\right)}{\beta} +C\oV (t-q+\tau_{\nu}) \ .
\end{equation}

We are now able to estimate the number of maximal, leftmost $(\beta,i,j)$-approximate sub-discontinuity curves.
Fix before intermediate times $k\ot$ where
\begin{equation}
\label{E:gargrgre}
k=0,\dots, K\ ,\quad
K:=\mathrm{ceil}\left(\frac{8\oT C\oV}{\beta}\right) \ ,
\qquad
\ot=\frac{\oT}{K}
\quad\Rightarrow\quad
\ot C\oV\leq\frac{\beta}{8}\ .
\end{equation}
Point~\ref{definitionSF3} in Definition~\ref{D:subdisc} requires that different maximal, leftmost $(\beta,i,j)$-approximate sub-discontinuity curves---when the triple is fixed---are disjoint. This disjointness yields that:
\begin{itemize}
\item At time $k\ot$ by Point~\ref{definitionSF2} in Definition~\ref{D:subdisc} there are at most $\mathrm{ceil}(4\oV/\beta)$ many of them.

\item Those  $(\beta,i,j)$-approximate sub-discontinuity curves whose interaction-cancellation measure is more than $\frac{\beta^{2}}{8C_{2}}$ are at most $\mathrm{ceil}\left(\frac{8C_{2}\mu^{IC}([0,T])\times\R)}{\beta^{2}}\right)$ by sub-additivity of measures.

\item The strength of the $j$-th component of those $(\beta,i,j)$-approximate sub-discontinuity curves $\og$ which are defined strictly between times $k\ot$ and $(k+1)\ot$ increases from a value less than $\beta/4$ at $k\ot$ to a value at least $\beta$ at some $\hat t\in (k\ot,(k+1)\ot)$, and decreases to a value less than $\beta/4$ at $(k+1)\ot$: then in some subinterval of $\left(\hat t, (k+1)\ot\right)$ estimate~\eqref{E:rag} yields
\[
\frac{3\beta}{4}< \frac{C_{2}\mu^{IC}_{\nu}(\{(r,z) \ :\  z=\og(r) \} )}{\beta} +C\oV\ot
\qquad\stackrel{\eqref{E:gargrgre}}{\Rightarrow}\qquad
\mu^{IC}_{\nu}(\{(r,z) \ :\  z=\og(r) \} )> \frac{\beta^{2}}{8C_{2}} \ .
\]
\end{itemize}
We thus estimate the number of $(\beta,i,j)$-approximate sub-discontinuity curves up to time $\oT$ by
\[
\mathrm{ceil}\left(\frac{4\oV}{\beta}\right)\cdot \mathrm{ceil}\left(\oT\frac{8C\oV}{\beta}\right)+\mathrm{ceil}\left(\frac{8C_{2}\mu^{IC}([0,T])\times\R)}{\beta^{2}}\right) 
%+\mathrm{ceil}\left(\frac{16\mu^{IC}([0,T])\times\R)}{c(\beta)\beta^{2}}\right)\ .
\qedhere\]
\end{proof}
 
%:
%: GLOBAL STRUCTURE
%:
\subsection{Proof of the global structure of solutions}

The proof of the global structure of solutions stated in Theorem~\ref{Th:structure} above proceeds distinguishing the case of piecewise-genuinely nonlinear fields and the case of linearly degenerate fields.
The reason is not only that estimates are different in the two cases, but really the geometry of the approximation of shocks and of contact discontinuities with a family of jumps in $u_{\nu}$ is qualitatively different.
For each case, we will then have sub-cases extending the analysis in~\cite{BressanBook,BYu}.

We proceed in the next subsections with the core of the proof.

\subsubsection{Constructing the exceptional sets and limit discontinuity curves}
\label{Ss:constructionlimitcurves}
Fix a sequence $\varepsilon_{\nu}\downarrow0$ and let $u_{\nu}$ be the approximate solution of the Cauchy problem~\eqref{eq:sysnc}-\eqref{eq:inda} constructed in \S\S~\ref{sec:PCA}-\ref{S:convergenceAndExistence}.
By possibly extracting a subsequence, we can assume that the interaction and interaction-cancellation measures~\eqref{E:muIC} converge weakly* to some nonnegative measures $\mu^{I}$ and $\mu^{IC}$:
\[
\mu^{I}_{\nu}
\rightharpoonup
\mu^{I}_{}\geq0\ ,
\qquad
\mu^{IC}_{\nu}
\rightharpoonup
\mu^{IC}\geq0 \ .
\]
Of course $\mu^{IC}$ might change changing the sequence $\{u_{\nu}\}_{\nu}$.
Define now the exceptional sets
\bel{E:exceptionalSet}
\Theta_{0}:=\left\{(0,x)\quad:\quad \overline{u}(x+)\neq  \overline{u}(x-)\right\}\ ,
\qquad
\Theta_{1}:=\left\{(t,x)\quad:\quad  \mu^{IC}(\{(t,x)\})>0\right\}
\ee

\paragraph{Piecewise genuinely nonlinear fields}
Fix a threshold $\beta>0$. Suppose the $i$-th field is piecewise genuinely nonlinear. Let 
\[
\J_{\beta,i}(\nu) := \left\{ \gamma_{\nu,ik}^{} \right\}_{k=1}^{M_{\beta}}
\] 
be the family of all maximal, leftmost $(\beta,i,j)$-approximate sub-discontinuity curves in the approximate solution $u_{\nu}$ defined in \S~\ref{S:discontinuities}, if needed with repetitions of the curves. This enumeration is possible as the number these curves is uniformly bounded in $i$, $j$, $\nu$ by Lemma~\ref{L:numFronts}.

Suppose $\gamma_{\nu,ik}:(t^{-}_{\nu,ik},t_{\nu,ik}^{+})\to\R$ belong to some $\J_{\beta,i}(\nu)$ where $i,k$ are fixed, with the $i$-th family piecewise genuinely nonlinear.
One can then assume that $t_{\nu,ik}^{-}$ and $t_{\nu,ik}^{+}$ converge to $t^{-}_{ik}$ and $t^{+}_{ik}$, respectively, and that the curves $\gamma_{\nu,ik}$ converge locally uniformly on $(t^{-}_{ik} ,t ^{+}_{ik})$ to some curve $\gamma_{ik}$ by Ascoli-Arzela theorem as $\nu\uparrow\infty$.
Denote the family of such limit curves, which possibly contain repetitions, by
\[
\J_{\beta, i} := \left\{ \gamma_{ik}^{} \right\}_{k=1,\dots,M_{\beta}}  \ .
\]

\paragraph{Linearly degenerate fields}
Suppose the $i$-th field is linearly degenerate: we follow the construction in~\cite[Page~221]{BressanBook}, that we repeat for completeness.
Call $x_{i,\nu}(t;\overline t,\overline x)$ the $i$-th characteristic curve through the point $(\overline t,\overline x)$, defined by
\[
\dot x_{i,\nu}(t;\overline t,\overline x)=\lambda_{i}(u_{\nu}(t,x_{i,\nu}(t;\overline t,\overline x))) \ ,
\quad  x_{i,\nu}(\overline t;\overline t,\overline x)=\overline x \ .
\]
This is allowed by linear degeneracy of the $i$-th field.
Up to extracting a subsequence, one can assume that 
\[
x_{i,\nu}(t;\overline t,\overline x) \quad\to\quad x_{i}(t;\overline t,\overline x)
\qquad\text{as $\nu\uparrow\infty$.}
\]
Denote by $\overline\mu_{\nu}^{i\pm}$ the measures of the positive and negative $i$-waves in the approximation ${u}_{\nu}(0+,\cdot)$ of the initial datum $\overline{u}$ and by $\overline{\mu}^{i\pm}$ the measures of the positive and negative $i$-waves in the initial datum $\overline{u}$.
As $\nu\uparrow\infty$, by suitably choosing $\overline u_{\nu}$ we can assume the weak*-convergence
\[
\overline\mu_{\nu}^{i\pm}
\rightharpoonup
\overline{\mu}_{}^{i\pm} \ .
\]
Define the measure of the total amount of positive and negative $i$-waves on $u(0,\cdot)$ present in an interval $(a,b)$ plus the total amount of interaction and cancellation occurring in the corresponding forward strip as
\[
\mu^{i*}_{\nu}((a,b)):=\overline{\mu}_{}^{i+}((a,b))+\overline{\mu}_{}^{i-}((a,b))+\mu^{IC}\left(\{(t,x) \ :\ t>0,\ x_{i,\nu}(t;0,a)<x<x_{i,\nu}(t;0,b)\}\right) \ .
\]
We finally define for positive times the family of \emph{at most} countably many curves
\begin{subequations}
\label{E:contactdisccurves}
\begin{equation}
\J_{i}=\{\gamma_{i m}:=x_{i}(\cdot ;0,\xi_{im}) \quad:\quad \mu^{i*}(\{\xi_{im}\})>0 \ ,\quad m\in\nat\} 
\end{equation}
where $ \mu^{i*}$ are constructed as follows.
By upper semicontinuity properties of $w^{*}$-convergence of nonnegative measures, we define
points
\begin{equation}\label{E:dfggfddffe}
\xi_{\nu,im}^{\pm}\to\xi_{\nu,im}
\quad\text{such that}\quad
\mu^{i*}_{\nu}\left([\xi_{\nu,im}^{-},\xi_{\nu,im}^{+}]\right) \xrightarrow{\nu\uparrow\infty} \mu^{i*}_{}\left(\{\xi_{im}\}\right) 
\quad\text{as $\nu\uparrow\infty$}\quad
\end{equation}
and consequently approximating regions $\{\gamma_{\nu,i m}^{-}(t)\leq x\leq \gamma_{\nu,i m}^{+}(t)\}$ where the curves $\gamma_{\nu,i m}^{\pm}$ are
\begin{equation}
\J_{i}^{-}(\nu):=\left\{\gamma_{\nu,i m}^{-}:=x_{i,\nu}\left(\cdot ;0,\xi_{\nu,im}^{-}\right) \right\}\ ,
\quad
\J_{i}^{+}(\nu):=\left\{\gamma_{\nu,i m}^{-}:=x_{i,\nu}\left(\cdot ;0,\xi_{\nu,im}^{+}\right) \right\}
\ .
\end{equation}
\end{subequations}
Notice that we can also assume that $\gamma_{\nu,i m}^{-}$ and $\gamma_{\nu,i m}^{+}$ converge to $\gamma_{i m}$ locally uniformly.
We stress that, differently from the case of shocks for piecewise genuinely nonlinear fields, contact discontinuities in $u$ are not necessarily approximated by discontinuities in $u_{\nu}$ with strength definitively above a fixed threshold: they might be approximated by an increasing number of small discontinuities of $u_{\nu}$ in a region shrinking to the limit curve.

\paragraph{Exceptional set and family of limit curves}
Denote by $\Theta_{2}$ the subset of the plane where two limit curves $ \gamma_{ik}$, $ \gamma_{i'k'}$ belonging to different characteristic families $i\neq i'$ cross each-other:
\[
\Theta_{2} = \left\{\left(\ot,\ox\right)\ :\ \exists i,i',k,k'\quad i\neq i'\ |\ \ox= \gamma_{ik}\left(\ot\right)= \gamma_{i'k'}\left(\ot\right)\right\}
\]
We list $\Theta_{2}$ separately for being more explicit, but by Lemma~\ref{L:distinctf} below one proves $\Theta_{2}\subset\Theta_{1}$.
Define the exceptional set $\Theta$ and the family of curves $\J$ in the statement of Theorem~\ref{Th:structure} as
\[
\Theta:=\Theta_{0}\cup \Theta_{1} \cup \Theta_{2} \ ,
\qquad
\J:=\bigcup_{\substack{i\text{ linearly}\\\text{degenerate}}}\J_{i}\cup  \bigcup_{\substack{i\text{ pw genuinely}\\\text{nonlinear}}}\cup_{n\in\nat}\J_{\frac{1}{n},i} 
\ ,
\]
where $\Theta_{0}$, $ \Theta_{1} $, $ \Theta_{2}$ and $\J_{i}$, $\J_{\beta,i}$ are defined just above.

\subsubsection{Jumps: Piecewise-genuinely nonlinear fields}
\label{Ss:pwgnflimit}

Let $\oP=(\ot,\ox)\notin\Theta$ be a point along a curve $\og\in \J_{\beta}$ of a piecewise-genuinely nonlinear family $i$: since $u_{\nu}$ is converging to $u$ in $L^{1}$ and by construction of these limit curves, in $\oP$ there must be a jump of the limit function $u$ of strength at least $\beta/4$---see Definition~\ref{D:subdisc}.
We prove below the sided limits of $u$ at $\oP$ stated in Theorem~\ref{Th:structure}, while instead the slope and entropy condition of the jump can be then deduced precisely following~\cite[Step~7, Page 227]{BressanBook}, therefore we omit the proof here.

\paragraph{Simple jumps} Let $\oP=(\ot,\ox)$ be a point along a simple $i$-shock curve $\og\in \J_{\beta}$ of a piecewise-genuinely nonlinear family such that $\mu^{IC}(\{\oP\})=0$.
Even if we do not need it, we remind that by the tame oscillation condition---see~\cite[Lemma~2.3]{Ch2} jointly with Theorem~\ref{T:localEst}---one can define
\[
u^{L} = \lim_{\substack{ (t,x)\to(\ot,\ox)\\ \ot\leq t \leq \ot+(\ox-x)/\hat\lambda}} u(t,x) \ .
\]
Let $\og_{\nu}$ be a maximal, leftmost $(\beta,i,j)$-approximate sub-discontinuity curve converging to $\og$.

Suppose that Theorem~\ref{Th:structure} fails: equivalently, suppose by contradiction that
\[
\lim_{r\downarrow0}\limsup_{\nu\uparrow\infty} \left( \sup_{\substack{ |t-\ot|+|x-\ox|\leq r \\ x<\og_{\nu}}} \left| u_{\nu}(t,x)-u^{L} \right|\right) 
>
\varepsilon
> 0 \ .
\]
By possibly extracting a subsequence and supposing that we suitably normalize the pointwise representative of the solution, we directly assume there are points $P_{\nu}=(\ot_{\nu},\op_{\nu})$ and $Q_{\nu}=(\ot_{\nu},\oq_{\nu})$ on the left of $\oP$, i.e.~satisfying $\op_{\nu}<\oq_{\nu}<\og_{\nu}(\ot_{\nu})$, such that
\[
Q_{\nu}\xrightarrow{\nu\uparrow\infty} \oP\ ,
\qquad
|u_{\nu}(Q_{\nu})-u^{L}|\geq\varepsilon \quad\forall\nu,
\]\[
P_{\nu}\xrightarrow{\nu\uparrow\infty} \oP\ ,
\qquad
|u_{\nu}(P_{\nu})-u^{L}|\xrightarrow{\nu\uparrow\infty}0\ . \quad
\]
The segment $P_{\nu}Q_{\nu}$ must then be crossed by a large amount of waves.
As in~\cite{BressanBook,BYu} we show below that in any neighborhood of $\oP$ these waves either interact among themselves or with $\og_{\nu}$. As a result, $\mu^{IC}(\{\oP\})$ cannot vanish and therefore we reach an absurd.
Two cases are possible:
%: FIGURA i j
%:

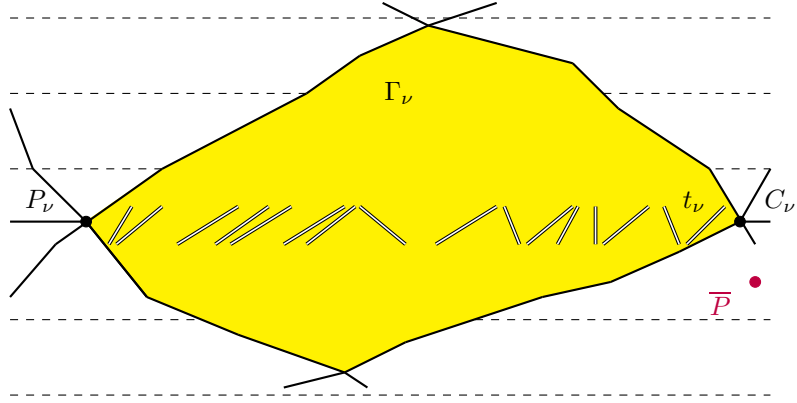
\begin{figure}[ht!]\centering
  \begin{tikzpicture}%[xscale=1.7]
\draw[ thick] (0,.3) -- (10,.3);

\draw (0.4,.3) node[ above ,font=\normalsize] {$P_{\nu}$};
\draw (9.8,.3)  node[ above right ,font=\normalsize] {$C_{\nu}$};
\draw (9.6,-.5) node[ below left ,font=\normalsize,purple] {$\oP$};
\filldraw[purple] (9.8,-.5) circle(2pt);

%: TRATTEGGI
%:
%:
\draw[ dashed] (0,1) -- (10,1);
\draw[ dashed] (0,2) -- (10,2);
\draw[ dashed] (0,3) -- (10,3);
\draw[ dashed] (0,-1) -- (10,-1);
\draw[ dashed] (0,-2) -- (10,-2);

%: AREA
%:
%:
\filldraw[yellow] (
(1,.3) -- (1.8,-.7)  -- (3,-1.2) -- (4.4,-1.7) -- (5.2,-1.3) -- (7,-.7) -- (7.90,-.5) -- (8.8,-.1) -- ( 9.6,.3) %
-- (9.2,1) -- (8,1.8) -- (7.4,2.4) -- (5.5,2.9) -- (4.6,2.5) -- (3.9,2) -- (2,1) -- cycle;

%: LINEE SOPRA
%:
%:
\draw[ thick]  (0,-.7) -- (0.6,0);
\draw[ thick] (0.6,0) -- (2,1);
\draw[ thick] (2,1) -- (3.9,2);
\draw[ thick] (3.9,2) -- (4.6,2.5);
\draw[ thick] (4.6,2.5) -- (5.5,2.9);
\draw[ thick] (4.9,3.2) -- (5.5,2.9);
\draw[ thick] (6.4,3.2) -- (5.5,2.9);
\draw[ thick] (5.5,2.9) -- (7.4,2.4);
\draw[ thick] (7.4,2.4) -- (8,1.8);
\draw[ thick] (8,1.8) -- (9.2,1);
\draw[ thick] (9.2,1) -- (9.8,0);
%
%: LINEE SOTTO
%:
%:
\draw[ thick] (0,1.8) -- (.3,1);
\draw[ thick] (.3,1) -- (1,.3);
\draw[ thick] (1,.3) -- (1.8,-.7);
\draw[ thick] (1,.3) -- (1.8,-.7);
\draw[ thick] (1.8,-.7) -- (3,-1.2);
\draw[ thick] (3,-1.2) -- (4.4,-1.7);
\draw[ thick] (3.6,-1.9) -- (4.4,-1.7); %altre
\draw[ thick] (4.7,-1.9) -- (4.4,-1.7); %altre
\draw[ thick] (4.4,-1.7) -- (5.2,-1.3);
\draw[ thick] (5.2,-1.3) -- (7,-.7);
\draw[ thick] (7,-.7) -- (7.90,-.5);
\draw[ thick] (7.90,-.5) -- (8.8,-.1);
\draw[ thick] (8.8,-.1) -- ( 9.6,.3);
\draw[ thick] ( 9.6,.3) -- (10,1);

%: LINEE INTERNE
%:
%:
\draw[ double ] (1.3,0) -- (1.6,.5);
\draw[  double] (1.4,0) -- (2,.5);
\draw[double  ] (2.2,0) -- (3,.5);
\draw[double  ] (2.7,0) -- (3.4,.5);
\draw[double  ] (2.9,0) -- (3.7,.5);
\draw[double  ] (3.6,0) -- (4.4,.5);
\draw[double  ] (3.9,0) -- (4.54,.5);
\draw[  double] (5.2,0) -- (4.6,.5);
\draw[double  ] (5.6,0) -- (6.4,.5);
\draw[double  ] (6.7,0) -- (6.5,.5);
\draw[double  ] (6.8,0) -- (7.4,.5);
\draw[double  ] (7.2,0) -- (7.47,.5);
\draw[double  ] (7.7,0) -- (7.7,.5);
\draw[double  ] (7.8,0) -- (8.4,.5);
\draw[double  ] (8.9,0) -- (9.4,.5);
\draw[double  ] (8.8,0) -- (8.6,.5);

%%: DISTURBO 2
%%:
%%:
%\draw[ double ] (1.9,2.2) -- (2.3,2.5);

\draw (4.8,2) node[right,font=\normalsize] {$\Gamma_{\nu}$};
\filldraw (9.6,.3) circle(2pt);
\filldraw (1,.3) circle(2pt);
\draw (9,.3) node[above,font=\normalsize] {$t_{\nu}$};
\end{tikzpicture}

\caption{Lemma~\ref{L:distinctf}: there is a large amount of waves in at least two distinct families.}
\label{fig:regioneDist}
\end{figure}
\paragraph{Case 1 (Figure~\ref{fig:regioneDist})}
Each segment $P_{\nu}Q_{\nu}$ is crossed by a fixed amount of $k$-waves in $u_{\nu}$ for some fixed $k\neq i$. % and $\varepsilon>0$.
Replacing $Q_{\nu}$ by the point $\left(\ot_{\nu}, \og_{\nu}(\ot_{\nu})+c_{\nu}\right)$, for a suitable $c_{\nu}\downarrow0$, Lemma~\ref{L:distinctf} below applies with the families $i$ and $k$ and it contradicts the assumption $\mu^{IC}(\{\oP\})=0$, i.e.~$\oP\notin\Theta_{2}$, because there is a positive amount of interactions among different families.

\paragraph{Case 2 (Figure~\ref{fig:regione1})}
Suppose that
\begin{itemize}
\item the $j$-th component of $i$-fronts in $u_{\nu}$ other than $\og_{\nu}$, crossing each segment $P_{\nu}Q_{\nu}$ has total strength more than $\varepsilon$ but 
\item in any region $\overline\Gamma_{\nu}$ shrinking to $\oP$ when ${\nu}\uparrow\infty$ for all $k\neq i$ the amount of $k$-waves converges to $0$ and 
\item in any region $\overline\Gamma_{\nu}$ shrinking to $\oP$ also the maximum strength of the $j$-th component of $i$-fronts other than $\og_{\nu}$ is vanishingly small.
\end{itemize}
Lemma~\ref{L:samef} below then applies yielding that there is a uniformly positive amount of interaction-cancellation in the $i$-th family in $\oP$: it contradicts the assumption $\oP\notin\Theta_{2}$, since this would imply $\mu^{IC}(\{\oP\})=0$, and therefore it ends the proof.

%
%: FIGURA i i
%:
%
\begin{figure}\centering
\hfill
  \begin{tikzpicture}
\draw[ thick] (0,.3) -- (5,.3);

\draw (4.5,.3) node[above,font=\normalsize] {$t_{\nu}$};
\draw (4.6,4.7) node[left,font=\normalsize] {$\ot_{\nu}$};
\draw (0.3,.3) node[ above ,font=\normalsize] {$P_{\nu}$};
\draw (2.4,.3)  node[ above right ,font=\normalsize] {$C_{\nu}$};
\draw(1.3,1.5)  node[ left  ,font=\normalsize] {$\gamma^{\ell}_{\nu}$};
\draw(2.9,1.5)  node[ right  ,font=\normalsize] {$\gamma^{r}_{\nu}$};
\draw (2.7,-.5) node[ above ,font=\normalsize,purple] {$\oP$};
\filldraw[purple] (2.3,-.5) circle(2pt);

%: TRATTEGGI
%:
%:
\draw[ dashed] (0,1) -- (5,1);
\draw[ dashed] (0,2) -- (5,2);
\draw[ dashed] (0,3) -- (5,3);
\draw[ dashed] (0,4) -- (5,4);
\draw[ dashed] (0,5) -- (5,5);

%: AREA
%:
%:
\filldraw[yellow] (0.3,0) -- (1,1)  -- (1.95,2) -- (2.3,2.5) -- (3.5,3.7) -- (4,4) -- (4.9,4.8) %
			-- (4.4,4)  -- (3.95,3.5) -- (3.6,3) -- (3.2,2) -- (2.5,1) -- (2.1,0) -- cycle;

%: LINEE SINISTRE
%:
%:
\draw[ thick] (0.3,0) -- (1,1);
\draw[ thick] (1,1) -- (1.95,2);
\draw[ thick] (1.95,2) -- (2.3,2.5);
\draw[ thick] (2.3,2.5) -- (2.75,3);
\draw[ thick] (2.75,3) -- (3.5,3.7);
\draw[ thick] (3.5,3.7) -- (4,4);
\draw[ thick] (4,4) -- ( 4.9,4.8);

%: LINEE DESTRE
%:
%:
\draw[ thick] (2.1,0) -- (2.51,1);
\draw[ thick] (2.5,1) -- (3.2,2);
\draw[ thick] (3.2,2) -- (3.6,3);
\draw[ thick] (3.6,3) -- (3.95,3.5);
\draw[ thick] (3.95,3.5) -- (4.4,4);
\draw[ thick] (4.4,4) -- ( 4.9,4.8);

%: LINEE INTERNE
%:
%:
\draw[ double ] (.4,0) -- (.8,.5);
\draw[  double] (.7,0) -- (1,.5);
\draw[  double] (.9,0) -- (1.2,.5);
\draw[double  ] (1.1,0) -- (1.5,.5);
\draw[double  ] (1.4,0) -- (1.7,.5);
\draw[double  ] (1.55,0) -- (1.85,.5);
\draw[double  ] (1.8,0) -- ( 2.2,.5);

%: DISTURBO 3
%:
%:
\draw[ double ] (4.1,3.3) -- (3.95,3.5);
\draw[ double ] (3.95,3.5) -- (3.5,3.7);
\draw[ double ] (3.5,3.7) -- (3,4);

%: DISTURBO 2
%:
%:
\draw[ double ] (1.9,2.2) -- (2.3,2.5);

\draw (2.4,2) node[right,font=\normalsize] {$\Gamma_{\nu}$};
\filldraw (2.21,.3) circle(2pt);
\filldraw (.51,.3) circle(2pt);
\end{tikzpicture}
%
%: SECONDA FIGURA
%:
%:
\hfill
 \begin{tikzpicture}
\draw[ thick] (0,-.3) -- (-5,-.3);

\draw (-4.5,-.3) node[below,font=\normalsize] {$t_{\nu}$};
\draw (-4.6,-4.7) node[right,font=\normalsize] {$\ot_{\nu}$};
\draw (-0.3,-.3) node[ below ,font=\normalsize] {$C_{\nu}$};
\draw (-2.4,-.3)  node[ below left ,font=\normalsize] {$P_{\nu}$};
\draw(-1.4,-1.5)  node[ right  ,font=\normalsize] {$\gamma^{r}_{\nu}$};
\draw(-2.9,-1.5)  node[ left  ,font=\normalsize] {$\gamma^{\ell}_{\nu}$};
\draw (-.3,.3) node[  ,font=\normalsize,purple] {$\oP$};
\filldraw[purple] (-.05,.5) circle(2pt);

%: TRATTEGGI
%:
%:
\draw[ dashed] (0,-1) -- (-5,-1);
\draw[ dashed] (0,-2) -- (-5,-2);
\draw[ dashed] (0,-3) -- (-5,-3);
\draw[ dashed] (0,-4) -- (-5,-4);
\draw[ dashed] (0,-5) -- (-5,-5);

%: AREA
%:
%:
\filldraw[yellow] (-0.3,0) -- (-1,-1)  -- (-1.95,-2) -- (-2.3,-2.5) -- (-3.5,-3.7) -- (-4,-4) -- (-4.9,-4.8) %
			-- (-4.4,-4)  -- (-3.95,-3.5) -- (-3.6,-3) -- (-3.2,-2) -- (-2.5,-1) -- (-2.1,0) -- cycle;

%: LINEE SINISTRE
%:
%:
\draw[ thick] (-0.3,0) -- (-1,-1);
\draw[ thick] (-1,-1) -- (-1.95,-2);
\draw[ thick] (-1.95,-2) -- (-2.3,-2.5);
\draw[ thick] (-2.3,-2.5) -- (-2.75,-3);
\draw[ thick] (-2.75,-3) -- (-3.5,-3.7);
\draw[ thick] (-3.5,-3.7) -- (-4,-4);
\draw[ thick] (-4,-4) -- (-4.9,-4.8);

%: LINEE DESTRE
%:
%:
\draw[ thick] (-2.1,0) -- (-2.51,-1);
\draw[ thick] (-2.5,-1) -- (-3.2,-2);
\draw[ thick] (-3.2,-2) -- (-3.6,-3);
\draw[ thick] (-3.6,-3) -- (-3.95,-3.5);
\draw[ thick] (-3.95,-3.5) -- (-4.4,-4);
\draw[ thick] (-4.4,-4) -- (-4.9,-4.8);

%: LINEE INTERNE
%:
%:
\draw[ double ] (-.4,0) -- (-.8,-.5);
\draw[  double] (-.7,0) -- (-1,-.5);
\draw[  double] (-.9,0) -- (-1.2,-.5);
\draw[double  ] (-1.1,0) -- (-1.5,-.5);
\draw[double  ] (-1.4,0) -- (-1.7,-.5);
\draw[double  ] (-1.55,0) -- (-1.85,-.5);
\draw[double  ] (-1.8,0) -- (-2.2,-.5);

%: DISTURBO 3
%:
%:
\draw[ double ] (-4.1,-3.3) -- (-3.95,-3.5);
\draw[ double ] (-3.95,-3.5) -- (-3.5,-3.7);
\draw[ double ] (-3.5,-3.7) -- (-3,-4);

%: DISTURBO 2
%:
%:
\draw[ double ] (-2.5,-3.1) -- (-2.3,-2.5);

\draw (-2.8,-2) node[right,font=\normalsize] {$\Gamma_{\nu}$};
\filldraw (-2.21,-.3) circle(2pt);
\filldraw (-.51,-.3) circle(2pt);

\end{tikzpicture}
\hfill\phantom{,}

\caption{Lemma~\ref{L:samef}. There might be one $(\beta,i,j)$-approximate sub-discontinuity curve merging.}
\label{fig:regione1}
\end{figure}
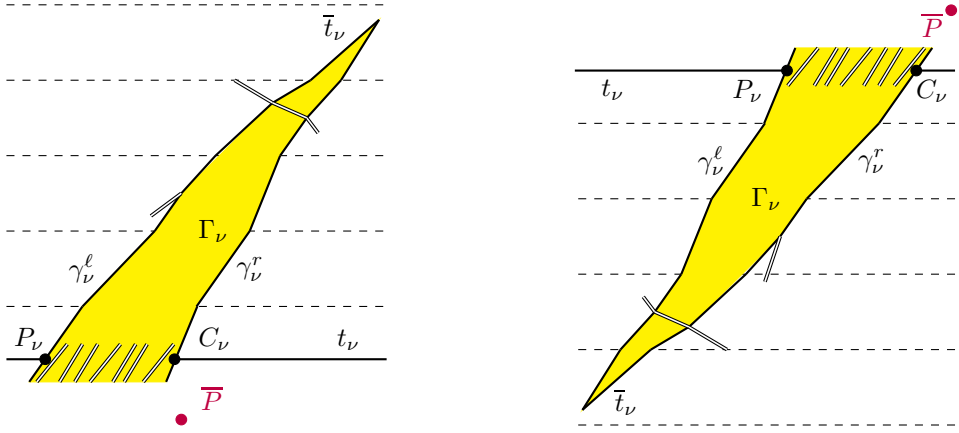

\paragraph{Composite waves} 
If the limit wave $\og$ is composite, then one can apply the argument above to each sub-component of the jump, as in~\cite{BYu}, since Lemma~\ref{L:distinctf} and Lemma~\ref{L:samef} below still apply.
Of course, the limiting value of $u$ depends on the $j$-th component we are considering and therefore on the respective $(t,x)$-region where the limit is taken.

%:
%:
\subsubsection{Jump points: Linearly degenerate fields}
\label{Ss:rfroionrgnjg}

Suppose now the $i$-th characteristic field is linearly degenerate.
Let $\oP=(\ot,\ox)$ be a point along a curve $\og\in \J$ as constructed in~\eqref{E:contactdisccurves} and such that $\mu^{IC}(\{\oP\})=0$. In particular, thus, $\oP\notin\Theta$.
Let moreover $\og_{\nu}^{-}\in \J^{-}(\nu)$ and $\og_{\nu}^{+}\in \J^{+}(\nu)$ be corresponding curves delimitating the approximating region which converge locally uniformly to $\bar \gamma$, again as constructed in~\eqref{E:contactdisccurves}.
Even if we do not need it, we remind that by the tame oscillation condition---see~\cite[Lemma~2.3]{Ch2} jointly with Theorem~\ref{T:localEst}---one can define
\[
u^{L} = \lim_{\substack{ (t,x)\to(\ot,\ox)\\ \ot\leq t \leq \ot+(\ox-x)/\hat\lambda}} u(t,x) \ .
\]
We prove below the sided limits of $u$ at $\oP$ stated in Theorem~\ref{Th:structure}.
We omit here, instead, the proof of the relation $\dot {\overline{y}}(t)=\sigma_{i}(u^{L},u^{R})$ because it can be deduced as in~\cite[Step~7, Page 227]{BressanBook}.

Suppose that Theorem~\ref{Th:structure} fails: equivalently, suppose by contradiction that
\[
\lim_{r\downarrow0}\limsup_{\nu\uparrow\infty} \left( \sup_{\substack{ |t-\ot|+|x-\ox|\leq r \\ x<\og_{\nu}^{-}}} \left| u_{\nu}(t,x)-u^{L} \right|\right) > \varepsilon > 0 \ .
\]
By possibly taking a subsequence and supposing that we suitably normalize the pointwise representative of the solution, we assume there are points $Q_{\nu}=(\ot_{\nu},\oq_{\nu})$ and $P_{\nu}=(\ot_{\nu},\op_{\nu})$ on the left of $\og_{\nu}^{-}$, i.e.~satisfying $\op_{\nu}<\oq_{\nu}<\og_{\nu}^{-}(\ot_{\nu})$, such that $\ot_{\nu}\to\ot$ when $\nu\uparrow\infty$ and
\[
Q_{\nu}\xrightarrow{\nu\uparrow\infty} \oP\ ,
\qquad
|u_{\nu}(Q_{\nu})-u^{L}|\geq\varepsilon\quad\forall\nu,
\]\[
P_{\nu}\xrightarrow{\nu\uparrow\infty} \oP\ ,
\qquad
|u_{\nu}(P_{\nu})-u^{L}|\xrightarrow{\nu\uparrow\infty}0\ . \quad
\]
By the construction of the limit curve for linearly degenerate families of~\S~\ref{Ss:constructionlimitcurves}:
\begin{itemize}
\item For every linearly degenerate field $j\neq i$ the total strength of $j$-waves crossing $P_{\nu}Q_{\nu}$ converges to $0$ as $\nu\uparrow\infty$: see~\cite[(10.76)]{BressanBook} for a full proof.
\item The amount of waves of the $i$-th family crossing $P_{\nu}Q_{\nu}$ is vanishingly small, as a consequence of~\eqref{E:dfggfddffe}, because we fixed $P_{\nu}$ and $Q_{\nu}$ converging to $\oP$ but on the left of $\og_{\nu}^{-}$.
\end{itemize}
Nevertheless, the segment $P_{\nu}Q_{\nu}$ must be crossed by a large amount of waves, which must therefore be of piecewise genuinely nonlinear families.
We show that in any neighborhood of $\oP$ these waves must interact among themselves. 
As a result, $\mu^{IC}(\{\oP\})$ cannot vanish and therefore we reach an absurd.
Two cases are possible:

\paragraph{Case 1 (Figure~\ref{fig:regioneDist})}
There are two distinct indexes $j\neq k$ such that each segment $P_{\nu}Q_{\nu}$ is crossed by a fixed amount of both $j$-waves and $k$-waves in $u_{\nu}$. Lemma~\ref{L:distinctf} below applies with the families $k$ and $j$ and it contradicts the assumption $\mu^{IC}(\{\oP\})=0$ because of interactions among different families.

\paragraph{Case 2 (Figure~\ref{fig:regione1})}
For a singe index $j\in\{1,\dots,n\}$ each segment $P_{\nu}Q_{\nu}$ is crossed by an amount $\geq \varepsilon>0$ of waves of a piecewise genuinely nonlinear family $j$, but for $k\neq j$ the total amount of $k$-waves crossing the segment $P_{\nu}Q_{\nu}$ vanishes as $\nu\to \infty$.
As in the limit there is no $(\beta,j,\ell)$-shock front at $\oP$, then one must have that the maximum strength of $j$-waves crossing the segment $P_{\nu}Q_{\nu}$ vanishes as $\nu\to \infty$ and therefore that the segment $P_{\nu}Q_{\nu}$ is crossed by a large number of small $j$-waves.
Lemma~\ref{L:samef} below applies and it contradicts the assumption $\mu^{IC}(\{\oP\})=0$.

\subsubsection{Continuity points}
\label{Ss:continuityPoints}

Consider now a point $\oP\notin\Theta$ which does not belong to any of the curves in $\J$ constructed in \S~\ref{Ss:constructionlimitcurves}. We prove now that $u$ is continuous at $\oP$, concluding thus the statement of Theorem~\ref{Th:structure}.

Assume by contradiction that $u$ is discontinuous at $\oP$: there exists $\varepsilon>0$ and a space-like segment $P_{\nu} Q_{\nu}$ degenerating to the singe point $\oP$ for which 
\[
u_{\nu}(P_{\nu})\to u(\oP) \ ,
\qquad
|u_{\nu}(Q_{\nu})-u(\oP)|\geq \varepsilon\quad\forall\nu\ .
\]
Two cases are possible.

\paragraph{Case 1 (Figure~\ref{fig:regioneDist})}
There are two distinct indexes $j\neq k$ such that each segment $P_{\nu}Q_{\nu}$ is crossed by a fixed amount of both $j$-waves and $k$-waves in $u_{\nu}$. Lemma~\ref{L:distinctf} below applies with the families $k$ and $j$ and it contradicts the assumption $\mu^{IC}(\{\oP\})=0$ because of interactions among different families.

\paragraph{Case 2 (Figure~\ref{fig:regione1})}
For a singe index $j\in\{1,\dots,n\}$ each segment $P_{\nu}Q_{\nu}$ is crossed by an amount $\geq \varepsilon>0$ of $j$-waves, but for $k\neq j$ the total amount of $k$-waves crossing the segment $P_{\nu}Q_{\nu}$ vanishes.

Suppose the family $j$ is piecewise genuinely nonlinear.
As in the limit there is no $(\beta,j,\ell)$-shock front, then one must have that the maximum strength of $j$-waves crossing the segment $P_{\nu}Q_{\nu}$ vanishes as $\nu\to \infty$ and therefore that the segment $P_{\nu}Q_{\nu}$ is crossed by a large number of small $j$-waves.
Lemma~\ref{L:samef} below applies and it contradicts the assumption $\mu^{IC}(\{\oP\})=0$. 

It is moreover not possible that the family $j$ is linearly degenerate unless $\mu^{j*}_{\nu}(\oP)>0$ see~\cite[(10.76)]{BressanBook} for a full proof.
This contradicts the assumption that $\oP$ is not covered by any curve in $\J$.

%:
%:
%:
%:
\subsubsection{Geometric lemmas on piecewise genuinely nonlinear characteristic fields}
\label{Ss:geomLemmas}

 We now formalize the following intuitive but nontrivial fact. The strength of the $i$-th wave in the solution of the Riemann problem among $v(0)$ (left value) and $v(1)$, for a piecewise constant function $v:[0,1]\to \R^{N}$ whose jumps are mostly in the $i$-th family and small except possibly for one of them, is approximatively the sum of the strengths of the $i$-th waves in its jumps. For example, jumping many times almost along the $i$-th elementary curve, and staying ``close'', if measured along the $i$-th elementary curve, to the initial point, one does not move transversally too much.  

\begin{lemma}
\label{L:acc2}
There exists $C>0$ which satisfies the following.
Let $\rho>0$ and let $v:[0,1]\to \R^{N}$ be a piecewise constant function such that \[Ce^{C\TV(v)}\TV(v)<\dH\] and the following conditions hold:
\begin{itemize}
\item the total strength of waves of families different from $i$, present at any jump, is less than $\rho$;
\item the strength of the single $i$-th wave in the solution of the Riemann problem at each jump, apart from at most one of them, is less than $\rho$.
\end{itemize}
Denote by
\begin{itemize}
\item $s_{i}^{k}$ the strength of the $i$-th wave present in the solution of the Riemann problem relative to the $k$-th jump of $v$;
\item $\sigma_{1},\dots,\sigma_{N}$ the strengths of the outgoing waves relative to the jump among $v(0)$ (left value) and $v(1)$ (right value).
\end{itemize}
We conclude then that
\[
\left|\sigma_{i}-\sum_{k}s_{i}^{k}\right|+\sum_{j\neq i}|\sigma_{j}|\leq C e^{\displaystyle {C\,\TV v}}\cdot (1+\TV v)\cdot\rho \ .
\]
\end{lemma}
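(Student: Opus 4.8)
The plan is to prove the estimate by induction on the number of jumps of $v$, glueing the Riemann fans one at a time and controlling the error that accumulates when we replace the exact composition of the $i$-waves at successive jumps by their algebraic sum. First I would set up the notation: let $0=x_0<x_1<\dots<x_M=1$ be the jump points of $v$, write $w_k:=v(x_k)$, and let $(s^{k}_1,\dots,s^k_N)$ be the Riemann-problem strengths at the $k$-th jump $[w_{k-1},w_k]$, so that $s^k_i$ is the $i$-component quantity in the statement. Build the total solution by resolving $[w_0,w_M]$ directly; the outgoing strengths $\sigma_1,\dots,\sigma_N$ are what we want to compare. The engine is the standard glueing/interaction estimate: when two adjacent Riemann fans are merged, the change in the outgoing $i$-strength is, up to the outgoing strengths of the other families and the already-present interaction quantity, controlled by the amount of interaction, which here (using Definition~\ref{def:qi} and Remark~\ref{rem4:amshocks}, together with Lemma~\ref{E:knglnj}) is bounded by products of strengths. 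The novelty compared with the genuinely nonlinear case of~\cite{BressanBook} is that we must keep track of the fact that at most one jump carries a \emph{large} $i$-wave (size possibly of order $\TV v$), while all the others are smaller than $\rho$; this is exactly the hypothesis that keeps the transversal displacement small, as anticipated in the paragraph before the lemma.

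Concretely I would argue as follows. Process the jumps from left to right, maintaining after step $k$ a solution of the Riemann problem $[w_0,w_k]$ with outgoing strengths $\sigma^{(k)}_1,\dots,\sigma^{(k)}_N$, and prove by induction the bound
\[
\Big|\sigma^{(k)}_i-\sum_{m\le k}s^m_i\Big|+\sum_{j\ne i}|\sigma^{(k)}_j|\le \Phi_k:=C\Big(\textstyle\sum_{m\le k}\rho\,|s^m_i|+\sum_{m\le k}\rho\cdot(\text{strength of non-}i\text{ waves at }m)+\dots\Big),
\]
the right-hand side eventually being summable into $C e^{C\,\TV v}(1+\TV v)\rho$. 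At step $k$ we merge the fan $[w_0,w_{k-1}]$ with the elementary fan $[w_{k-1},w_k]$. The interaction between the $i$-part of the first fan (total $i$-strength $\approx\sum_{m<k}s^m_i$, with at most one big constituent) and the $i$-wave $s^k_i$ of the incoming jump produces, by the interaction estimate of Lemma~\ref{E:knglnj} and the convex/concave-envelope structure of Definition~\ref{def:qi}, a correction to $\sigma^{(k)}_i$ bounded by $C\cdot|s^k_i|\cdot(\text{the strength of all incoming }i\text{-waves except one})\le C\rho\,|s^k_i|$ when $s^k_i$ is small, or by $C\,|s^k_i|\cdot\rho$ when $s^k_i$ is the unique big one but then all previously accumulated $i$-waves are small, so the product is still $\lesssim\rho\cdot(\text{big strength})$; simultaneously the interaction of $s^k_i$ with the non-$i$ waves crossing nearby and of the big $i$-wave with the $k$-th jump's non-$i$ waves (total $<\rho$) contributes at most $C\rho\cdot|s^k_i|+C\rho\cdot(\TV\text{-type quantity})$. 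The exponential factor $e^{C\TV v}$ appears exactly as in the homogeneous wave-interaction estimates (Proposition~\ref{pro:ie}): reabsorbing at each glueing the error into the Glimm-type functionals $\mathcal V+C_1\mathcal Q$, whose total decrease along the whole construction is $\le \mathcal Q(\text{initial})\le C\,\TV(v)^2$, and the multiplicative propagation of errors through $M$ steps is summed into a geometric-type series dominated by $e^{C\,\TV v}$.

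The clean way to organize the bookkeeping is to let $\varepsilon_k$ denote the right-hand side of the claimed bound at stage $k$ and to show $\varepsilon_k\le(1+C q_k)\varepsilon_{k-1}+C\rho(|s^k_i|+q_k)$, where $q_k$ is the strength of the $k$-th incoming jump (or better, the amount of interaction it creates with the previously built fan, which is $\le C|s^k_i|\cdot(\text{existing }i\text{-strength minus its single big wave})+C(\text{existing total strength})\cdot(\text{non-}i\text{ part of jump }k)$). Iterating, $\varepsilon_M\le C\rho\sum_k(|s^k_i|+q_k)\prod_{m>k}(1+Cq_m)\le C\rho\,(1+\TV v)\,\exp(C\sum_m q_m)\le C\rho\,(1+\TV v)\,e^{C\,\TV v}$, since $\sum_m q_m\lesssim\TV v$. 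The smallness hypothesis $Ce^{C\TV v}\TV(v)<\delta_H$ is precisely what keeps all the merged states inside the domain where the interaction estimates of~\cite{AMfr} and Proposition~\ref{pro:ie} are valid throughout the $M$ glueings.

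The main obstacle I anticipate is the case analysis around the single exceptional large $i$-wave: one has to be careful that when the big $i$-wave sits in the already-constructed fan and we glue a new (small) $i$-wave onto it, the interaction quantity is $\lesssim\rho\cdot|\text{big}|$ and \emph{not} $\lesssim|\text{big}|^2$, which forces us to exploit that the amount of interaction between two $i$-waves of the same sign along the same elementary curve is of the form $|s'||s''|\,|\sigma'-\sigma''|$ (Remark~\ref{rem4:amshocks}) and that only the \emph{smaller} factor can be bounded by $\rho$ — but the difference of speeds is itself $O(\TV v)$, which is where the $(1+\TV v)$ factor enters — and that when the big wave is split by the envelope construction into sub-waves its transversal effect is still governed by $\rho$ rather than its own size. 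Translating this envelope picture for the reduced flux $\widetilde F_k$ into the quantitative inequality is the technical heart; everything else is the routine propagation-of-errors induction described above.
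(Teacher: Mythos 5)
Your overall strategy --- induct on the jumps of $v$, glue the elementary fan at the $k$-th jump onto the Riemann fan of $[v(0),v(x_{k-1})]$, and propagate the errors multiplicatively to produce the factor $e^{C\,\TV v}$ --- is the same as the paper's. The paper first reduces to the case in which every jump of $v$ is a pure $i$-jump (the transversal waves, of total size $<\rho$, are absorbed once and for all by classical interaction estimates), and then argues by induction on the number $K$ of jumps, attaching the last jump $s_i^K$ to the fan $(\sigma_1^*,\dots,\sigma_N^*)$ of the first $K-1$ jumps.

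The way you control the error of a single glueing, however, has a genuine gap, exactly at the point you yourself identify as the technical heart. You bound the same-family contribution by a product of $i$-strengths and invoke the ``at most one large wave'' hypothesis, arguing that either the incoming $i$-wave is $\le\rho$, or, if it is the unique large one, ``all previously accumulated $i$-waves are small, so the product is still $\lesssim\rho\cdot(\text{big strength})$''. This does not hold: the previously accumulated $i$-waves are individually small, but the $i$-wave $\sigma_i^*$ of the partial fan is their (approximate) sum and can be of order $\TV(v)$, so when the newly attached wave is the single large one the product $|s_i^K|\,|\sigma_i^*|$ is (large)$\times$(large); the refinement via Remark~\ref{rem4:amshocks} (strengths times speed difference) does not rescue the bound, since the speed difference only contributes another factor of order $\TV(v)$, not a factor $\rho$. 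The paper's induction is organized precisely to avoid this: by the two-Riemann-problem estimate of \cite[Theorem~3.7]{sie}, the error of attaching the single $i$-wave $s_i^K$ is estimated through $\overline C\,|s_i^K|\sum_{j> i}|\sigma^*_j|$, i.e.\ through the \emph{transversal} components of the partial fan only, and it is the induction hypothesis itself, combined with the smallness assumption $Ce^{C\TV(v)}\TV(v)<\dH$ (whose role is exactly this, and not merely to keep the states inside the domain of validity of the interaction estimates, as you assume), that guarantees $\sum_{j\neq i}|\sigma_j^*|\le\rho$; the per-step error is then $\lesssim|s_i^K|\,\rho$ and sums to $Ce^{C\,\TV v}(1+\TV v)\rho$. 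Without this feedback of the inductive bound on the transversal waves into the glueing estimate, your recursion $\varepsilon_k\le(1+Cq_k)\varepsilon_{k-1}+C\rho(|s_i^k|+q_k)$ is unjustified in the configuration just described, and the proof as proposed does not close.
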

\begin{proof}
The thesis holds if we prove it when all jumps of $v$ belong to the $i$-th characteristic family, by classical interaction estimates, since the total strength of waves of all families different from $i$ is assumed to be less than $\rho$.

Suppose therefore that all jumps of $v$ are along the $i$-th elementary waves.
In this case, $s_{i}^{k}$ already denotes the strength of the $k$-th jump. We argue by induction on the number $K$ of jumps of $v$ that, in the absence of waves of other families,
\begin{equation}
\label{E:sarqg}
\left|\sigma_{i}-\sum_{k}s_{i}^{k}\right|+\sum_{j\neq i}|\sigma_{j}|
\leq 
Ce^{\displaystyle {C\,(|s_{i}^{1}|+\dots+|s_{i}^{K}|)}} \cdot(|s_{i}^{1}|+\dots+|s_{i}^{K}|)\cdot\rho \ .
\end{equation}
Of course when $K=1$ the thesis is trivial while if $K=2$ by classical interaction estimates~\cite[Lemma~1]{AMfr}, and since either $|s_{i}^1|\leq\rho$ or $|s_{i}^2|\leq\rho$ by assumption, one has that the thesis holds with $C>C_{2}$ where
\begin{equation*}
\left|\sigma_{i}-(s_{i}^{1}+s_{i}^{2})\right|+\sum_{j\neq i}|\sigma_{j}|
\leq 
C_{1}\I(s_{i}^{1},s_{i}^{2})\leq C_{1}|s_{i}^{1}s_{i}^{2}| 
\leq 
C_{2}e^{\displaystyle {C_{2}\,(|s_{i}^{1}|+ |s_{i}^{2}|)}} \cdot (|s_{i}^{1}|+|s_{i}^{2}|)\cdot\rho \ .
\end{equation*}
Suppose now that estimate~\eqref{E:sarqg} holds if $v$ has $K$ jumps. Denote by $\sigma^{*}_{1},\dots,\sigma^{*}_{N}$ the strengths of the outgoing waves relative to the Riemann problem among $v(0)$ (left value) and the second-to-last value $v^{*}$ of $v$ (right value).
By the induction hypothesis we have the estimate
\be
\label{E:grerga}
\left|\sigma^{*}_{i}-\sum_{k=1}^{K-1}s_{i}^{k}\right|+\sum_{j\neq i}|\sigma^{*}_{j}|
\leq 
Ce^{\displaystyle {C\,(|s_{i}^{1}|+\dots+|s_{i}^{K-1}|)}} \cdot (|s_{i}^{1}|+\dots+|s_{i}^{K-1}|)\cdot\rho \ .
\ee
Moreover, by the estimates on interactions among two consecutive Riemann problems \cite[Theorem 3.7]{sie} applied to the states $v(0)$ (left), $v^{*}$ (middle) and $v(1)$ (right) one has that the strengths $\sigma_{1},\dots,\sigma_{N}$ in the outgoing waves of the Riemann problem among $v(0)$ and $v(1)$ satisfy
\[
 \left|\sigma^{}_{i}-(\sigma^{*}_{i}+s_{i}^{K})\right| +\sum_{j\neq i}\left|\sigma^{}_{j}-\sigma^{*}_{j}\right| 
 \leq
\overline C |s_{i}^{K}| \sum_{j\geq i} |\sigma^{*}_{j}|  \ .
\]
By the last estimate and by the triangular inequalities
\bas
&\left|\sigma^{}_{i}-\sum_{k=1}^{K}s_{i}^{k}\right| \leq |\sigma^{}_{i} - (\sigma^{*}_{i}+s^{K}_{i}) |
+\left|\sigma^{*}_{i}+s^{K}_{i}-\sum_{k=1}^{K}s_{i}^{k}\right|
&&
\sum_{j\neq i}\left|\sigma^{}_{j} \right| =\sum_{j\neq i}\left|\sigma^{}_{j}- \sigma^{*}_{j}\right| +\sum_{j\neq i}\left|\sigma^{*}_{j} \right| 
\eas
we get
\begin{align*}
 \left|\sigma^{}_{i}-\sum_{k}s_{i}^{k}\right| +\sum_{j\neq i}\left|\sigma^{}_{j} \right| 
& \leq
 \left|\sigma^{}_{i}-(\sigma^{*}_{i}
 +s_{i}^{K})\right|+\left|\sigma^{*}_{i}-\sum_{k=1}^{K-1}s_{i}^{k}\right|
 +\sum_{j\neq i}\left|\sigma^{}_{j}-\sigma^{*}_{j} \right| +\sum_{j\neq i}|\sigma^{*}_{j}|
\\
& \leq
\left|\sigma^{*}_{i}-\sum_{k=1}^{K-1}s_{i}^{k}\right|+\sum_{j\neq i}|\sigma^{*}_{j}|+\overline C |s_{i}^{K}| \sum_{j> i} |\sigma^{*}_{j}| 
%\\
%& \leq
%\left|\sigma^{*}_{i}-\sum_{k=1}^{K-1}s_{i}^{k}\right|+e^{C |s_{i}^{K}|}\sum_{j\neq i}|\sigma^{*}_{j}|
  \ .
\end{align*}
Since $C e^{C\TV(v)}\TV(v)\leq 1$ implies jointly with \eqref{E:grerga} that \[\sum_{j> i} |\sigma^{*}_{j}| \leq\sum_{j\neq i} |\sigma^{*}_{j}| \leq  \rho\ ,\]  then the induction hypothesis~\eqref{E:grerga} yields 
\begin{align*}
 \left|\sigma^{}_{i}-\sum_{k}s_{i}^{k}\right| +\sum_{j\neq i}\left|\sigma^{}_{j} \right| 
& \leq
 \overline C |s_{i}^{K}|\rho  +Ce^{\displaystyle {C\,(|s_{i}^{1}|+\dots+|s_{i}^{K-1}|)}} \cdot (|s_{i}^{1}|+\dots+|s_{i}^{K-1}|)\cdot\rho 
 \\
 & \leq Ce^{\displaystyle {C\,(|s_{i}^{1}|+\dots+|s_{i}^{K}|)}} \cdot (|s_{i}^{1}|+\dots+|s_{i}^{K}|)\cdot\rho 
 \ .\qquad\qquad\qedhere
\end{align*}
\end{proof}

We state an elementary lemma on the geometric structure of piecewise-genuinely nonlinear characteristic fields.
We recall that by piecewise-genuine-nonlinearity $\nabla\lambda_{i}(u)\cdot r_{i}(u)$ vanishes only on $Z_{i}^{k}$ for $k=1,\dots,J_{i}$, which are hypersurfaces transversal to the $i$-rarefaction curves. 

\begin{lemma}
\label{L:acc0}
Let $\rho, M>0$. Suppose the $i$-th field is piecewise genuinely nonlinear.
Then there exists a positive constant $\overline C$ such that the following holds.
Suppose $v$ is a piecewise constant function as in Lemma \ref{L:acc2} and whose image lies in a connected component of the compact set
\[
D:=[-M,M]^{N}\setminus \bigcup_{k=1}^{J_{i}} B_{\rho}(Z_{k}^{i}) \ .
\]
Define $t_{1},t_{2}$ and $\ell,L>0$ by the relations
\[
L=\mathrm{diam}\,\mathrm{Image}\, v_{i} =|v_{i}(t_{1})-v_{i}(t_{2})| \ ,
\qquad
\ell= \min \left\{ \left|\nabla\lambda_{i}(u)\cdot r_{i}(u)\right| \quad:\quad u\in D\right\}\ .
\]
One has then that \quad $ | \lambda_{i}(v(t_{1}))-\lambda_{i}(v(t_{2}))|\geq\ell  L-\overline C\rho$\ .\\
\end{lemma}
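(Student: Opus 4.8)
\medskip
\noindent\textbf{Proof idea.}
The strategy is to reduce, via Lemma~\ref{L:acc2}, the whole chain of jumps of $v$ between $v(t_{1})$ and $v(t_{2})$ to a single $i$-elementary wave (up to a transversal error of size $O(\rho)$), and then to use that on the connected component $D'$ of $D$ containing $\mathrm{Image}(v)$ the function $u\mapsto\nabla\lambda_{i}(u)\cdot r_{i}(u)$ keeps one sign and is $\geq\ell$ in modulus, so that $\lambda_{i}$ accumulates at rate $\geq\ell$ along the corresponding rarefaction curve. I begin by normalising: since $\nabla\lambda_{i}\cdot r_{i}$ vanishes only on $Z_{i}=\bigcup_{j}Z_{i}^{j}$, which $D$ avoids, and $D'$ is connected, $\nabla\lambda_{i}\cdot r_{i}$ has constant sign on $D'$, say positive --- hence $\geq\ell$ on $D'$ by definition of $\ell$ --- and I relabel so that $t_{1}<t_{2}$. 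Let $U'$ be the connected component of $\{u:\nabla\lambda_{i}(u)\cdot r_{i}(u)>0\}$ containing $D'$; it is an open region whose boundary consists of hypersurfaces $Z_{i}^{k}$, which $i$-rarefaction curves cross transversally by Definition~\ref{D:pwGN}.

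Next I apply Lemma~\ref{L:acc2} to the restriction of $v$ to $[t_{1},t_{2}]$, which still satisfies the hypotheses of that lemma (with a total variation no larger): the Riemann problem with left state $v(t_{1})$ and right state $v(t_{2})$ has $i$-strength $\sigma_{i}$ and transversal strengths $\sigma_{j}$, $j\neq i$, with $\sum_{j\neq i}|\sigma_{j}|\leq\overline C\rho$. Since each $T_{j}$, $j\neq i$, is tangent to $r_{j}$ at the origin and $\langle l_{i},r_{j}\rangle=0$, the map $\Phi$ of~\eqref{eq2:RP1} alters the $i$-th coordinate only at second order in the transversal parameters, so that $\sigma_{i}=v_{i}(t_{2})-v_{i}(t_{1})+O(\rho)$ and therefore $|\sigma_{i}|=L+O(\rho)$. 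Setting $u^{*}:=T_{i}[v(t_{1})](\sigma_{i})$, Lipschitz dependence of $\Phi$ on the transversal parameters gives $|u^{*}-v(t_{2})|\leq\overline C\rho$, hence $|\lambda_{i}(v(t_{2}))-\lambda_{i}(u^{*})|=O(\rho)$; it thus remains to bound $|\lambda_{i}(u^{*})-\lambda_{i}(v(t_{1}))|$ from below by $\ell L-O(\rho)$.

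Consider first the case $\sigma_{i}>0$: in the genuinely nonlinear region $U'$, with the orientation fixed above, the $i$-elementary curve issuing from $v(t_{1})$ coincides with $R_{i}[v(t_{1})]$ as long as it stays in $U'$. Let $\xi_{1}>0$ be the first parameter at which $R_{i}[v(t_{1})]$ meets $\partial U'$, a surface $Z_{i}^{k}$ reached transversally. If $\sigma_{i}$ exceeded $\xi_{1}$ by more than $O(\rho)$ then $u^{*}$ would lie across $Z_{i}^{k}$, hence in a connected component of $D$ different from that of $v(t_{2})\in D'$, contradicting $|u^{*}-v(t_{2})|\leq\overline C\rho$ together with $\mathrm{dist}(v(t_{2}),Z_{i}^{k})\geq\rho$ for $\rho$ small; therefore $\sigma_{i}\leq\xi_{1}+O(\rho)$ and $u^{*}=R_{i}[v(t_{1})](\sigma_{i})$. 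Along $R_{i}[v(t_{1})]$ we have $\frac{d}{d\xi}\lambda_{i}(R_{i}[v(t_{1})](\xi))=(\nabla\lambda_{i}\cdot r_{i})(R_{i}[v(t_{1})](\xi))$, which is $\geq\ell$ whenever the point lies in $D'$, i.e.\ for every $\xi\in[0,\sigma_{i}]$ outside an $O(\rho)$-neighbourhood of $\xi_{1}$ (there the curve has entered the $\rho$-shell $B_{\rho}(Z_{i}^{k})$ deleted from $D$, but is still in $U'$, so the derivative is still $\geq0$). Integrating, $\lambda_{i}(u^{*})-\lambda_{i}(v(t_{1}))\geq\ell(\sigma_{i}-O(\rho))=\ell L-O(\rho)$, which combined with the previous paragraph gives the thesis. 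When instead $\sigma_{i}<0$ the connecting $i$-wave is a single admissible shock, and one argues analogously using the Oleinik--Liu admissibility condition recalled in \S\ref{Ss:structure} (which forces $\lambda_{i}$ to vary monotonically across the shock at rate $\geq\ell$ up to $O(\rho)$), the first-order contact $\frac{d}{ds}S_{i}[u](0)=r_{i}(u)$, and again the fact that the Hugoniot curve stays in $U'$ up to $O(\rho)$.

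The main obstacle is the geometric input of the last paragraph: proving that the reconstructed $i$-elementary curve (respectively Hugoniot curve) cannot cross a surface $Z_{i}^{k}$ into a region where $\nabla\lambda_{i}\cdot r_{i}$ changes sign, and establishing the quantitative lower bound on the jump of $\lambda_{i}$ across an admissible shock contained in a genuinely nonlinear region. Everything else is routine: the interaction bookkeeping is supplied by Lemma~\ref{L:acc2}, and the residual errors are controlled by Lipschitz continuity of the curves $T_{k}$, $S_{i}$ and of $\lambda_{i}$.
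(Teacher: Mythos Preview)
Your argument is correct and follows the same strategy as the paper: apply Lemma~\ref{L:acc2} to collapse the chain of jumps into a single $i$-wave of strength $|\sigma_{i}|=L+O(\rho)$ with transversal contamination $O(\rho)$, and then integrate $|\nabla\lambda_{i}\cdot r_{i}|\geq\ell$ along the $i$-curve, absorbing the endpoint error by Lipschitz continuity of $\lambda_{i}$.

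The one substantive difference is your case split. The paper notes that the desired inequality is symmetric in $v(t_{1})$ and $v(t_{2})$, so one may choose which of the two is the left state in the Riemann problem; in a genuinely nonlinear region this lets one always land on the rarefaction side of $T_{i}$, and the shock case never arises. Your treatment of $\sigma_{i}<0$ is therefore unnecessary, and also slightly mis-attributed: the input that makes the shock case go through is the second-order contact $S_{i}[u](s)=R_{i}[u](s)+O(s^{3})$ (which you do mention), not the Liu admissibility condition, which constrains the intermediate shock speeds $\sigma_{i}[u^{L}](\tau)$ rather than the eigenvalue $\lambda_{i}$ along $S_{i}$. Conversely, you are more explicit than the paper in checking that the reconstructed $i$-curve does not escape the connected component of $D'$ across some $Z_{i}^{k}$; the paper's sentence ``along that $i$-rarefaction curve $\lambda_{i}$ varies at least of $\ell L$'' takes this for granted.
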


\begin{proof}
The assumption that $v$ is valued in a connected component of $D$ is of course crucial.
The proof is a direct consequence of the fact that, in the region where $v$ is valued, the system is actually genuinely nonlinear with $\left|\nabla\lambda_{i}(u)\cdot r_{i}(u)\right|\geq\ell$.
Indeed, due to the choice of the parameterization jointly with Lemma \ref{L:acc2}, the Riemann problem having $v(t_{1})$, $v(t_{2})$ as left / right states, or viceversa, contains an $i$-rarefaction curve of strength at least $L-\dH \rho$ while the total strengths of other waves are less than $\dH \rho$.
Along that $i$ rarefaction curve $\lambda_{i}$ varies at least of $\ell  L$ since $\left|\nabla\lambda_{i}(u)\cdot r_{i}(u)\right|\geq\ell$, and then one hast the thesis by the Lipschitz continuity of $\lambda_{i}$.
\end{proof}

%:
%:
%:
%:
\subsubsection{Auxiliary lemmas}
\label{Ss:auxiliary}

\begin{lemma}
\label{L:distinctf}
Let $\varepsilon>0$.
Consider a space-like segment $P_{\nu}Q_{\nu}$ for which there are two distinct indexes $j< k$ such that each segment $P_{\nu}Q_{\nu}$ is crossed by an amount $\geq \varepsilon>0$ of both $j$-waves in $u_{\nu}$ and $k$-waves in $u_{\nu}$.
If $P_{\nu}\to \oP$ and $Q_{\nu}\to \oP$ then necessarily $\mu^{IC}(\{\oP\})>0$---see  Figure~\ref{fig:regioneDist}.
\end{lemma}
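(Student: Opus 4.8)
The plan is to transplant to the present fractional--step setting the classical mechanism of \cite[\S~10.4]{BressanBook} and \cite{BYu}: two characteristic families travel at different speeds, so if $\geq\varepsilon$ of $j$-waves and $\geq\varepsilon$ of $k$-waves are squeezed through a vanishingly small spacetime region they must interact there, which forces an atom of the interaction--cancellation measure at the limit point. We may assume $\varepsilon$ small. Since $j<k$, strict hyperbolicity \eqref{eq:hyp} provides constants $a<b$ with $\sup_{u}\lambda_{j}(u)<a<b<\inf_{u}\lambda_{k}(u)$; for $\nu$ large (so that $\varepsilon_{\nu}$ and $\TV u_{\nu}$ are small) every $j$-front of $u_{\nu}$ has speed $<a$, every $k$-front has speed $>b$, and lines of slope $a$ or $b$ are transversal to all fronts. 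Writing $p_{\nu}<q_{\nu}$ for the $x$-coordinates of $P_{\nu},Q_{\nu}$ (both $\to\ox$), I would take $\Gamma_{\nu}$ to be the parallelogram having $P_{\nu}$ and $Q_{\nu}$ as opposite vertices and sides of slopes $a$ and $b$ (the ``lens'' of Figure~\ref{fig:regioneDist}): the diagonal $P_{\nu}Q_{\nu}$ splits it into a ``past'' and a ``future'' triangle, all four sides are transversal to the fronts, and since $q_{\nu}-p_{\nu}\to0$ every vertex tends to $\oP$, so $\overline\Gamma_{\nu}$ shrinks to $\{\oP\}$.

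The core step is the geometric claim that, inside $\overline\Gamma_{\nu}$, \emph{every} $j$-front crossing $P_{\nu}Q_{\nu}$ meets \emph{every} $k$-front crossing it --- unless one of the two is first removed by cancellation. A $j$-front $F$ (speed $<a$), traced backward, can leave the past triangle only through its slope-$b$ side (the one through $Q_{\nu}$) and, traced forward, can leave the future triangle only through its slope-$b$ side (through $P_{\nu}$); symmetrically a $k$-front $G$ (speed $>b$) can leave the past triangle only through its slope-$a$ side (through $P_{\nu}$) and the future triangle only through its slope-$a$ side (through $Q_{\nu}$). Comparing the three speeds one checks that if $G$ crosses $P_{\nu}Q_{\nu}$ to the right of $F$ then $F$ reaches the position of $G$ strictly before either front leaves the past triangle, and if $G$ is to the left of $F$ the same happens in the future triangle. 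Using that at a $j$--$k$ interaction there are outgoing $j$- and $k$-fronts of strengths differing from the incoming ones by $\OL(\,|s_{F}s_{G}|\,)$ (Lemma~\ref{E:knglnj}), one may follow the fronts through these meetings; each meeting is an atom of $\mu^{I}_{\nu}$ of mass $\I(s_{F},s_{G})=|s_{F}s_{G}|$ by Definition~\ref{def:qi}, and distinct pairs meet at distinct points since each interaction involves exactly two incoming fronts.

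It then remains to quantify this as $\mu^{IC}_{\nu}(\overline\Gamma_{\nu})\geq c(\varepsilon)>0$ for all large $\nu$ and to pass to the limit. For the limit: given $r>0$, eventually $\overline\Gamma_{\nu}\subseteq\overline{B_{r}(\oP)}$, so $\limsup_{\nu}\mu^{IC}_{\nu}(\overline{B_{r}(\oP)})\geq c(\varepsilon)$; the weak* convergence $\mu^{IC}_{\nu}\rightharpoonup\mu^{IC}$ and upper semicontinuity on the compact $\overline{B_{r}(\oP)}$ give $\mu^{IC}(\overline{B_{r}(\oP)})\geq c(\varepsilon)$, and letting $r\downarrow0$ yields $\mu^{IC}(\{\oP\})\geq c(\varepsilon)>0$ since $\mu^{IC}$ is a locally finite Radon measure. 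For the quantitative bound I would argue by dichotomy. If $\mu^{IC}_{\nu}(\overline\Gamma_{\nu})\geq\varepsilon^{2}/8$ there is nothing to prove. Otherwise the total cancellation of $j$- and of $k$-waves inside $\overline\Gamma_{\nu}$ is $<\varepsilon^{2}/8$; then by the balance Lemma~\ref{L:rqgsbb} (applied to $\Gamma_{\nu}$, harmlessly enlarged to begin and end at update times, which still shrinks to $\{\oP\}$ since $\tau_{\nu}\to0$) at least $\varepsilon-\OL(1)\bigl(\mu^{IC}_{\nu}(\overline\Gamma_{\nu})+\tau_{\nu}\bigr)\geq\varepsilon/2$ of the $j$-waves, and of the $k$-waves, crossing $P_{\nu}Q_{\nu}$ survive --- with strengths altered only by a factor $1+\OL(\TV u_{\nu})$ --- until they have met all fronts of the other family, so by the previous step their pairwise meetings contribute more than $\tfrac12(\varepsilon/2)^{2}=\varepsilon^{2}/8$ to $\mu^{I}_{\nu}(\overline\Gamma_{\nu})\leq\mu^{IC}_{\nu}(\overline\Gamma_{\nu})<\varepsilon^{2}/8$, a contradiction; hence $\mu^{IC}_{\nu}(\overline\Gamma_{\nu})\geq c(\varepsilon):=\varepsilon^{2}/8$. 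The only novelty with respect to \cite{BressanBook,BYu} is the time updates: by Remark~\ref{R:timeupdate} they change each wave amount by $\OL(1)\tau_{\nu}\oV$ per step, hence by $\OL(1)\tau_{\nu}$ in total over $\overline\Gamma_{\nu}$, which vanishes as $\nu\uparrow\infty$ and is therefore harmless. The main obstacle is precisely the bookkeeping in this last step --- tracking which fronts enter or leave $\Gamma_{\nu}$ through which side, and how much strength a surviving front may lose across its repeated meetings --- and it is resolved by the speed gap (which confines the exits to the ``far'' sides of each triangle) together with the smallness of $\mu^{IC}_{\nu}(\overline\Gamma_{\nu})$ and of $\TV u_{\nu}$ assumed in the dichotomy.
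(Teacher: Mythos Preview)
Your proof is correct and follows essentially the same approach as the paper. The paper's argument is terse: it bounds the region $\Gamma_{\nu}$ by forward/backward $k$-characteristics through $P_{\nu}$ and $j$-characteristics through $Q_{\nu}$, observes that by strict hyperbolicity these intersect and the region shrinks to $\oP$, and then simply states that ``by interaction estimates'' the amount of interaction is $\gtrsim\varepsilon^{2}$. You replace the characteristic boundaries by straight lines of intermediate slopes $a,b$, which is harmless (and arguably cleaner), and you supply the quantitative justification (the dichotomy and the appeal to Lemma~\ref{L:rqgsbb}) and the limiting step via upper semicontinuity that the paper leaves implicit.
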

\begin{proof}
As the segment $P_{\nu}Q_{\nu}$ is space like, suppose for example that $P_{\nu}$ is on the left of $Q_{\nu}$.
Consider the region $\Gamma_{\nu}$ delimited by the leftmost $k$ forward and backward $k$-characteristics through $P_{\nu}$ and by the rightmost forward and backward $j$-characteristics through $Q_{\nu}$.
By strict hyperbolicity~\eqref{eq:strhyp} they intersect at points $R^{*}_{\nu}$, $S^{*}_{\nu}$ which converge to $\oP$ and therefore the region $\Gamma_{\nu}$ shrinks to the single point $\oP$.
By interaction estimates, there is an amount of interaction $\gtrsim \varepsilon^{2}$ in the region $\overline\Gamma_{\nu}$, which yields in the $\nu$-limit that $\mu^{IC}(\{\oP\})>0$.
\end{proof}

\begin{lemma}
\label{L:samef}
 Assume that the $i$-th family is piecewise genuinely nonlinear. Let $\varepsilon>0$.
If in some open region $\Gamma_{\nu}$ shrinking to a given point $\oP=(\ot,\ox)$, as $\nu\uparrow\infty$, in $u_{\nu}$ 
\begin{enumerate}
\item \label{a:1}
there is at most one $i$-jump $\og_{\nu}$ whose $j$-th component has strength more than $\varepsilon$ while the strengths of the $j$-th component of other $i$-waves are vanishingly small but 
\item  the total amount of strengths of the $j$-th component of $i$-waves different from $\og_{\nu}$
is more than $3\varepsilon$ and
\item\label{a:3} for all $k\neq i$ the amount of $k$-waves of $u_{\nu}$
is vanishingly small as $\nu\uparrow\infty$,
\end{enumerate}
then $\mu^{IC}(\{\oP\})>0$---see Figure~\ref{fig:regione1}.
\end{lemma}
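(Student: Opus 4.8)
The plan is to establish the conclusion $\mu^{IC}(\{\oP\})>0$ directly, by exhibiting a family of regions $\Gamma_{\nu}$ shrinking to $\{\oP\}$ for which $\mu^{IC}_{\nu}(\overline\Gamma_{\nu})$ is bounded below by a positive quantity $c(\varepsilon)$ independent of $\nu$. Once this is done, since $\mu^{IC}_{\nu}\rightharpoonup\mu^{IC}$ and the $\overline\Gamma_{\nu}$ are compact sets eventually contained in any prescribed neighbourhood of $\oP$, the upper semicontinuity of $w^{*}$-convergence on compacta together with the outer regularity of the finite measure $\mu^{IC}$ gives $\mu^{IC}(\{\oP\})\geq\limsup_{\nu}\mu^{IC}_{\nu}(\overline\Gamma_{\nu})\geq c(\varepsilon)>0$. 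So the whole matter reduces to constructing $\Gamma_{\nu}$ and proving the lower bound.

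For the geometry, I would argue as follows. By hypothesis (1) every $i$-front of $u_{\nu}$ other than $\og_{\nu}$ has infinitesimal $j$-th component, so the $(i,j)$-sub-discontinuities appearing between $P_{\nu}$ and $Q_{\nu}$ form a fine-grained, essentially monotone cloud of total strength $>3\varepsilon$, all of whose states lie in the single slab enclosed by $Z_{i}^{j}$ and $Z_{i}^{j+1}$, where the $i$-field is genuinely nonlinear with $|\nabla\lambda_{i}\cdot r_{i}|\geq\ell>0$. Restricting the space-like segment $P_{\nu}Q_{\nu}$ to the portion on which $u_{\nu}$ takes values in a fixed connected component of the set $D$ of Lemma~\ref{L:acc0}, and using Lemma~\ref{L:acc2} to identify, up to infinitesimal errors coming from the at-most-one large front $\og_{\nu}$ and, by hypothesis (3), from the transversal waves, the $i$-wave content of that portion with the sum of the sub-discontinuity strengths, Lemma~\ref{L:acc0} yields that $\lambda_{i}$ drops by at least $\ell L-\overline C\rho\gtrsim\ell\varepsilon$ from the left endpoint of the cloud to its right endpoint; since $\og_{\nu}$ is Lax-admissible its speed lies below $\lambda_{i}$ evaluated just on its left, so the $i$-characteristic speed on the far left of the cloud exceeds the speed of $\og_{\nu}$ by a definite margin $\gtrsim\ell\varepsilon$.

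For the collapse and the counting, following~\cite[Ch.~10]{BressanBook} and~\cite{BYu}, this speed gap forces the cloud to compress forward: within a time $\lesssim(\text{width of the base})=o(1)$ after $t_{\nu}$, every $(i,j)$-sub-discontinuity starting between $P_{\nu}$ and $Q_{\nu}$ has either been absorbed into $\og_{\nu}$ (an interaction, being faster), merged with another $i$-front, cancelled against an $i$-rarefaction, or met a wave of another family, and all of this happens inside a rectangle $\Gamma_{\nu}$ with time-like lateral edges, chosen so wide relative to its vanishing height that no front can traverse it sideways; such $\Gamma_{\nu}$ shrink to $\{\oP\}$, and the balance Lemma~\ref{L:rqgsbb} in the slab, together with Remark~\ref{R:timeupdate} at update times, controls what enters or leaves through the sides. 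Now either the total cancellation inside $\overline\Gamma_{\nu}$ is already $\geq\varepsilon$, whence $\mu^{IC}_{\nu}(\overline\Gamma_{\nu})\geq\varepsilon$, or it is $<\varepsilon$, in which case a strength $\geq2\varepsilon$ of $(i,j)$-waves is funnelled into $\og_{\nu}$, directly or through intermediate mergers, and since $\og_{\nu}$ has $j$-th component $>\varepsilon$ while the merging fronts are slower by $\gtrsim\ell\varepsilon$, the amount-of-interaction estimates for same-family fronts (cf.~Definition~\ref{def:qi}, Remark~\ref{rem4:amshocks} and Lemma~\ref{E:knglnj}) give $\mu^{IC}_{\nu}(\overline\Gamma_{\nu})\gtrsim\ell\,\varepsilon^{3}$. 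Either way $\mu^{IC}_{\nu}(\overline\Gamma_{\nu})\geq c(\varepsilon)>0$, and passing to the limit concludes. The case in which no large front $\og_{\nu}$ is present is handled identically and more simply, the entire fine-grained cloud compressing onto itself.

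The main obstacle is the collapse step: one must show rigorously, in the front-tracking setting with a source term and for piecewise-genuinely-nonlinear fields, that the compressed $(i,j)$-sub-discontinuities cannot leak out of $\Gamma_{\nu}$ nor reorganise into a surviving front without leaving a quantifiable trace in $\mu^{IC}_{\nu}$. This is exactly where hypotheses (1) and (3) are essential, fine-grainedness ruling out any stable limiting $i$-front other than $\og_{\nu}$ and negligibility of transversal waves keeping the characteristic geometry effectively one-dimensional, and it is where the bookkeeping of generalized $i$-characteristics crossing a cloud of shocks of a genuinely nonlinear family, as developed in~\cite[Ch.~10]{BressanBook} and~\cite{BYu}, has to be combined carefully with the geometric Lemmas~\ref{L:acc2}--\ref{L:acc0}, the sub-discontinuity estimate Lemma~\ref{E:knglnj}, and the balance Lemma~\ref{L:rqgsbb}.
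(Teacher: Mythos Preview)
Your overall strategy matches the paper's: use Lemmas~\ref{L:acc2}--\ref{L:acc0} to extract from the segment $P_{\nu}Q_{\nu}$ two points $P_{\nu}$, $C_{\nu}$ at which $\lambda_{i}\circ u_{\nu}$ differ by a fixed $c>0$, and then exploit this speed gap to force interaction--cancellation in a shrinking region. The geometric part of your argument is essentially the paper's Step~1.

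The gap is in your ``collapse'' step. Your dichotomy (either cancellation $\geq\varepsilon$, or absorption of $\geq2\varepsilon$ into $\og_{\nu}$) tacitly assumes that within time $\sim|P_{\nu}C_{\nu}|/c$ \emph{every} small $(i,j)$-front has been processed. This is not automatic: the speeds of the extremal $i$-fronts $\gamma_{\nu}^{\ell}$ (through $P_{\nu}$) and $\gamma_{\nu}^{r}$ (through $C_{\nu}$) may change along the way because of interactions, so that the gap $|\dot\gamma_{\nu}^{\ell}-\dot\gamma_{\nu}^{r}|$ shrinks below $c/4$ before the fronts meet, and the cloud may simply exit your rectangle through the top without ever having funnelled the required $2\varepsilon$ into $\og_{\nu}$. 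Your dichotomy does not cover this alternative.

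The paper closes this gap with a different mechanism (its Steps~3--5): if $\gamma_{\nu}^{\ell}$ and $\gamma_{\nu}^{r}$ do \emph{not} meet within $\Delta_{\nu}=4|P_{\nu}C_{\nu}|/c$, then the slope of at least one of them must have changed by $\geq c/4$ on $[\ot_{\nu},t_{\nu}]$; one then estimates that slope variation directly via~\eqref{E:gerqgrrgg} (Lemma~\ref{L:timeEst} at update times plus interaction estimates) and the balance Lemma~\ref{L:rqgsbb} applied to a thin neighbourhood of the graph of that front. The outcome is that either $\mu^{IC}_{\nu}$ restricted to $i_{\gamma}([\ot_{\nu},t_{\nu}])$ is uniformly positive, or a non-vanishing amount of incoming $i$-waves of the wrong sign is detected, which again forces $\mu^{IC}_{\nu}$ to be uniformly positive by~\eqref{E:gtoooiioi}--\eqref{E:tejknjrhjrg}. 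This ``speed-change implies $\mu^{IC}$'' branch is the missing piece in your outline. Note also that the paper treats both the approaching and the separating configurations (the latter by looking \emph{backward} in time from $t_{\nu}$); your write-up only addresses the forward, compressive case.
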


\begin{proof}\firststep
\step{Finding two $i$-fronts $\gamma_{\nu}^{\ell}$, $\gamma_{\nu}^{r}$ of $u_{\nu}$ whose slopes at some time $t_{\nu}\to \ot$ remain distant without having any $(\beta,i,j)$-front in between}
In the hypothesis of the lemma, we can fix a space segment $P_{\nu}Q_{\nu}\subset\{t=t_{\nu}\}$, for $P_{\nu}Q_{\nu}\in\Gamma_{\nu}$, which is shrinking to the given point $\oP=(\ot,\ox)$ as $\nu\to0$, such that the total amount the $j$-th component of $i$-waves in $u_{\nu}$ along $P_{\nu}Q_{\nu}$ is more than $ \varepsilon$, but each one is vanishingly small as $\nu\uparrow\infty$.
Assume also that, if present, $\og_{\nu}$ lies, for example, on the right of $P_{\nu}Q_{\nu}$, $Q_{\nu}$ on the right of $P_{\nu}$ and
\[
u_{\nu}(P_{\nu})\to u^{L} \ ,
\qquad |u_{\nu}(Q_{\nu})-u^{L}|>\varepsilon 
\]
where $u^{L}$ denotes the left limit of $u(\ot,\cdot)$ at $\ox$, if suitably normalized.
We distinguish cases:
\begin{enumerate}

\item If $u^{L}\notin Z_{i}^{j}$ and $u^{L}\notin Z_{i}^{j+1}$ then Lemma~\ref{L:acc0} applies to $u_{\nu}$ restricted to an initial part of the interval $P_{\nu}Q_{\nu}$ and thus for some $c>0$ we can pick up a point $C_{\nu}$ belonging to the segment $P_{\nu}Q_{\nu}$ such that $\bullet$ as $\nu\uparrow\infty$
\[
|\lambda_{i}(u_{\nu}(C_{\nu})) - \lambda_{i}(u_{\nu}(P_{\nu}))|>c>0 \ ,
\]
such that $\bullet$ the total strength of $j$-components of $i$-fronts crossing $P_{\nu}C_{\nu}$ is at least $c$ and such that
$\bullet$ along the segment $P_{\nu}C_{\nu}$ the distance of $u$ from $Z_{i}^{j}$ and $ Z_{i}^{j+1}$ is at least $c$, so that $u$ takes values in a region where the $i$-characteristic field is genuinely nonlinear.

\item Even if $u^{L}\in Z_{i}^{j}$ or $u^{L}\in Z_{i}^{j+1}$, the maximum distance of $u_{\nu}(C)$ from $Z_{i}^{j}$ and $Z_{i}^{j+1}$, for $C$ varying in the interval $P_{\nu}Q_{\nu}$, cannot be vanishingly small: we could otherwise apply Lemma~\ref{L:acc2} to $u_{\nu}$ restricted on the whole segment $P_{\nu}Q_{\nu}$ and we would reach a contradiction with the assumption that the total amount of $i$-waves is not vanishingly small.
As the maximum distance of $u_{\nu}(C)$ from $  Z_{i}^{j}$ and $  Z_{i}^{j+1}$, for $C\in P_{\nu}Q_{\nu}$, is not vanishingly small, we can apply Lemma~\ref{L:acc0} to $u_{\nu}$ restricted on some sub-segment of $P_{\nu}Q_{\nu}$: for some $c>0$ we can thus pick up a point $C_{\nu}$ belonging to the segment $P_{\nu}Q_{\nu}$ such that $\bullet$ as $\nu\uparrow\infty$
\[
|\lambda_{i}(u_{\nu}(C_{\nu})) - \lambda_{i}(u_{\nu}(P_{\nu}))|>c>0 \ ,
\]
such that $\bullet$ the total strength of $j$-components of $i$-fronts crossing $P_{\nu}C_{\nu}$ is at least $c$ and such that
$\bullet$ along the segment $P_{\nu}C_{\nu}$ the distance of $u$ from $Z_{i}^{j}$ and $ Z_{i}^{j+1}$ is at least $c$, so that $u$ takes values in a region where the $i$-characteristic field is genuinely nonlinear.

\item If any limit point of $u_{\nu}(Q_{\nu})$, up to subsequence, belongs to the closed region between $ Z_{i}^{j}$ and $Z_{i}^{j+1}$ then the previous points still apply similarly.
In case not, by the assumptions one can replace $Q_{\nu}$ with another point $\overline{Q_{\nu}}$ having the same properties above and such that a limit point of the sequence $\overline{Q_{\nu}}$ falls in the closed region between $ Z_{i}^{j}$ and $Z_{i}^{j+1}$. 

\end{enumerate}
Consider now the leftmost $i$-discontinuity curve $\gamma_{\nu}^{\ell}$ of $u_{\nu}$, through $P_{\nu}$, and the rightmost one $\gamma_{\nu}^{r}$, through the point $C_{\nu}$ just determined.
Since the maximum size of jumps is vanishingly small we can also assume that
\ba\label{E:rfrgerqrg}
|\dot \gamma_{\nu}^{\ell}(t_{\nu})- \dot\gamma_{\nu}^{r}(t_{\nu})|>c>0
\ea
by the construction above of the point $C_{\nu}$.

\step{Conclusion when $\gamma_{\nu}^{\ell}$ and $\gamma_{\nu}^{r}$ meet at some time $\ot_{\nu}\to\ot$}
In case $\gamma_{\nu}^{\ell}$ and $ \gamma_{\nu}^{r}$ meet at time $\ot_{\nu}$ with
\[
|\ot_{\nu}-t_{\nu}|\leq \Delta_{\nu} 
\qquad
\text{where}
\qquad
\Delta_{\nu} :=4\frac{|P_{\nu}C_{\nu}|}{c} \ ,
\]
then denote by $ \Gamma_{\nu}$ the region delimited by the segment $P_{\nu}C_{\nu}$, by $\gamma_{\nu}^{\ell}$ and by $\gamma_{\nu}^{r}$ between times $\ot_{\nu}$ and $\ot $.
The region $\Gamma_{\nu}$ shrinks to $\oP$ and, by construction, {one can prove that} $\mu^{IC}_{\nu}\left(\overline \Gamma_{\nu}\right)$ is uniformly positive: this yields the thesis $\mu^{IC}(\oP)>0$.
Of course, $\gamma_{\nu}^{\ell}$ and $ \gamma_{\nu}^{r}$ necessarily meet by one such time $\ot_{\nu}$ if the slopes $\dot \gamma_{\nu}^{\ell}$, $\dot \gamma_{\nu}^{r}$ satisfy
\[
|\dot \gamma_{\nu}^{\ell}(q )- \dot\gamma_{\nu}^{r}(q )|>\frac{c}{4}
\qquad
\text{for all $q$ such that $|q-t_{\nu}|\leq\Delta_{\nu}  $.} 
\]

\step{Claim when $\gamma_{\nu}^{\ell}(q)$ and $\gamma_{\nu}^{r}(q)$ do not meet for $\left|q-t_{\nu}\right|\leq \Delta_{\nu}$}
In case $\gamma_{\nu}^{\ell}$ and $ \gamma_{\nu}^{r}$ do not meet in the time interval $\left[t_{\nu}-\Delta_{\nu},t_{\nu}+\Delta_{\nu}\right]$, we claim that
\ba
\label{E:gggggqgre}
\mu^{IC}_{\nu}\Big(i_{\gamma_{\nu}^{\ell}}\left(\left[t_{\nu}-\Delta_{\nu},t_{\nu}+\Delta_{\nu}\right]\right)\cup i_{\gamma_{\nu}^{r}}\left(\left[t_{\nu}-\Delta_{\nu},t_{\nu}+\Delta_{\nu}\right]\right)\Big)
\not\xrightarrow{} 0
\qquad \nu\uparrow\infty\ .
\ea
The symbol $i_{\gamma }$ denotes the map $s\mapsto (s,\gamma(s))$ for $s$ in the domain of $\gamma$.
We now prove~\eqref{E:gggggqgre}.
\step{Necessary condition if $\gamma_{\nu}^{\ell}(q)$ and $\gamma_{\nu}^{r}(q)$ do not meet of $\left|q-t_{\nu}\right|\leq \Delta_{\nu}$}
If $\gamma_{\nu}^{\ell}$ and $ \gamma_{\nu}^{r}$ do not meet for $\left|q-t_{\nu}\right|\leq\Delta_{\nu}$ then by~\eqref{E:rfrgerqrg} and by the triangular inequality necessarily:
\begin{enumerate}
\item In case $\gamma_{\nu}^{\ell}$ and $\gamma_{\nu}^{r}$ are approaching at time $t_{\nu}$---namely if
\[
\lambda_{i}(u_{\nu}(P_{\nu}))=\dot \gamma_{\nu}^{\ell}(\ot_{\nu}) >\dot \gamma_{\nu}^{r}(\ot_{\nu}) +c=\lambda_{i}(u_{\nu}(C_{\nu}))+c
\]
then at some $\ot_{\nu}$ with $t_{\nu}<\ot_{\nu}<t_{\nu}+\Delta_{\nu}$
\ba
\text{either }
\dot \gamma_{\nu}^{\ell}(\ot_{\nu}) <\dot \gamma_{\nu}^{\ell}( t_{\nu})-\frac{c}{4}
\qquad \text{ or }\qquad
 \dot \gamma_{\nu}^{r}(\ot_{\nu})>\dot \gamma_{\nu}^{r}( t_{\nu}) + \frac{c}{4} \ .
\ea
\item
If at time $t_{\nu}$ the fronts $\gamma_{\nu}^{\ell}$ and $\gamma_{\nu}^{r}$ are getting far apart---i.e.~$\dot \gamma_{\nu}^{\ell}(\ot_{\nu})<\dot \gamma_{\nu}^{r}(\ot_{\nu})$---then
\ba
\label{E:ggaagggr}
\text{either }
\dot \gamma_{\nu}^{\ell}(\ot_{\nu}) >\dot \gamma_{\nu}^{\ell}( t_{\nu})-\frac{c}{4}
 \qquad\text{ or } \qquad
 \dot \gamma_{\nu}^{r}(\ot_{\nu})<\dot \gamma_{\nu}^{r}( t_{\nu}) + \frac{c}{4}  
 \ea
at some $\ot_{\nu}$ with $t_{\nu}-\Delta_{\nu}<\ot_{\nu}<t_{\nu} $.
\end{enumerate}
\step{Proof of the claim} Suppose e.g~$\gamma_{\nu}^{\ell}(q)$ and $\gamma_{\nu}^{r}(q)$ do not meet for $t_{\nu}-\Delta_{\nu}<q<t_{\nu} $, as in the other case the analysis is analogous.
Let $\ot_{\nu}$ be as in~\eqref{E:ggaagggr}.
We first estimate how the slope of an $i$-front $\gamma$ might vary.
Collecting the estimates in Lemma~\ref{L:timeEst} at each update time in $[\ot_{\nu},t_{\nu}]$ jointly with interaction estimates in~\cite[Lemma~1]{AMfr}, we find the rough estimate
\ba
\label{E:gerqgrrgg}
\left|\dot \gamma (\ot_{\nu})
-\dot \gamma (t_{\nu})
\right|
\lesssim 
\Delta_{\nu}+\tau_{\nu}
+M_{ i}^{\nu}(\gamma,\ot_{\nu},t_{\nu})+M_{ *}^{\nu}(\gamma,\ot_{\nu},t_{\nu}) + \mu^{IC}_{\nu}\left(i_{\gamma}([\ot_{\nu},t_{\nu}])\right) 
\ea
where we adopted the following notation:
\begin{itemize}
\item $M_{*}^{\nu}(\gamma,r,t)$ is the strength of all $k$-waves, $k\neq i$, interacting with $\gamma $ between times $r$ and $t$;
\item $M_{  i}^{\nu}(\gamma,r,t)$ is the strength of $i$-waves interacting with $\gamma $ between times $r$ and $t$.

\end{itemize}

By assumption $M_{*}^{\nu}(\gamma_{\nu}^{\ell},\ot_{\nu},t_{\nu})$, $M_{*}^{\nu}(\gamma_{\nu}^{r},\ot_{\nu},t_{\nu})$, $\Delta_{\nu}$, $\tau_{\nu}$ are vanishingly small as $\nu\uparrow\infty$.
If also $M_{ i}^{\nu}(\gamma_{\nu}^{\ell},\ot_{\nu},t_{\nu})$ and $M_{ i}^{\nu}(\gamma_{\nu}^{r},\ot_{\nu},t_{\nu})$ are vanishingly small, then~\eqref{E:gerqgrrgg} directly implies~\eqref{E:gggggqgre} for $\nu$ large enough, so that $\mu^{IC}(\oP)>0$.

Suppose instead that one among $M_{ i}^{\nu}(\gamma_{\nu}^{\ell},\ot_{\nu},t_{\nu})$ and $M_{ i}^{\nu}(\gamma_{\nu}^{r},\ot_{\nu},t_{\nu})$ is not vanishingly small.
Denote simply by $\gamma$ such front.
Assume also that $j$ is even, so that $(\beta,i,j)$-sub-discontinuity fronts correspond to positive parameters $s$ in Definition~\ref{D:subdisc1}, for notational convenience.

By Lemma~\ref{L:rqgsbb} applied with $\Gamma$ a neighborhood of $i_{\gamma}([\ot_{\nu},t_{\nu}])$, one has
\ba
\label{E:gtoooiioi}
 |(s_{i} )^{+}(t_{\nu}+)-(s_{i} )^{+}(\ot_{\nu}-)-M_{ i}^{+\nu}(\gamma, \ot_{\nu},t_{\nu})|\lesssim \Delta_{\nu}+\tau_{\nu}+\mu^{IC}_{\nu}\left(i_{\gamma}([\ot_{\nu},t_{\nu}])\right)  \\
 \label{E:tejknjrhjrg}
 |(s_{i} )^{-}(t_{\nu}+)-(s_{i} )^{-}(\ot_{\nu}-)-M_{ i}^{-\nu}(\gamma, \ot_{\nu},t_{\nu})|\lesssim \Delta_{\nu}+\tau_{\nu}+\mu^{IC}_{\nu}\left(i_{\gamma}([\ot_{\nu},t_{\nu}])\right)  
\ea
where $|s_{i}|$ is the strength of the $i$-front $\gamma$ and the apex ${}^{+}$ (resp.~${}^{-}$) means that we are taking into account only strengths of positive (resp.~negative) $i$-waves.
Since by construction 
\[
(s_{i})^{+}(t_{\nu}+)=s_{i}^{j}(t_{\nu}+)\ ,
\qquad 
(s_{i})^{-}(t_{\nu}+) \ ,
\qquad
(s_{i})^{-}(\ot_{\nu}-)
\] 
are vanishingly small---the first one by the choice of the point $C_{\nu}$ / $P_{\nu}$ while the second one since single rarefaction fronts have vanishingly small strengths---and since $ (s_{i} )^{+}(\ot_{\nu}-)$, $M_{ i}^{+\nu}(\gamma, \ot_{\nu},t_{\nu})$ are both nonnegative by definition, then
\begin{itemize}
\item either $ (s_{i} )^{+}(\ot_{\nu}-)+M_{ i}^{+\nu}(\gamma, \ot_{\nu},t_{\nu})$ is not vanishingly small, so that \eqref{E:gtoooiioi} implies~\eqref{E:gggggqgre},
\item or $ M_{ i}^{-\nu}(\gamma, \ot_{\nu},t_{\nu})$ is not vanishingly small, from which~\eqref{E:tejknjrhjrg} implies~\eqref{E:gggggqgre},
\end{itemize}
or both of the cases happen.
This concludes the proof of the theorem.
\end{proof}
\begin{remark}
Lemma~\ref{L:samef} extends with little modification to the case of two or more
$i$-jumps converging to $\oP$ whose $j$-th component has strength more than
$\varepsilon$, rather than a single one, both with or without any uniformly positive
amount of $i$-waves of vanishingly small strength.
\end{remark}

%{\color{red}- Bisogna accennare ala spiegazione di questo Remark?}
%
%
%{\color{red}{- NON Aggiungere paragrafo con sorgente dipendente da $t$ e $x$??.
%Eventualmente, spiegare,
%senza fare conti, rienunciando il Lemma 4.2 e introducendo le misure
%%
%$$
%\gamma_{\nu}(\{P\}):=\tau_{\nu} \norm{\nabla_u g}_{\infty}|\sigma|
%+\tau_{\nu} \alpha(P)
%$$
%%
%dove $\sigma$ \`e l'enventuale fronte che attraversa $P$ all'istante $\tau_\nu$,
%e $\alpha$ \`e la funzione che compare nell'ipotesi \textbf{(G)}. Bisogna spiegare
%cosa succede introducendo nuovi salti at ogni time step, distinguendo salti
%di famiglie linermente degeneri da quelli di famiglie genuinamente non lineari
%a pezzi.
%}}
%
%
\section{Essential nomenclature}

$\mathrm{ceil}$: Smallest integer bigger than a given real number, i.e.~integer part of the number plus one. 
\\[.4\baselineskip]
$ \delta_{hk}$: Delta di Kronecker, which is equal to $1$ if $h=k$ and it vanishes otherwise.
\\[.4\baselineskip]
$\hat\lambda$: Uniform bound for the characteristic speeds of very family.
\\[.4\baselineskip]
$\vSC{t}{h}\bar u$: The viscous semigroup of the Cauchy problem~\eqref{eq:sysnc}-\eqref{eq:inda} starting at time $h$, rather than fixing the initial time $h=0$. See~\cite{Ch1}.
\\[.4\baselineskip]
$\vSB{t}{h}$: The semigroup of the Cauchy problem for the homogeneous system~\eqref{eq:sysnc}-\eqref{eq:inda} when $g\equiv 0$ starting at time $h$, rather than fixing the initial time $h=0$. See~\cite{BB}.
\\[.4\baselineskip]
$\ftS{t}{h}$: The $\varepsilon_{\nu}$-wave-front tracking approximation of $\vSB{t}{h}$ by~\cite{AMfr}.
\\[.4\baselineskip]
$\Omega$: Open, bounded, connected subset of $\real^N$ where $u$ is valued.
\\[.4\baselineskip]
%$\dzero$: Smallness parameter for initial datum in the Cauchy problem of the homogeneous system as at Page~\pageref{E:rgabab}. See~\cite{AMfr}.
%\\[.4\baselineskip]
$\dssb$: Smallness parameter for initial datum in the Cauchy problem of the viscous system~\eqref{eq:sysnc} as in Theorem~\ref{T:localConv} \\[.4\baselineskip]
$ \dH$: Smallness parameter for initial datum in the Cauchy problem of the homogeneous system as at Page~\pageref{E:rgabab} which provides a threshold for global existence and convergence of wave-front tracking approximations. See~\cite{AMfr}, where it is denoted by $\delta_0$.
\\[.4\baselineskip]
$\varepsilon$: Positive parameter.
\\[.4\baselineskip]
$\nu$: Positive integer parameter relative to subsequences of $\varepsilon_{\nu}$-wave-front tracking approximations or of $(\varepsilon_{\nu},\tau_{\nu})$-fractional-step approximations. See~\S~\ref{sec:PCA}.
\\[.4\baselineskip]
$\varepsilon_{\nu}$: Positive vanishing constant, as $\nu\uparrow\infty$, in $\varepsilon_{\nu}$-wave-front tracking approximation and in $(\varepsilon_{\nu},\tau_{\nu})$-fractional-step approximation. It correspond also to maximum size of rarefactions.
\\[.4\baselineskip]
$\tau_{\nu}$: Size of the time-step in $(\varepsilon_{\nu},\tau_{\nu})$-fractional-step approximations. See~\S~\ref{sec:PCA}.
\\[.4\baselineskip]
$u_{\nu}$, $u$: Often, $\varepsilon_{\nu}$-wave front tracking approximation of the Cauchy problem~\eqref{eq:sysnc}-\eqref{eq:inda}, either homogeneous or not, and its limit entropy solution, as constructed in~\cite{AMfr} and recalled in \S~\ref{Ss:frontTr}.
\\[.4\baselineskip]
$\ww_{\nu}$, $\ww$: Fractional step approximation, and its limit as $\nu\uparrow\infty$, of the Cauchy problem~\eqref{eq:sysnc}-\eqref{eq:inda} that we construct in~\S\S~\ref{sec:PCA}-\ref{S:convergenceAndExistence}. We fix the right-continuous representative in time and space.
\\[.4\baselineskip]
%$\omega(s)$: the $s$-modulus of uniform continuity of $g(\cdot, x,u)$ for $t$ in a compact set, all $x,u$
%\\[.4\baselineskip]
$\ell_{g}$: The Lipschitz constant of $g(\cdot, x,u)$ in $x,u$. See the assumption (G) at Page~\pageref{Ass:G}.
\\[.4\baselineskip]
$\alpha$: See the assumption (G) at Page~\pageref{alpha}.
\\[.4\baselineskip]
$\Phi$: Functional defined at~\eqref{eq2:RP1}.
\\[.4\baselineskip]
$\Q$, $\mathcal{V}$, $\Upsilon$: Functionals defined
in~\eqref{eq:ip}-\eqref{eq:Ups}.
\\[.4\baselineskip]
$\lesssim$ Less or equal up to a constant which only depends only on the flux of~\eqref{eq:sysnc} and on $\dss$ or $\dH$.
\\[.4\baselineskip]
$\mu_{(\nu)}^{I}$, $\mu_{(\nu)}^{IC}$: Interaction and interaction-cancellation measures, either on a sequence of ($\varepsilon_{\nu}$-$\tau_{\nu}$)-fractional step approximations or a fixed limit of them. See~\eqref{E:muIC}.
\\[.4\baselineskip]
$(\beta,i,k)$-approximate sub-discontinuity as defined in \S~\ref{S:discontinuities}.
\\[.4\baselineskip]
$\mu^{i*}_{(\nu)}, \overline{\mu}_{(\nu)}^{i\pm}$: Real measures detecting possible initial points of $i$-contact discontinuities, \S~\ref{Ss:constructionlimitcurves}.
\\[.4\baselineskip]
$\J_{i}$ $\J_{\beta,i} $ $\J$: $i$-Shock-fronts and $i$-contact discontinuity fronts,e \S~\ref{Ss:constructionlimitcurves}.
\\[.4\baselineskip]
$\Theta$: At most countable set containing interaction points, \S~\ref{Ss:constructionlimitcurves}.
\\[.4\baselineskip]
$\TV(v)$: The total variation of $v:\real\to\real^m$, $m\in\nat$, which is $\sup_{K\in\nat}\sup_{x_{1}<\dots<x_{K}}\sum_{j}|v(x_{j+1})-v(x_{j})|$. 
\\[.4\baselineskip]
\bibliographystyle{plain}

\end{document}